\title{Explicit descent on elliptic curves and splitting Brauer classes}
\author{Benjamin Antieau and Asher Auel}
\date{\today}
\DeclareSymbolFontAlphabet{\mathbb}{AMSb} 
\DeclareSymbolFontAlphabet{\mathbbl}{bbold}
\definecolor{todo}{rgb}{1,0,0}
\definecolor{conditional}{rgb}{0,1,0}
\definecolor{e-mail}{rgb}{0,.40,.80}
\definecolor{reference}{rgb}{.20,.60,.22}
\definecolor{mrnumber}{rgb}{.80,.40,0}
\definecolor{citation}{rgb}{0,.40,.80}
\let\oldmarginpar\marginpar
\renewcommand\marginpar[1]{\-\oldmarginpar[\raggedleft\footnotesize #1]%
{\raggedright\footnotesize #1}}
\newcommand{\Ascr}{\mathcal{A}}
\newcommand{\Gscr}{\mathcal{G}}
\newcommand{\Hscr}{\mathcal{H}}
\newcommand{\Lscr}{\mathcal{L}}
\newcommand{\Mscr}{\mathcal{M}}
\newcommand{\Oscr}{\mathcal{O}}
\newcommand{\B}{\mathrm{B}}
\renewcommand{\d}{\mathrm{d}}
\newcommand{\E}{\mathrm{E}}
\renewcommand{\H}{\mathrm{H}}
\newcommand{\R}{\mathrm{R}}
\newcommand{\bC}{\mathbf{C}}
\newcommand{\bF}{\mathbf{F}}
\newcommand{\bG}{\mathbf{G}}
\newcommand{\bP}{\mathbf{P}}
\newcommand{\bQ}{\mathbf{Q}}
\newcommand{\bR}{\mathbf{R}}
\newcommand{\bZ}{\mathbf{Z}}
\renewcommand{\mathds}{\mathbbl}
\newcommand{\ZZ}{\mathds{Z}}
\newcommand{\pfrak}{\mathfrak{p}}
\newcommand{\xto}{\xrightarrow}
\DeclareMathOperator{\id}{id}
\newcommand{\im}{\mathrm{im}}
\renewcommand{\geq}{\geqslant}
\renewcommand{\leq}{\leqslant}
\DeclareMathOperator{\Pic}{Pic}
\DeclareMathOperator{\Br}{Br}
\newcommand{\Extscr}{\mathscr{E}\!\mathit{xt}}
\newcommand{\Homscr}{\mathscr{H}\!\mathit{om}}
\DeclareMathOperator{\Hom}{Hom}
\newcommand{\SL}{\mathrm{SL}}
\newcommand{\GL}{\mathrm{GL}}
\newcommand{\PGL}{\mathrm{PGL}}
\newcommand{\Gm}{\bG_{m}}
\newcommand{\per}{\mathrm{per}}
\newcommand{\ind}{\mathrm{ind}}
\DeclareMathOperator{\Spec}{Spec}
\newcommand{\we}{\simeq}
\newcommand{\iso}{\cong}
\theoremstyle{plain}
\newtheorem{theorem}{Theorem}[section]
\newtheorem*{theorem*}{Theorem}
\newtheorem{lemma}[theorem]{Lemma}
\newtheorem{proposition}[theorem]{Proposition}
\newtheorem{corollary}[theorem]{Corollary}
\newtheorem*{corollary*}{Corollary}
\newtheorem{namedtheorem}{Theorem}
\newtheorem{namedcorollary}[namedtheorem]{Corollary}
\theoremstyle{definition}
\newtheorem{namedexample}{Example}
\theoremstyle{definition}
\newtheorem{definition}[theorem]{Definition}
\newtheorem{warning}[theorem]{Warning}
\newtheorem{example}[theorem]{Example}
\newtheorem*{example*}{Example}
\newtheorem{question}[theorem]{Question}
\newtheorem*{question*}{Question}
\newtheorem{remark}[theorem]{Remark}
\newtheorem*{remark*}{Remark}
\newcommand{\linedef}{\textbf}
\begin{document}

\maketitle

\begin{abstract}
    \noindent
    We prove new results on splitting Brauer classes by genus $1$
    curves, settling in particular the case of degree $7$ classes over
    global fields.  Though our method is cohomological in nature, and
    proceeds by considering the more difficult problem of splitting
    $\mu_N$-gerbes, we use crucial input from the arithmetic of
    modular curves and explicit $N$-descent on elliptic curves.
%

\end{abstract}

\section{Introduction}

Inspired by work of Artin~\cite{artin:Brauer-Severi} on Severi--Brauer
varieties, Pete Clark~\cite{clark:open} and David
Saltman~\cite{rage:problems} have asked whether, given a Brauer class
$\alpha\in\Br(k)$ over a field $k$, there exists a (smooth proper
geometrically irreducible) genus $1$ curve $C/k$ such that $\alpha$ is
split by $C$.  By `split' we mean that $\alpha$ pulls back to zero in
$\Br(C)$, or equivalently, in $\Br(k(C))$.  We remark that if a curve
of genus $g$ splits $\alpha$, then the index of $\alpha$ must divide
$2g-2$, showing the relevance of the genus $1$ hypothesis. 

Work of Swets~\cite{swets} handles the case when $\alpha$ has index
$\leq 3$, de Jong and Ho~\cite{dejong-ho} settle the case when
$\alpha$ has index $\leq 5$, and the case of index $6$ is indicated in
\cite{auel:banff_talk}.  In a slightly different direction,
Roquette~\cite{roquette}, Lichtenbaum~\cite{lichtenbaum}, Ciperiani
and Krashen~\cite{ciperiani-krashen}, and Krashen and
Lieblich~\cite{krashen-lieblich} give results and algorithms to
compute the kernel of $\Br(k)\rightarrow\Br(C)$ when $C$ is a fixed
genus $1$ curve.  Using these results, one can establish an
affirmative answer to the question when $k$ is a local field: see
Example~\ref{ex:local}.  However, the question is still wide open,
notably over global fields.  One of our main contributions is the
following.

\begin{theorem*}
  Every Brauer class $\alpha$ of index 7 over a global field $k$ is
  split by a genus 1 curve.
\end{theorem*}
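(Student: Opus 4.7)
The plan is to recast the splitting problem cohomologically and solve it via a one-parameter modular family of elliptic curves and explicit $7$-descent. By the Kummer sequence and Hilbert 90, $\Br(k)[7] \cong H^2(k,\mu_7)$, so $\alpha$ lifts canonically to a $\mu_7$-gerbe class $\tilde\alpha \in H^2(k,\mu_7)$; splitting $\tilde\alpha$ is strictly stronger than splitting $\alpha$, but it is the natural refined problem. For any period-$7$ torsor $C$ of an elliptic curve $E/k$, with class $[C] \in H^1(k,E)[7]$, choose a lift $\beta \in H^1(k, E[7])$ via the descent sequence
\[
0 \to E(k)/7 \to H^1(k,E[7]) \to H^1(k,E)[7] \to 0.
\]
The O'Neil/theta-group obstruction $\beta \cup_W \beta \in H^2(k,\mu_7)$, formed using the Weil pairing $E[7] \otimes E[7] \to \mu_7$, is split by $C$. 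The theorem therefore reduces to the \emph{realization problem}: find an elliptic curve $E/k$ and a class $\beta \in H^1(k, E[7])$ with $\beta \cup_W \beta = \tilde\alpha$.

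To parametrize candidate $E$'s, I would work with the modular curve $X_0(7)$, which has genus zero with rational cusps. Over $k$ it provides a one-parameter family of elliptic curves equipped with a Galois-stable cyclic subgroup $T \subset E[7]$ of order $7$. After a suitable twist, one arranges $T \cong \mu_7$ as Galois modules, so by the Weil pairing the $7$-torsion sits in a short exact sequence
\[
0 \to \mu_7 \to E[7] \to \ZZ/7 \to 0.
\]
This filtration makes both $H^1(k, E[7])$ and the quadratic map $\beta \mapsto \beta \cup_W \beta$ amenable to explicit $7$-descent: the cup product decomposes into contributions coming from classes in $H^1(k, \mu_7) \cong k^{*}/(k^{*})^{7}$ and in $H^1(k, \ZZ/7)$, together with a cross term governed by the extension class of the filtration.

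The main obstacle is the global solvability of the quadratic equation $\beta \cup_W \beta = \tilde\alpha$ over $k$. I would handle it in three steps. First, the local result recorded in Example~\ref{ex:local} gives, at every place $v$ of $k$, a local solution $(E_v, \beta_v)$ over $k_v$. Second, weak approximation on $X_0(7) \cong \PP^{1}_{k}$ lets me choose the modular parameter so that $E$ matches the required local invariants simultaneously at the finitely many places where $\tilde\alpha$ ramifies. Third, when $\mu_7 \not\subset k$, carry out the explicit construction of $\beta$ first over $k(\mu_7)$, where $\tilde\alpha$ becomes a cyclic symbol and $\beta$ can be written down by hand, and then descend along $k(\mu_7)/k$; since this extension has degree dividing $6$, coprime to $7$, the Galois-cohomological obstructions to descent are killed. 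The hardest step I expect is ensuring that the quadratic realization $\beta \cup_W \beta = \tilde\alpha$ holds \emph{on the nose} globally, not merely up to a lower-index Brauer class, which is precisely what the explicit $7$-descent machinery must deliver.
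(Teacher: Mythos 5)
Your high-level framing --- reduce to splitting a $\mu_7$-gerbe and use a one-parameter modular family of elliptic curves with explicit $7$-descent --- is on target, but the central step is missing, and the route you choose is harder than the one that actually closes the argument. You reformulate the problem as a \emph{quadratic} realization problem: find $\beta\in\H^1(k,E[7])$ with $Ob(\beta)=\tilde\alpha$ (your $\beta\cup_W\beta$). You correctly flag that solving this equation globally is the hardest step, but you never resolve it, and without a symplectic full level $7$ structure (not merely the filtration $0\to\mu_7\to E[7]\to\bZ/7\to0$ coming from a $\mu_7$-isogeny) the structure results of Propositions~\ref{prop:oneil} and~\ref{prop:oneilerratum} that render $Ob$ computable are unavailable. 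Moreover, your proposed descent along $k(\mu_7)/k$ via ``degree coprime to $7$'' is the standard restriction--corestriction argument for cohomology classes; it does not automatically descend the genus~$1$ curve itself, so even a solution over $k(\mu_7)$ would not immediately yield a $k$-curve splitting $\tilde\alpha$.

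What makes the global $N=7$ case go through is a pair of classical theorems that do not appear in your outline. By Albert--Brauer--Hasse--Noether, every class $\tilde\alpha\in\H^2(\Spec k,\mu_7)$ over a global field is \emph{already} cyclic over $k$, so the passage to $k(\mu_7)$ is unnecessary. The paper then works not with the quadratic obstruction but with the \emph{linear} boundary map of a $\mu_7$-isogeny $\varphi\colon E\to E'$ drawn from the Tate normal form family over $X_1(7)$ (not $X_0(7)$ plus a twist: one needs an honest rational point $P$ of exact order $7$ on $E'$). Lemma~\ref{lem:coboundary} identifies the boundary $\delta\colon\H^1(k,\bZ/7)\to\H^2(k,\mu_7)$ with cup product against $\delta(P)\in\H^1(k,\mu_7)$, and Lemma~\ref{lem:split_delta} shows the torsor $X_\chi$ splits $\delta(\chi)=\delta(P)\cup\chi$. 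Weak approximation in the modular parameter $\lambda$ is then used with a very precise goal: since $\delta(P)=\lambda^6(\lambda-1)^3$ for the $N=7$ family, choosing $v_\pfrak(\lambda)$ a unit mod $7$ at the finitely many places where $\tilde\alpha$ ramifies ensures $k(\delta(P)^{1/7})$ splits $\tilde\alpha$. Finally, Albert's cyclicity theorem for prime degree upgrades ``split by $k(\delta(P)^{1/7})$'' to ``$\tilde\alpha=\chi'\cup\delta(P)$ for some $\chi'$'', so $X_{\chi'}$ splits $\tilde\alpha$. Without Albert--Brauer--Hasse--Noether and Albert's theorem, your weak-approximation step has no mechanism to convert local data into a single global genus~$1$ curve --- which is precisely the gap you acknowledge but leave open.
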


In fact, we prove much more by considering a strengthening of the
question: given a class $\beta\in\H^2(\Spec k,\mu_N)$, is there a
genus $1$ curve $C/k$ such that $\beta$ is split by $C$?  Here, by
`split' we mean that $\beta$ pulls back to zero in $\H^2(C,\mu_N)$,
and to emphasize this, we say that $C$ splits the $\mu_N$-gerbe $\beta$.  Note that this
is not equivalent to $\beta$ pulling back to zero in
$\H^2(\Spec k(C),\mu_N)$. Indeed, if $\beta$ is split by $C$, then the image
$\alpha \in \Br(k)$ under the isomorphism
$\H^2(\Spec k,\mu_N) \iso \Br(k)[N]$, is split by $C$, but the
converse is not generally true. {Unless otherwise specified,
cohomology groups are taken with respect to the fppf topology.}

For example, using the Tate pairing, one can show that if $k$ is a
non-archimedean local field and $\beta\in\H^2(\Spec k,\mu_N)\iso\bZ/N$,
then $\beta$ is split by a genus $1$ curve; see Proposition~\ref{prop:local}.

In our main theorem, we mention cyclic classes, i.e., those in the
image of the cup product map $(\chi,u)\mapsto\chi\cup u$ in cohomology
$\H^1(\Spec k, \bZ/N) \times \H^1(\Spec k, \mu_N) \to \H^2(\Spec k,
\mu_N)$. The theorem of Albert, Brauer, Hasse, and Noether says that if $k$ is
a global field, then every class of $\H^2(\Spec k,\mu_N)$ is cyclic. This fact is
central to the proof of the following.

\begin{namedtheorem}\label{thm:a}
  Let $k$ be a field and $\beta\in\H^2(\Spec k,\mu_N)$. Then the
  $\mu_N$-gerbe $\beta$ is split by a genus $1$ curve in the following
  cases:
    \begin{itemize}
        \item $N=2,3,4,5$ and $\beta$ is cyclic,
        \item $N=6,7,10$ and $k$ is a global field,
        \item $N=8$, $k$ is a global field, and $k$ contains a
            primitive $8$th root of unity,
        \item $N=9$, $k$ is a global field, and $k$ contains
          $\zeta_9+\zeta_9^{-1}$, where $\zeta_9$ is a primitive $9$th
          root of unity, or
        \item $N=12$, $k$ is a global field, and $k$ contains a primitive
            $4$th root of unity.
    \end{itemize}
\end{namedtheorem}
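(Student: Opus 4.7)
The plan has two stages: a cohomological reduction to cyclic classes, and an explicit construction of the splitting curve in that case.

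For the first stage, I would unify the bullets by proving the following principal claim: for each $N$ in the list, and each $k$ satisfying the stated hypothesis, every cyclic class $\beta = \chi \cup u \in \H^2(\Spec k, \mu_N)$ with $\chi \in \H^1(\Spec k, \bZ/N)$ and $u \in \H^1(\Spec k, \mu_N)$ is split by a genus $1$ curve. The $N \in \{2,3,4,5\}$ bullet is then immediate. For the remaining bullets, where $k$ is a global field, one invokes the Albert--Brauer--Hasse--Noether theorem: every element of $\Br(k)[N] \iso \H^2(\Spec k, \mu_N)$ is represented by a cyclic algebra, hence is of the form $\chi \cup u$, reducing those cases to the principal claim. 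Note that the Kummer-theoretic distinction between $\mu_N$-gerbe splitting and Brauer splitting (emphasized in the introduction) does not affect this reduction at the level of $\Spec k$ itself, since $\H^1(\Spec k, \Gm) = 0$ by Hilbert 90.

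For the second stage, my strategy would exploit the arithmetic of the modular curve $X_1(N)$, which is geometrically rational precisely when $N \leq 10$ or $N = 12$ -- exactly the list in the theorem. For such $N$, and under the root-of-unity hypothesis for $N \in \{8, 9, 12\}$ (which is what ensures the relevant twisted parameter space has $k$-rational points), one has a one-parameter family of elliptic curves $E/k$ equipped with a $k$-rational cyclic subgroup $K \subset E[N]$ of order $N$. Given $\beta = \chi \cup u$, the aim is to choose $(E, K)$ together with a point $Q \in E'(k)$, where $E' = E/K$, so that the principal homogeneous space $C$ obtained from the image of $Q$ under the coboundary of the Kummer-type sequence
$$0 \to K \to E \xrightarrow{\phi} E' \to 0$$
has class in $\H^1(\Spec k, E)$ whose image under the natural map to $\H^2(\Spec k, \mu_N)$ equals $\beta$. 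Explicit $N$-descent, as in the title of the paper, is what carries out this construction and identifies the class.

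The main obstacle is the parameter-matching step: realizing an arbitrarily prescribed cyclic $\beta = \chi \cup u$ as the output of such a descent datum $(E, K, Q)$. The rationality of $X_1(N)$ provides a continuous family of $(E, K)$, giving enough freedom to arrange the Galois action on $K$ (or its Cartier dual under the Weil pairing) so as to realize $\chi$, while the choice of $Q \in E'(k)$ controls a Kummer-type parameter realizing $u$; explicit equations for the universal $N$-isogeny over $X_1(N)$ are what make this matching effective, and the special root-of-unity hypotheses for $N \in \{8,9,12\}$ are exactly what is needed for the corresponding parameter space to carry the required rational points. A secondary subtlety, distinguishing the $\mu_N$-gerbe version of the theorem from its Brauer-class shadow, is that the splitting must be verified in $\H^2(C, \mu_N)$ and not merely in $\Br(C)$; this should follow from tracing $\beta$ through the Kummer sequence on $C$, using the structure of $C$ as a period-$N$ principal homogeneous space for $E$.
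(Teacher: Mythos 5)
Your first stage matches the paper: for $N=2,3,4,5$ the cyclicity hypothesis is given, and for the global-field cases Albert--Brauer--Hasse--Noether reduces to cyclic classes. You also correctly identify that the mechanism is $N$-descent via an isogeny whose kernel is $\mu_N$ and that the explicit families behind the $X_1(N)$ parametrization are what produce the torsors. (One technical point: in your setup $E'=E/K$ with $K$ cyclic, the Kummer coboundary of $Q\in E'(k)$ lands in $\H^1(k,K)$, not $\H^2(k,\mu_N)$; to get a class in $\H^2(k,\mu_N)$ one must work with the dual isogeny $\widehat{\varphi}\colon E'\to E$ whose kernel is $\bZ/N$, pushing $\chi\in\H^1(k,\bZ/N)$ to an $E'$-torsor $X_\chi$ and observing that $X_\chi$ splits its own $\delta$-image. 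This is Lemma~\ref{lem:split_delta}, and it also disposes of your ``secondary subtlety'' for free, since the whole construction lives in $\mu_N$-cohomology from the start.)

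The genuine gap is in your ``principal claim'' and what you call the main obstacle. You assert that for each $N$ in the list and each $k$ satisfying the hypotheses, \emph{every} cyclic class is split by a genus $1$ curve, and you treat the parameter-matching step as a technical matter of choosing $(E,K,Q)$ well. That is too strong, and it hides the real issue. The boundary element $\delta(P)\in\H^1(k,\mu_N)\iso k^\times/(k^\times)^N$ is a fixed polynomial function of the rational coordinate $\lambda$ on $X_1(N)$ (see Figure~\ref{fig:flambda}). For $N\leq 5$ it happens to be $\lambda^{N-1}$, a power coprime to $N$, so one can directly set $\lambda=u$ and match. For $N\geq 6$ the expression $\delta(P)(\lambda)$ is a nontrivial product of distinct irreducible factors (e.g.\ $\lambda^5(\lambda-1)^4$ for $N=6$), so there is no way to realize an arbitrary $u\in k^\times/(k^\times)^N$ by choice of $\lambda$; the parameter matching you are hoping for simply does not exist over a general field. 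The paper's resolution is not a cleverer choice of family. Instead, it uses the class field theory description of $\Br(k)$ and weak approximation to choose $\lambda$ so that the extension $k(\delta(P)^{1/N})$ splits the Brauer class, and then invokes the cyclicity theorems of Albert, Vishne, and Min\'a\v{c}--Wadsworth to conclude $\beta=\chi'\cup\delta(P)$ for a \emph{different} character $\chi'$, so $X_{\chi'}$ splits $\beta$. This re-cyclicization step is the heart of the $N\geq 6$ argument, and it is precisely where the global-field hypothesis is used.

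Relatedly, your explanation of the root-of-unity hypotheses for $N=8,9,12$ is not correct. Those hypotheses are not needed for the parameter space to have rational points -- the Tate normal form families in the appendix are defined over $\bZ$ away from the discriminant locus. They are the hypotheses of the Vishne and Min\'a\v{c}--Wadsworth cyclicity theorems used in the re-cyclicization step above.
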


The reader will observe that the $N$ appearing in Theorem~\ref{thm:a}
are precisely those for which the modular curve $X_1(N)$ has genus $0$
(see for example~\cite{ogg}). This is not a coincidence; the proof
uses explicit $1$-parameter families of elliptic curves with exact
order $N$ points constructed using Tate normal form; see the work of
Kubert~\cite{kubert} and the reference by Knapp~\cite[V.5]{knapp} as
well as the tables by Sutherland~\cite{sutherland,sutherland:table}.
{\ttfamily MAGMA}~\cite{magma} code provided by Tom Fisher allows us
to do explicit $\mu_N$-descent on these curves, which we do to
generate lots of classes $\beta$ split by genus $1$ curves. Saltman
led us to the cyclicity theorem of Albert~\cite{albert}, generalized
to non-prime-degree algebras by Vishne~\cite{vishne} and
Min\'{a}\v{c}--Wadsworth~\cite{minac-wadsworth}, which allows us to
prove that in the global cases these classes span the entire group
$\H^2(\Spec k,\mu_N)$ under the assumptions in the theorem.

While our results for splitting $\mu_N$-gerbes, covering small $N$ and
particular fields $k$, might seem limited, there is little chance that
they can be improved in general.
Indeed, over $\bQ$, no $\mu_p$-gerbe 
is split by a genus $1$ curve if $p\geq 11$ is a prime (see
Example~\ref{ex:bigprimes}), although the question of whether classes
in $\Br(\bQ)[p]$ are split by genus 1 curves is still
open. Additionally, we remark that the splitting problem for
$\mu_N$-gerbes is sensitive to $N$: in Example~\ref{ex:mnotn}, we give
an example of an index $2$ Brauer class $\alpha$ split by a genus $1$
curve $X$, where $X$ splits the unique lift of $\alpha$ to
$\H^2(\Spec k,\mu_4)$ but does not split the unique lift to
$\H^2(\Spec k,\mu_2)$.

As an additional illustration of our methods, we also prove that every
class $\beta\in\H^2(\Spec k,\mu_N)$ is split by a torsor for an
abelian variety; see Section~\ref{sec:abelian}. This gives a new proof
of the fact that every Brauer class is split by an abelian variety
torsor, which was first proved by Ho and Lieblich
in~\cite{ho-lieblich}. They prove even more, namely that every Brauer
class is split by a torsor for the Jacobian of a curve, typically of
very high genus.  The curves they employ have simple Jacobians and
they wonder whether one can split Brauer classes by products of genus
one curves. We use our methods to address this question in many cases
in Theorem~\ref{thm:b} and~\ref{thm:c}.

\begin{namedtheorem}\label{thm:b}
  Let $k$ be a field and let $N\geq 2$ be an integer invertible in
  $k$. If $\beta\in\H^2(\Spec k,\mu_N)$, then the $\mu_N$-gerbe
  $\beta$ is split by a product of genus $1$ curves in the following
  cases:
    \begin{itemize}
        \item   $N=2,3,5,6,10,15,30$ or
        \item   $N=4,12,20,60$ if $k$ contains a primitive $4$th root of unity.
    \end{itemize}
\end{namedtheorem}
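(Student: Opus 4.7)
The plan is to combine a Chinese remainder decomposition of $\mu_N$ (reducing to prime powers), classical generation results showing $\H^2(\Spec k,\mu_N)$ is spanned by cyclic classes for the $N$ in question, Theorem~\ref{thm:a} to split each cyclic summand by a genus $1$ curve, and an elementary product argument to assemble these into a single splitting variety.

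Since each $N$ in the list is a product of prime powers drawn from $\{2,3,5\}$, together with $4$ in the second family of cases, the Chinese remainder isomorphism $\mu_N\iso\prod_{p\mid N}\mu_{p^{v_p(N)}}$ of fppf sheaves produces a direct-sum decomposition $\H^2(\Spec k,\mu_N)\iso\bigoplus_p\H^2(\Spec k,\mu_{p^{v_p(N)}})$ under which $\beta=(\beta_p)_p$. A $k$-variety $X$ splits $\beta$ if and only if it splits each component $\beta_p$, so I may assume $N\in\{2,3,5\}$, or $N=4$ with $\mu_4\subset k$. For each such prime-power $N$, every class of $\H^2(\Spec k,\mu_N)\iso\Br(k)[N]$ is a sum of cyclic classes $\chi\cup u$ with $\chi\in\H^1(\Spec k,\bZ/N)$ and $u\in\H^1(\Spec k,\mu_N)$: for $N=2$ this is Merkurjev's theorem that $\Br(k)[2]$ is generated by quaternion algebras; for $N=4$ with $\mu_4\subset k$ it is Merkurjev--Suslin directly; and for $N=3,5$ it follows from Merkurjev--Suslin over $k(\mu_N)$ combined with a restriction--corestriction argument (valid because $[k(\mu_N):k]$ divides $N-1$ and is hence coprime to $N$) and the Rosset--Tate norm formula, which keeps corestrictions of symbols cyclic over $k$.

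Writing $\beta=\sum_{i=1}^r\beta_i$ with each $\beta_i$ cyclic, Theorem~\ref{thm:a} produces for each $i$ a genus $1$ curve $C_i/k$ with $\beta_i|_{C_i}=0$ in $\H^2(C_i,\mu_N)$. Set $X=C_1\times_k\cdots\times_k C_r$, a product of genus $1$ curves over $k$. Because the structure map $X\to\Spec k$ factors as $X\xto{\pi_i}C_i\to\Spec k$ for each projection $\pi_i$, pullback gives $\beta_i|_X=\pi_i^*(\beta_i|_{C_i})=0$ in $\H^2(X,\mu_N)$, and summing over $i$ yields $\beta|_X=\sum_i\beta_i|_X=0$. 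Hence $X$ splits the $\mu_N$-gerbe $\beta$.

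The main technical obstacle lies in the generation step for $N=3,5$ when $\mu_N\not\subset k$, which requires descending cyclic classes from $k(\mu_N)$ to $k$ itself rather than merely to $\cor$ of cyclic classes; this is furnished by the Rosset--Tate norm formula. The hypothesis $\mu_4\subset k$ in the second family of cases is imposed precisely because the analogous descent fails at the prime $2$: the degree $[k(\mu_4):k]$ need not be coprime to $4$, and no comparable degree-$4$ cyclic generation theorem is available without $\mu_4\subset k$. All other ingredients (CRT, the product argument, Theorem~\ref{thm:a}) are formal or already established.
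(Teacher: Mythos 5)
Your overall strategy — Chinese remainder decomposition to reduce to prime powers $N\in\{2,3,4,5\}$, expression of classes as sums of cyclic classes, application of Theorem~\ref{thm:a} to each summand, and an elementary product-of-curves argument — matches the paper's. The CRT reduction, the product argument, and the cases $N=2$ (Merkurjev) and $N=4$ with $\mu_4\subset k$ (Merkurjev--Suslin) are correct. The gap is in your claimed deduction of cyclic generation for $N=3$ and $N=5$ when $\mu_N\not\subset k$ via restriction--corestriction to $k(\mu_N)$ together with the Rosset--Tate norm formula.

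Rosset--Tate is a statement about Milnor $K$-theory: the transfer of a symbol $\{a,b\}\in K_2(L)/N$ down to $K_2(k)/N$ is a sum of at most $[L:k]$ symbols. Via the Merkurjev--Suslin isomorphism $K_2(k)/N\iso\H^2(\Spec k,\mu_N^{\otimes 2})$ this yields generation of $\H^2(\Spec k,\mu_N^{\otimes 2})$ by classes of the form $(a)\cup(b)$ with $(a),(b)\in\H^1(\Spec k,\mu_N)$. But to invoke Theorem~\ref{thm:a} you need generation of $\H^2(\Spec k,\mu_N)$ by cyclic classes $\chi\cup u$ with $\chi\in\H^1(\Spec k,\bZ/N)$ and $u\in\H^1(\Spec k,\mu_N)$. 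When $\mu_N\not\subset k$ the Galois modules $\mu_N$ and $\mu_N^{\otimes 2}$ are nonisomorphic (they differ by a nontrivial cyclotomic twist) and $K_2$-symbols over $k$ do not correspond to cyclic $\mu_N$-cohomology classes. Equivalently, the corestriction to $k$ of a degree-$N$ cyclic algebra over $L=k(\mu_N)$ is a priori only an exponent-$N$ algebra of index dividing $N^{[L:k]}$, and there is no elementary reason for it to be a tensor product of degree-$N$ cyclic algebras over $k$ itself. If the transfer argument you propose were valid, it would prove cyclic generation of $\Br(k)[p]$ for every prime $p$, a famously open problem already at $p=7$ — which is exactly why $N=7$ and its multiples are absent from Theorem~\ref{thm:b}, in contrast to Theorem~\ref{thm:a} where global-field hypotheses provide cyclicity directly. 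The paper instead invokes Merkurjev's theorem \cite{merkurjev-norm,merkurjev-fields} for $N=3$ and Matzri's theorem \cite{matzri} for $N=5$; these are separate, substantial results that do not follow from Merkurjev--Suslin by the restriction--corestriction argument you propose.
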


Clark first suggested in unpublished work~\cite{clark:open} a close
connection between the splitting problem and the period-index
obstruction map for genus $1$ curves studied by O'Neil~\cite{oneil}
and Clark~\cite{clark-wc1}. We use those ideas to prove the following
theorem, clarifying along the way the relationship between the
period-index map and the cup product. The theorem applies to cyclic
Brauer classes, i.e., the Brauer classes associated to cyclic
$\mu_N$-cohomology classes.

\begin{namedtheorem}\label{thm:c}
  Let $k$ be a field and fix an elliptic curve $E$ over $k$ with a
  full level $N$ structure $E[N]\iso\bZ/N\times\mu_N$. If
  $\alpha\in\Br(k)[N]$ is cyclic, then $\alpha$ is split by an
  $E$-torsor.
\end{namedtheorem}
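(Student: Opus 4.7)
The strategy is to apply the period-index obstruction theory of O'Neil~\cite{oneil} and Clark~\cite{clark-wc1}: construct an explicit $E$-torsor $X_\xi$ whose obstruction equals $\alpha$, and then invoke the fact that any such torsor splits its own obstruction. Since $\alpha$ is cyclic, write $\alpha = \chi \cup u$ with $\chi\in\H^1(\Spec k,\bZ/N)$ and $u\in\H^1(\Spec k,\mu_N)$. The full level $N$ structure supplies a splitting $\H^1(\Spec k,E[N]) \iso \H^1(\Spec k,\bZ/N) \oplus \H^1(\Spec k,\mu_N)$, along which we form the class $\xi = (\chi,u)$. Pushing $\xi$ through the Kummer sequence $0 \to E[N] \to E \xto{N} E \to 0$ produces an $E$-torsor $X_\xi\in\H^1(\Spec k,E)[N]$, our candidate for the splitting curve.

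The crux is to identify the period-index obstruction $\mathrm{Ob}(\xi)\in\H^2(\Spec k,\mu_N)$ with $\alpha$. By construction, $\mathrm{Ob}(\xi)$ is the image of $\xi$ under the connecting map of the Heisenberg (theta-group) central extension
$$1 \to \mu_N \to \Hscr(E[N]) \to E[N] \to 1$$
attached to the line bundle $\Oscr_E(N\cdot O)$. Under the chosen level $N$ structure, this extension becomes the standard Heisenberg extension of $\bZ/N \oplus \mu_N$ by $\mu_N$ determined by the Weil pairing. A direct cocycle calculation --- lifting $\xi_\sigma = (\chi_\sigma,u_\sigma)$ into $\Hscr$ and measuring the failure of multiplicativity of the lift --- yields the 2-cocycle $(\sigma,\tau) \mapsto ({}^\sigma u_\tau)^{\chi_\sigma}$, which is precisely a representative of $\chi \cup u$ under the tautological pairing $\bZ/N \otimes \mu_N \to \mu_N$. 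Thus $\mathrm{Ob}(\xi) = \chi \cup u = \alpha$, possibly after adjusting the decomposition of $\alpha$ by an automorphism of $\bZ/N$ or $\mu_N$ to reconcile the level $N$ structure with the standard symplectic form.

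The final input is the assertion that $\mathrm{Ob}(\xi)$ is split by $X_\xi$, which is the content of O'Neil's theorem. Geometrically: over $X_\xi$ the torsor trivializes canonically via the diagonal $X_\xi \hookrightarrow X_\xi \times_k X_\xi$, so the twisted form of $\Oscr_E(N\cdot O)$ encoded by $\xi$ --- whose existence as an honest $k$-line bundle on $X_\xi$ is obstructed precisely by the $\mu_N$-gerbe $\mathrm{Ob}(\xi)$ --- descends to a genuine line bundle after base change to $X_\xi$. Equivalently, $X_\xi$ admits a morphism to the Severi--Brauer variety of $\mathrm{Ob}(\xi)$, extending the degree-$N$ embedding $E \hookrightarrow \PP^{N-1}$ furnished by $|N\cdot O|$ over $\bar k$. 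Hence $\alpha$ vanishes in $\H^2(X_\xi,\mu_N)$, and in particular in $\Br(X_\xi)$. The main obstacle in this plan is the Heisenberg cocycle identification in the second paragraph: it demands careful accounting of sign conventions and of the compatibility between the chosen level $N$ structure and the Weil pairing on $E[N]$.
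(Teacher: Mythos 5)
Your overall strategy is the paper's: build an $E[N]$-torsor from the level structure and the symbol $(\chi,u)$, compute its period--index obstruction via the theta group, and use the fact that the associated $E$-torsor splits its own obstruction (Lemma~\ref{lem:pio} in the paper). But there are two gaps.

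First, the claim $Ob(\xi) = \chi\cup u$ for the line bundle $\Oscr(N\cdot 0_E)$ is false when $N$ is even --- this is precisely the error in O'Neil's original paper that a referee caught. Pulling back the theta group along a symplectic level structure yields a central extension with the correct commutator pairing, but not necessarily the standard Heisenberg extension of $\bZ/N\times\mu_N$ by $\Gm$ over $k$. By Proposition~\ref{prop:oneil}, for any symmetric $\Lscr$ of degree $N$ one has
$$Ob_\Lscr(Y) = [\chi,u] + [\chi,v] + [\sigma,u]$$
for $Y$ corresponding to $(\chi,u)$, where the classes $v$ and $\sigma$ depend on $E$, $N$, $\varphi$, $\Lscr$ and only vanish automatically when $N$ is odd. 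Your proposed ``adjustment by an automorphism of $\bZ/N$ or $\mu_N$'' cannot repair this: the correction needed is an additive shift of $\chi$, not a rescaling, and $v,\sigma$ are genuinely non-zero in general for $\Oscr(N\cdot 0_E)$. The paper's fix (Proposition~\ref{prop:oneilerratum}) is to take instead $\Lscr = \Oscr(0_E + P + \cdots + (N-1)P)$ with $P = \varphi^{-1}(1,1)$: translation by $P$ preserves this divisor, so the projective action of $P$ lifts to a cyclic permutation of $p_*\Lscr$, which kills $v$; the residual correction $Ob_\Lscr(Y) = [\chi+\sigma,u]$ is then absorbed by feeding in the torsor corresponding to $(\chi - \sigma, u)$ instead of $(\chi, u)$.

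Second, your conclusion ``Hence $\alpha$ vanishes in $\H^2(X_\xi,\mu_N)$'' overstates what the Severi--Brauer argument gives: Lemma~\ref{lem:pio} splits $\alpha$ in $\Br(X_\xi)$, not the $\mu_N$-gerbe. The Warning after Proposition~\ref{prop:oneilerratum} makes exactly this point, and Example~\ref{ex:bigprimes} shows splitting can genuinely fail at the level of $\mu_N$-cohomology. Since Theorem~\ref{thm:c} only asserts splitting in $\Br$, this is harmless for the theorem, but the stronger sentence should be deleted.
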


Saltman~\cite[Corollary~2]{saltman:cubic} has given another proof of
the $N=3$ case of Theorem~\ref{thm:c}, and also gives examples to show
that there are obstructions to splitting cubic classes by genus
$1$ curves with Jacobians of given $j$-invariant.

It is always possible to arrange for the existence of an elliptic
curve with a full level $N$ structure as above after a small Galois
extension of $k$. So, we obtain our main corollary, which says that,
after some small extensions, we can split arbitrary Brauer classes by
products of genus $1$ curves while controlling the $j$-invariants of
their Jacobians. Note that in the contexts of Theorems~\ref{thm:a}
and~\ref{thm:b}, there is no control of the $j$-invariants of the
Jacobians.

\begin{namedcorollary}\label{cor:d}
  Let $k$ be a field and let $N>1$ be an integer. Fix an elliptic
  curve $E$ over $k$, which is non-supersingular if the characteristic
  of $k$ divides $N$. There is a finite extension $K$ over $k$ of
  degree dividing the order of $\GL_2(\bZ/N)$ such that every
  $\alpha\in\Br(K)[N]$ is split by a product of $E$-torsors.
\end{namedcorollary}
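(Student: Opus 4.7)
The plan is to reduce the problem to Theorem~\ref{thm:c} via a three-step strategy. First, I would produce a finite extension $K/k$ of degree dividing $|\GL_2(\bZ/N)|$ over which $E$ acquires a full level $N$ structure $E_K[N] \iso \bZ/N \times \mu_N$. When $N$ is coprime to $p := \mathrm{char}(k)$, the mod-$N$ Galois representation $\rho_N \colon \Gal(\bar k/k) \to \Aut(E[N](\bar k)) \iso \GL_2(\bZ/N)$ has image of order dividing $|\GL_2(\bZ/N)|$, and one takes $K$ to be the fixed field of $\ker \rho_N$. Non-degeneracy of the Weil pairing forces $\mu_N \subset K^\times$, and a symplectic basis for $E_K[N]$ then realizes the desired isomorphism. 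When $p \mid N$, the non-supersingular hypothesis implies that the $p^a$-torsion fits into a split connected--\'etale sequence $0 \to \mu_{p^a} \to E[p^a] \to \bZ/p^a \to 0$ on which the Galois action factors through $\Aut(\mu_{p^a}) \times \Aut(\bZ/p^a) \iso (\bZ/p^a)^\times \times (\bZ/p^a)^\times$, a group of order $\varphi(p^a)^2$ still dividing $|\GL_2(\bZ/p^a)|$.

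Second, I would express each $\alpha \in \Br(K)[N]$ as a finite sum $\alpha = \sum_i \alpha_i$ of cyclic classes. For the prime-to-$p$ part, the Merkurjev--Suslin theorem applies since $K$ contains the relevant roots of unity by Step~1, realizing every such class as a sum of symbol classes---these are cyclic in the sense of Theorem~\ref{thm:c}. For the $p$-primary part in characteristic $p$, classical Albert--Teichm\"uller theory gives that $\Br(K)\{p\}$ is generated by cyclic $p$-algebras. Applying Theorem~\ref{thm:c} to each $\alpha_i$ produces an $E_K$-torsor $X_i$ splitting $\alpha_i$, and setting $X = \prod_i X_i$, the $K$-morphism $X \to X_i$ induces a pullback $\Br(X_i) \to \Br(X)$ sending the zero class to zero; hence each $\alpha_i$ dies in $\Br(X)$ and so does their sum $\alpha$, splitting $\alpha$ by the desired product of $E$-torsors.

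The main obstacle is the degree bookkeeping in Step~1 when $\mathrm{char}(k) \mid N$: there $\Aut(E[p^a])$ is not literally $\GL_2(\bZ/p^a)$, and one must verify that ordinarity of $E$ suffices to produce the full level structure over an extension of degree bounded by $|\GL_2(\bZ/N)|$. Once this is settled, the cyclic decomposition via Merkurjev--Suslin and Albert--Teichm\"uller combined with Theorem~\ref{thm:c} completes the argument uniformly in the characteristic.
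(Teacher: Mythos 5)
Your proposal follows exactly the paper's strategy: take $K$ to be the field obtained by adjoining the $N$-torsion coordinates of $E$ (equivalently the fixed field of $\ker\rho_N$), decompose $\alpha\in\Br(K)[N]$ into a sum of cyclic classes via Merkurjev--Suslin (prime-to-$p$ part, using $\mu_N\subset K$) and Teichm\"uller ($p$-primary part), and apply Theorem~\ref{thm:c} to split each summand by an $E_K$-torsor, then take the product. The paper itself treats Corollary~\ref{cor:d} as an immediate consequence and only sketches this, writing ``with suitable modifications if the characteristic divides $N$,'' so you are at the same level of rigor as the source.

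One inaccuracy worth flagging in your characteristic-$p$ bookkeeping: the automorphism group scheme of $\mu_{p^a}\times\bZ/p^a$ is not $(\bZ/p^a)^\times\times(\bZ/p^a)^\times$; since $\Homscr(\bZ/p^a,\mu_{p^a})\iso\mu_{p^a}$ is nonzero (while $\Homscr(\mu_{p^a},\bZ/p^a)=0$), the automorphisms form a lower-triangular group of order $\varphi(p^a)^2\cdot p^a$, not $\varphi(p^a)^2$. Moreover, the connected--\'etale sequence of $E[p^a]$ need not split over an imperfect $k$, so the asserted factorization of the Galois action through the diagonal is not automatic; one must first trivialize both the connected and \'etale forms (each over an extension of degree dividing $\varphi(p^a)$) and then kill the extension class. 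The corrected bound $\varphi(p^a)^2 p^a = p^{3a-2}(p-1)^2$ still divides $|\GL_2(\bZ/p^a)| = p^{4a-3}(p-1)^2(p+1)$, so the degree claim survives; you rightly identify this as the place requiring verification, and these are precisely the ``suitable modifications'' the paper leaves implicit.
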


The extension $K$ is obtained by adjoining the coordinates of the
$N$-torsion points of $E$, with suitable modifications if the
characteristic divides $N$.  The number of torsors needed to split
$\beta$ or $\alpha$ in Theorems~\ref{thm:b} and Corollary~\ref{cor:d}
is dictated by the symbol lengths of $\beta$ or $\alpha$ when
expressed as sums of cyclic classes using the theorems of
Teichm\"uller (when $k$ has characteristic $p$ and $N$ is a $p$-power,
see~\cite[Thm.~9.1.4]{gille-szamuely}),
Merkurjev~\cite{merkurjev-norm,merkurjev-fields} ($N=2,3$),
Matzri~\cite{matzri} ($N=5$), and
Merkurjev--Suslin~\cite{merkurjev-suslin} (when $N$ is invertible in
$k$, noting that the extension $K$ in Corollary~\ref{cor:d} will
contain a primitive $N$th root of unity).

We mention a series of examples where Theorem~\ref{thm:c} and
Corollary~\ref{cor:d} can be applied.

\begin{namedexample}
  In the setting of Hilbert's 12th problem (Kronecker's Jugendtraum),
  suppose that $k$ is an imaginary quadratic number field $k$ with
  class number $1$ and $E$ is an elliptic curve over $k$ with complex
  multiplication in $k$. If
  $K=k^\mathrm{ab}=k(E_{\mathrm{tors}}(\overline{k}))$ is the maximal
  abelian extension (see~\cite[Chap.~II]{silverman-advanced}) obtained
  by adjoining the coordinates of the torsion points of $E$, then
  every cyclic algebra over any field $L$ containing $K$ is split by
  an $E$-torsor and every Brauer class over any such field $L$ is
  split by a product of $E$-torsors.
\end{namedexample}

\begin{namedexample}
  If $E$ is any elliptic curve defined over $\overline{\bQ}$, then
  every cyclic algebra over any field $L$ containing $\overline{\bQ}$
  is split by an $E$-torsor and every Brauer class over any such field
  $L$ is split by a product of $E$-torsors. This gives examples of the
  phenomenon with arbitrary algebraic $j$-invariant.
\end{namedexample}

\begin{namedexample}
  In characteristic $p$, no extension is necessary when $N=p$.  By a
  theorem of Deuring~\cite{deuring}, also called the
  Hasse--Deuring--Waterhouse theorem (see~\cite{waterhouse}), there is
  an elliptic curve $E$ over $\bF_p$ with exactly $p$ rational points.
  In particular, it is ordinary and admits a full level $p$ structure.
  This fact can also be seen by the more general Honda--Tate
  theory~\cite{tate-injective,honda,tate-surjective}; the elliptic
  curve corresponds to the Weil polynomial $T^2-T+p$ and is unique up
  to isogeny. Therefore, for any field $k$ of characteristic $p$,
  every cyclic class of $\Br(k)[p]$ is split by an $E$-torsor, and by
  the above mentioned theorem of Teichm\"uller
  (see~\cite[Thm.~9.1.4]{gille-szamuely}), every class of $\Br(k)[p]$
  is a sum of cyclic algebras and hence is split by a
  product of $E$-torsors.
\end{namedexample}

\begin{namedexample}
  If $E$ is any ordinary elliptic curve defined over
  $\overline{\bF}_p$, then $E$ admits a full level $N$ structure for
  any $N$.  Therefore, for any field $L$ containing
  $\overline{\bF}_p$, every cyclic class in $\Br(L)$ is split by an
  $E$-torsor.  However, by a theorem of Albert
  (see~\cite[Thm.~9.1.8]{gille-szamuely}), every class in
  $\Br(k)[p^\infty]$ is cyclic.  This proves that any $p$-primary
  Brauer class over a field containing $\overline{\bF}_p$ is split by
  a genus $1$ curve.
\end{namedexample}

Finally, we provide two additional applications of our methods to
splitting Galois cohomology classes of higher degree by products of
genus one curves.

\begin{namedcorollary}\label{cor:e}
  Let $k$ be a field, let $N \mid 60$ and assume that $k$ contains a
  primitive $N$th root of unity. For any degree $n \geq 2$, any
  $\beta\in\H^n(\Spec k,\mu_N)$ is split by a product of
  genus $1$ curves. The same result holds for $N$-torsion classes of
  $\H^n(\Spec k,\Gm)$.
\end{namedcorollary}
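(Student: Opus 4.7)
The plan is to reduce Corollary~\ref{cor:e} to Theorem~\ref{thm:b} via a symbol decomposition, grouping the first two factors of each symbol into a $\mu_N$-gerbe to which Theorem~\ref{thm:b} can be applied.

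First I would note that, since $k$ contains a primitive $N$th root of unity $\zeta_N$ and $N\mid 60$, the integer $N$ is invertible in $k$, so fppf and \'etale cohomology with $\mu_N$ coefficients agree, and $\zeta_N$ induces canonical isomorphisms $\mu_N^{\otimes i}\cong\mu_N$ of Galois modules for every $i\geq 1$. In particular $\H^n(\Spec k,\mu_N)\cong \H^n(\Spec k,\mu_N^{\otimes n})$. By the norm residue isomorphism theorem of Voevodsky and Rost (the Bloch--Kato conjecture; for $n=2$ the older Merkurjev--Suslin theorem suffices), this group is generated by cup products of classes in $\H^1(\Spec k,\mu_N)$, so one can write
\[
\beta \;=\; \sum_j \chi_1^{(j)}\cup\chi_2^{(j)}\cup\cdots\cup\chi_n^{(j)}, \qquad \chi_i^{(j)}\in \H^1(\Spec k,\mu_N),
\]
as a finite sum of symbols.

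Next I would set $\gamma_j:=\chi_1^{(j)}\cup\chi_2^{(j)}\in \H^2(\Spec k,\mu_N)$ and apply Theorem~\ref{thm:b}, whose hypotheses hold because $N\mid 60$ and $\mu_4\subset\mu_N\subset k$ whenever $4\mid N$. This produces, for each $j$, a product $X_j$ of genus $1$ curves over $k$ with $\gamma_j|_{X_j}=0$. Setting $X:=\prod_j X_j$, which is again a product of genus $1$ curves, naturality of pullback together with functoriality of the cup product gives
\[
(\chi_1^{(j)}\cup\cdots\cup\chi_n^{(j)})|_X \;=\; \gamma_j|_X\cup\chi_3^{(j)}|_X\cup\cdots\cup\chi_n^{(j)}|_X \;=\; 0
\]
for every $j$, hence $\beta|_X=0$. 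The $\Gm$-torsion statement follows at once: the Kummer sequence $1\to\mu_N\to\Gm\xrightarrow{N}\Gm\to 1$ yields a surjection $\H^n(\Spec k,\mu_N)\twoheadrightarrow \H^n(\Spec k,\Gm)[N]$, so a splitting of any lift of an $N$-torsion $\Gm$-class forces a splitting of the original class by naturality.

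The only deep input is the norm residue theorem; the remainder is a formal manipulation of cup products and pullbacks. Since the substantive work has already been packaged in Theorem~\ref{thm:b}, there is no real obstacle in the extension to higher degrees beyond checking hypotheses.
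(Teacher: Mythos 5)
Your proposal is correct and follows essentially the same route as the paper: invoke the norm residue isomorphism theorem to write $\beta$ as a sum of symbols, apply Theorem~\ref{thm:b} to the degree-$2$ part $\chi_1^{(j)}\cup\chi_2^{(j)}$ of each symbol, and kill the whole symbol by functoriality of pullback and cup product. You additionally spell out the Kummer sequence surjection $\H^n(\Spec k,\mu_N)\twoheadrightarrow\H^n(\Spec k,\Gm)[N]$ for the $\Gm$-torsion statement, which the paper's proof leaves implicit.
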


\begin{proof}
  By the Bloch--Kato conjecture, we can write
  $\beta=\beta_1+\cdots+\beta_d$ as a sum of $d$ symbols
  $\beta_i=u_{i1}\cup\cdots\cup u_{in}$, where each
  $u_{ij}\in k^\times/(k^\times)^N$, and where we use a primitive
  $N$th root of unity to give an isomorphism $\bZ/N\iso\mu_N$. Each
  $u_{i1}\cup u_{i2}\in\H^2(\Spec k,\mu_N)$ can be split by a product
  of genus $1$ curves by Theorem~\ref{thm:b}. But, this implies that
  each $\beta_i$ is split by a product of genus $1$ curves and hence
  so is $\beta$.
\end{proof}

Splitting higher degree Galois cohomology classes, combined with the
Milnor conjecture for the Witt group as proved by
Voevodsky~\cite{voevodsky:Milnor_conjecture_I},
\cite{voevodsky:Milnor_conjecture_II} and
Orlov--Vishik--Voevodsky~\cite{orlov_vishik_voevodsky}, we arrive
at the following application to the study of Witt kernels for torsors
under abelian varieties.

\begin{namedcorollary}\label{cor:f}
  Let $\sigma$ be a quadratic form of even dimension $d \geq 4$ and trivial
  discriminant over a field $k$ of characteristic $\neq 2$. Then,
  $\sigma$ becomes hyperbolic after extension to the function field of
  a product of genus $1$ curves.
\end{namedcorollary}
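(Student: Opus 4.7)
The plan is to decompose $\sigma$ in the Witt ring $\W(k)$ into a finite sum of Pfister forms, each of which is hyperbolized by a product of genus $1$ curves via Theorem~\ref{thm:b} applied with $N = 2$. First, since $\sigma$ has even dimension and trivial discriminant, $\sigma$ lies in the square $\I^2(k)$ of the fundamental ideal in $\W(k)$. By the Milnor conjecture on the Witt group (Voevodsky, Orlov--Vishik--Voevodsky), each graded piece $\I^n/\I^{n+1}$ is identified with $\H^n(\Spec k, \mu_2)$ by sending the class of an $n$-fold Pfister form $\langle\langle a_1,\ldots,a_n\rangle\rangle$ to the symbol $\{a_1\}\cup\cdots\cup\{a_n\}$. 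Combined with the fact that $\I^2$ is generated as an ideal in $\W(k)$ by $2$-fold Pfister forms and the identity $\langle c\rangle\cdot\pi = \pi - \langle\langle c\rangle\rangle\cdot\pi$ that rewrites a $1$-dimensional scalar multiple of a Pfister as a difference of Pfister forms of adjacent degrees, this allows me to express
\[
  \sigma = \sum_{i=1}^{r} m_i\, \pi_i \qquad \text{in } \W(k),
\]
where each $\pi_i = \langle\langle a_{i,1},\ldots,a_{i,n_i}\rangle\rangle$ is a Pfister form of some degree $n_i \geq 2$ and $m_i\in\mathbb{Z}$.

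Second, for each $\pi_i$ in this decomposition I would split the associated quaternion Brauer class $\{a_{i,1}\}\cup\{a_{i,2}\} \in \H^2(\Spec k, \mu_2)\cong\Br(k)[2]$ by a product $X_i$ of genus $1$ curves over $k$, which is the content of Theorem~\ref{thm:b} with $N=2$. Writing $\pi_i = \langle\langle a_{i,1},a_{i,2}\rangle\rangle \otimes \langle\langle a_{i,3},\ldots,a_{i,n_i}\rangle\rangle$, where the left factor is the norm form of the quaternion algebra $(a_{i,1},a_{i,2})$, the splitting of that algebra over $k(X_i)$ makes $\langle\langle a_{i,1},a_{i,2}\rangle\rangle$ hyperbolic, and hence the full $\pi_i$ hyperbolic, over $k(X_i)$.

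Finally, the product $X = X_1\times_k\cdots\times_k X_r$ is itself a product of genus $1$ curves over $k$, and over $k(X)$ every $\pi_i$ becomes hyperbolic, so $\sigma = \sum m_i\pi_i$ has trivial Witt class in $\W(k(X))$ and is thus hyperbolic over $k(X)$. The main obstacle is the initial Pfister decomposition of $\sigma$; an alternative, more cohomological route would iteratively split the invariants $e_n(\sigma)\in \H^n(\Spec k, \mu_2)$ via Corollary~\ref{cor:e}, terminating by the Arason--Pfister Hauptsatz (nonzero anisotropic forms in $\I^n$ have dimension $\geq 2^n$), but that strategy produces only an iterated function field extension rather than the function field of a single product of genus $1$ curves over $k$.
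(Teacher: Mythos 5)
Your proof is correct and, if anything, slightly cleaner than the paper's. The paper's argument is close to the ``cohomological route'' you mention: it iteratively decomposes $\sigma$ over $k$ as a $\bZ$-linear combination of Pfister forms $\pi_i$ of degrees $n_i\geq 2$ (writing each successive $e_n$-invariant as a sum of symbols, subtracting off the corresponding Pfister forms, and terminating by the Arason--Pfister Hauptsatz), and then invokes Corollary~\ref{cor:e} to split the resulting finite collection of symbols $e_{n_i}(\pi_i)\in\H^{n_i}(\Spec k,\mu_2)$ all at once over a single product of genus $1$ curves. Your concern that this ``produces only an iterated function field extension'' is a misreading: the iteration in the paper is an algebraic decomposition over the base field $k$, not a tower of scalar extensions, so it does yield a single $k$-variety. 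Where your argument genuinely differs is that, instead of splitting the full degree-$n_i$ symbol attached to each $\pi_i$, you factor $\pi_i = \langle\langle a_{i,1},a_{i,2}\rangle\rangle\otimes(\cdots)$ and split only the quaternion class $\{a_{i,1}\}\cup\{a_{i,2}\}$, using the classical fact that a $2$-fold Pfister form is hyperbolic if and only if the quaternion algebra it is the norm form of is split, and that a hyperbolic factor renders the whole tensor product hyperbolic. This lets you rely on Theorem~\ref{thm:b} (at $N=2$) alone rather than on Corollary~\ref{cor:e}, and it sidesteps the injectivity of the higher invariants $e_n$ on $I^n/I^{n+1}$, which the paper needs in order to pass from vanishing of $e_{n_i}(\pi_i)$ over the extension to hyperbolicity of $\pi_i$; your citation of the Milnor conjecture in the middle of the write-up is actually superfluous. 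One minor cleanup: $I^2$ is already \emph{additively} generated by $2$-fold Pfister forms (see, e.g., \cite[Section~4]{elman_karpenko_merkurjev}), so the detour through ``generated as an ideal'' and the rescaling identity $\langle c\rangle\pi=\pi-\langle\langle c\rangle\rangle\pi$ is unnecessary.
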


\begin{proof}
  A consequence of the Arason--Pfister Hauptsatz and the Milnor
  conjectures for the Witt group is the statement that a quadratic
  form $\sigma$ is hyperbolic if and only if $e_n(\sigma)=0$ for all
  $n \geq 0$, where $e_n : I^n(k) \to \H^n(k,\mu_2)$ are the higher
  cohomological invariants on the fundamental filtration of the Witt
  group, see \cite[Sections~16,~23.A]{elman_karpenko_merkurjev}.  By
  successively splitting $e_n(\sigma)$ and then subtracting off a sum
  of Pfister forms representing $e_n(\sigma)$, it follows that
  $\sigma$ will become hyperbolic over any field extension that splits
  a certain finite collection of mod 2 Galois cohomology classes.  The
  hypotheses on $\sigma$ imply that $e_0(\sigma)=e_1(\sigma)=0$, and
  then Corollary~\ref{cor:e} applies to show that we can split any
  finite collection of Galois cohomology classes of degree~$\geq 2$ by
  a product of genus $1$ curves.
\end{proof}

We remark that the conditions on the dimension and discriminant are
necessary.  Whether a quadratic form in $I^2(k)$ becomes hyperbolic
over the function field of a genus $1$ curve is an open question.

\paragraph{Outline.} Section~\ref{sec:cohomology} mostly contains
background on the cohomological pairings that will be important for
our work, emphasizing their stack-theoretic development and
interpretation.  Readers very well-acquainted with the Weil and Tate
pairings could reasonably skip Sections~\ref{subsec:Tate}
and~\ref{subsec:Weil}, but should look at
Section~\ref{sec:clark-oneil} on the period-index obstruction theory
for genus $1$ curves, which contains the main tools going into proving
Theorem~\ref{thm:c}.  Section~\ref{sec:proofs} sets up the general
problem of splitting $\mu_N$-gerbes and contains proofs of the main
theorems.  In particular, Section~\ref{subsec:splitting-mu_N} includes
many examples showing the difference between the problem of splitting
Brauer classes and splitting $\mu_N$-gerbes, as well as complete
solutions over nonarchimedean local fields and the real numbers, while
Section~\ref{subsec:mu_n-isogeny} explains the relationship between
$N$-isogenies and splitting genus $\mu_N$-gerbes.  Finally,
Appendix~\ref{app} contains the explicit descent calculations we use
in the proof of Theorem~\ref{thm:a}.

\paragraph{Acknowledgments.} We benefited from conversations with Pete
Clark, Skip Garibaldi, David Gepner, Danny Krashen, Max Lieblich,
Lennart Meier, Anthony V\'arilly-Alvarado, Bianca Viray, and John
Voight and we thank them for their insights. Special thanks go to Tom
Fisher who provided \texttt{MAGMA} code and David Saltman for a
pivotal observation. Finally, Pete Clark, Cathy O'Neil, David Saltman,
and John Voight gave us invaluable feedback and pointers on a
preliminary version of this paper.

BA was supported by the NSF under grants DMS-2102010 and DMS-2120005
and by a Simons Fellowship. He would like to thank the departments at
Dartmouth and Yale for their hospitality during visits where this work
was carried out.  AA was supported by a Simons Foundation
Collaboration Grant and a Walter and Constance Burke Research Award.

Work on this project began at the workshop ``The use of linear
algebraic groups in geometry and number theory'' at Banff in September
2015. We thank the organizers, Garibaldi, Lemire, Parimala, and
Zainoulline, for creating the occasion and BIRS for providing a
wonderful setting for research.

\section{Cohomology and pairings}\label{sec:cohomology}

We give some background on the Tate and Weil pairings, cyclic algebras, and the
O'Neil period-index obstruction map~\cite{oneil}. This work culminates in
Proposition~\ref{prop:oneil}, which gives the connection between the period-index
obstruction map and cyclic algebras, generalizing prior work of
O'Neil~\cite{oneil} and Clark~\cite{clark-wc1}.

\subsection{The Tate pairing}
\label{subsec:Tate}

Mumford attributes the following statement to Rosenlicht and Serre;
see~\cite[p. 227]{mumford-abelian}.  Let $S$ be an arbitrary base scheme.

\begin{proposition}[Rosenlicht--Serre]
    Let $p\colon A\rightarrow S$ be an abelian scheme. The fppf Ext sheaf
    $\Extscr^1(A,\Gm)$ is naturally isomorphic to the dual abelian scheme
    $\widehat{A}=\Pic^0_{A/S}$, while $$\Extscr^0(A,\Gm)=0.$$
\end{proposition}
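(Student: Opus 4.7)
The plan is to dispatch the vanishing $\Extscr^0(A,\Gm) = 0$ by a rigidity argument, and then to construct a natural isomorphism $\widehat{A} \iso \Extscr^1(A,\Gm)$ by identifying central $\Gm$-extensions of $A$ with line bundles in $\Pic^0_{A/S}$ carrying a theorem-of-the-square trivialization. For the vanishing, given any fppf $T \to S$ and any $T$-group-scheme homomorphism $\phi\colon A_T \to \Gm_T$, observe that $A_T$ is proper with geometrically connected fibres over $T$ while $\Gm_T$ is affine. The rigidity lemma for abelian schemes (\cite[\S4]{mumford-abelian}) forces $\phi$ to factor through the unit section of $\Gm_T$, and because $\phi$ preserves identities, $\phi$ is trivial.

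For the isomorphism, starting from an fppf extension $1 \to \Gm \to E \to A \to 1$, the projection $E \to A$ is representable by a $\Gm$-torsor, classified by a line bundle $L$ on $A$ rigidified at the identity section $e$. The commutative group law on $E$ is equivalent data to a canonical isomorphism $m^*L \iso \pi_1^*L \otimes \pi_2^*L$ on $A \times_S A$ compatible with the rigidifications at $(e,e)$; this is exactly the theorem-of-the-square condition characterizing $L \in \Pic^0_{A/S}(S) = \widehat{A}(S)$, compare \cite[\S6]{mumford-abelian}. Conversely, given $L \in \widehat{A}(S)$, the theorem of the cube produces a unique such trivialization of $m^*L \otimes \pi_1^*L^{-1} \otimes \pi_2^*L^{-1}$ normalized at $(e,e)$, and this trivialization is precisely the data of an associative and commutative multiplication on the total space of the $\Gm$-torsor determined by $L$, producing the required central extension.

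The main obstacle is verifying the naturality in $T$ of these two constructions, so that they assemble into mutually inverse maps of fppf sheaves and not just of sections, together with matching the associativity of the group law on $E$ with the cubical cocycle governing the trivialization of $L$. This coherence check is classical but nontrivial, and is where the relative theorem of the cube does the heavy lifting; the associativity constraint on $E$ translates into the standard cubical identity on $A \times_S A \times_S A$, and the uniqueness clause of the theorem of the cube forces the two constructions to be inverse at every stage of the base change. Once these identifications are verified, one obtains the desired natural isomorphism $\widehat{A} \iso \Extscr^1(A,\Gm)$, completing the proof.
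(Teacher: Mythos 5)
Your argument is correct but takes a genuinely different route from the paper. The paper deploys the canonical resolution
$\cdots\rightarrow\bZ[A^2]\rightarrow\bZ[A]\rightarrow A$
in fppf abelian sheaves (from Berthelot--Breen--Messing), takes $\Extscr^*(-,\Gm)$, and reads off $\Extscr^0$ and $\Extscr^1$ from the resulting spectral sequence: the $0$-line gives $\Gm\xrightarrow{\id}\Gm\to\cdots$, forcing $\Extscr^0(A,\Gm)=0$, and the $1$-line identifies $\Extscr^1(A,\Gm)$ as the kernel of $\Lscr\mapsto m^*\Lscr\otimes p_1^*\Lscr^{-1}\otimes p_2^*\Lscr^{-1}$, which is $\Pic^0_{A/S}$ by Mumford \S13. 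You instead argue directly: rigidity kills $\Extscr^0$ (proper, geometrically connected fibres mapping to an affine group), and you set up an explicit bijection between $\Gm$-extensions of $A$ and rigidified line bundles in $\Pic^0$ equipped with a normalized trivialization of the theorem-of-the-square bundle, invoking the theorem of the cube for the cocycle coherence and for uniqueness of the normalized trivialization. The trade-off is familiar: the resolution approach packages all the coherence bookkeeping into the differentials of a complex and scales to computing higher $\Extscr^n$ groups (which the paper actually uses later for the Yoneda product pairings $\widehat A\times\B^n A\to\B^{n+1}\Gm$), while your approach is more elementary, more geometric, and closer in spirit to the classical sources; ultimately both arguments funnel through the same input, namely the cube theorem and Mumford's characterization of $\Pic^0$ via the theorem of the square. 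One small caveat: you should make explicit that uniqueness of the normalized trivialization follows because $A\times_S A$ is proper with geometrically connected fibres, so its global units are pulled back from $S$ and are pinned down by the rigidification at $(e,e)$ --- this is what makes your bijection a map of sheaves rather than merely of sections.
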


\begin{proof}
    We use a technique from crystalline cohomology, namely a resolution
    $$\cdots\rightarrow\bZ[A^3]\oplus\bZ[A^2]\rightarrow\bZ[A^2]\rightarrow\bZ[A]\rightarrow
    A$$ in fppf sheaves of abelian groups (see~\cite{berthelot-breen-messing} for details). We will only need that the
    homomorphism $\bZ[A^2]\rightarrow\bZ[A]$ is given by
    $m^*-p_1^*-p_2^*$, where $m : A^2\rightarrow A$ is the group law
    on $A$ and $p_i : A^2 \to A$ are the projections. Taking Ext
    sheaves out of this resolution into $\Gm$, we obtain a spectral
    sequence converging to $\Extscr^*(A,\Gm)$.  Because each
    term $A^n$ appearing is proper and geometrically connected, the
    $0$-line given by $\Extscr^0(\bZ[A^n],\Gm)\iso\R^0p_*\Gm$ is
    $\Gm\xrightarrow{\id}\Gm\rightarrow\Gm\times\Gm\rightarrow\cdots$.
    The $1$-line is given by $\R^1p_*\Gm$ from the various copies of
    $A$, which gives
    $\Pic_{A/S}\rightarrow\Pic_{A^2/S}\rightarrow\cdots$. Hence, we see that
    $\Extscr^0(A,\Gm)=0$ and that $\Extscr^1(A,\Gm)$ is
    given by the kernel of the
    map
    $$\Pic_{A/S}\xrightarrow{\Lscr\mapsto m^*\Lscr\otimes
      p_1^*\Lscr^{-1}\otimes p_2^*\Lscr^{-1}}\Pic_{A^2/S}.$$ It is
    part of the construction of the dual abelian variety, cf.\
    \cite[\textsection 13]{mumford-abelian}, that this
    kernel is the connected component $\widehat{A}=\Pic^0_{A/S}$ of the identity.
\end{proof}

For any $n$, the Yoneda product (composition) induces a bilinear pairing
$$\widehat{A}\times\B^n A\rightarrow\B^{n+1}\Gm$$ of commutative group
stacks. When $n=0$, we obtain $\widehat{A}\times A\rightarrow\B\Gm$,
which classifies the Poincar\'e line bundle (cf.\
\cite[Theorem~9.14]{huybrechts:Fourier-Mukai}), while for $n=1$ we
obtain $\widehat{A}\times \B A\rightarrow\B^2\Gm$, which defines the
bilinear Tate pairing.

\begin{definition}
  The \linedef{Tate pairing} will refer to the bilinear pairing on cohomology
    $$[-,-]_T\colon\widehat{A}(S)\times\H^1(S,A)\rightarrow\H^2(S,\Gm)$$
  induced from $\widehat{A}\times \B A\rightarrow\B^2\Gm$ as above.
\end{definition}

Here are two different perspectives on the Tate pairing,
cf.~\cite[Thm.~3.7]{ciperiani-krashen} and the citations there. The
following interpretations of the Tate pairing can also be easily
deduced from the functoriality of the Yoneda product and $\Extscr^*$.

First, there is the semiabelian point of view. Sections $\Lscr$ of
$\widehat{A}\iso\Extscr^1(A,\Gm)$ classify semiabelian $S$-scheme
extensions $A_\Lscr$ of the form
$$1\rightarrow\Gm\rightarrow A_\Lscr\rightarrow A\rightarrow 0.$$
Pairing with $\Lscr$ induces a homomorphism that on $S$-points
corresponds to the boundary maps
$$[\Lscr,-]_T\colon\H^n(S,A)\rightarrow\H^{n+1}(S,\Gm),$$ 
in the long exact sequence associated to the extension
$A_\Lscr$. Thus, if $X\in\H^1(S,A)$, then the class
$[\Lscr,X]_T\in\H^2(S,\Gm)$ is the obstruction to lifting the $A$-torsor
$X$ to an $A_\Lscr$-torsor.

Second, there is the Leray--Serre point of view. Fixing an $A$-torsor $X$, the
connected component $\Pic_{X/S}^0$ of the identity of $\Pic_{X/S}$ is
isomorphic to $\widehat{A}$. The Leray--Serre spectral sequence
$$\E_2^{s,t}=\H^s(S,\R^tp_*\Gm)\Longrightarrow\H^{s+t}(X,\Gm)$$
has a $d_2$-differential giving a homomorphism
$$d_2^X\colon\Pic_{X/S}(S)\iso\H^1(S,\R^1p_*\Gm)\rightarrow\H^2(S,\R^0p_*\Gm)\iso\H^2(S,\Gm).$$
When restricted to $\Lscr\in\Pic_{X/S}^0(S)$, we have $[\Lscr,X]_T=d_2^X(\Lscr).$
When $S=\Spec k$ is the spectrum of a field, this differential is the
obstruction to lifting the Galois-invariant divisor class
$\Lscr\in\Pic^0_{X/k}(S)$ to a line bundle on $X$.

The relevance of the Tate pairing to the problem of splitting Brauer
classes by torsors under abelian schemes stems from the fact that,
according to the Leray--Serre perspective, an $A$-torsor $X$ splits
any class of the form $[\Lscr,X]_T \in \H^2(S,\Gm)$, cf.\
\cite[Section~3.1]{ciperiani-krashen}.

We illustrate how to use the Tate pairing to show that every Brauer class on a
local field is split by a genus $1$ curve.

\begin{theorem}[Tate~\cite{tate-wc}, Shatz~\cite{shatz},
    Milne~\cite{milne-wc,milne-wc-addendum}]
    If $K$ is a non-archimedean local field and we equip
    $\widehat{A}(K)$ with the analytic topology and $\H^1(\Spec K,A)$
    with the discrete topology, then the Tate pairing $$[-,-]_T\colon
    \widehat{A}(K)\times\H^1(\Spec K,A)\rightarrow\Br(K)\iso\bQ/\bZ$$ is continuous
    and non-degenerate.
\end{theorem}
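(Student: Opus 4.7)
The plan is to deduce both continuity and non-degeneracy from local Tate duality for the finite Galois module $A[n]$, via compatibility with the Weil pairing. The key structural input is that $\H^1(\Spec K, A)$ is torsion (as a Galois cohomology group for a group scheme whose $K$-points are a commutative group, one reduces via Kummer-type sequences to torsion cohomology), so every element $X \in \H^1(\Spec K, A)$ is killed by some integer $n \geq 1$.

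For continuity, I would observe that if $X$ is $n$-torsion, then the map $[-,X]_T\colon\widehat{A}(K)\to\Br(K)$ kills $n\widehat{A}(K)$, hence factors through the finite group $\widehat{A}(K)/n$. Here I would use that for a non-archimedean local field $K$, the group $\widehat{A}(K)$ is a compact $K$-analytic Lie group whose torsion-free part is a finitely generated $\ZZ_p$-module, so $\widehat{A}(K)/n$ is finite and the subgroup $n\widehat{A}(K)$ is open. Continuity of the pairing follows.

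For non-degeneracy, the heart of the argument is to identify the Tate pairing on $n$-torsion with the cup product pairing coming from local duality. Consider the Kummer sequences
\[
0\to A[n]\to A\xrightarrow{n} A\to 0,\qquad 0\to\widehat{A}[n]\to\widehat{A}\xrightarrow{n}\widehat{A}\to 0,
\]
which induce connecting maps $\delta\colon\widehat{A}(K)/n\hookrightarrow\H^1(\Spec K,\widehat{A}[n])$ and $\partial\colon\H^1(\Spec K, A[n])\twoheadrightarrow\H^1(\Spec K, A)[n]$. The Weil pairing $A[n]\times\widehat{A}[n]\to\mu_n$ is a perfect duality of finite Galois modules, and I would show that for $\Lscr\in\widehat{A}(K)$ and $X=\partial(\xi)\in\H^1(\Spec K,A)[n]$,
\[
[\Lscr,X]_T \;=\; \delta(\Lscr)\cup\xi \;\in\; \H^2(\Spec K,\mu_n)\subset\Br(K),
\]
up to sign. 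This compatibility is the classical comparison between boundary maps in short exact sequences and the Yoneda pairing that defines $[-,-]_T$; concretely, it comes from writing the semiabelian extension $A_\Lscr$ representing $\Lscr$ as a pushout of the Kummer sequence along the Weil pairing with $\delta(\Lscr)$. This is the step I expect to be the main obstacle, and in a full write-up I would either cite the classical references (Tate, Milne's \emph{Arithmetic Duality Theorems}) or spell out the Yoneda-product identification directly from the stack-theoretic setup in Section~\ref{subsec:Tate}.

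Once the compatibility is established, non-degeneracy follows from local Tate duality for finite Galois modules: the cup product
\[
\H^1(\Spec K,\widehat{A}[n])\times\H^1(\Spec K,A[n])\xrightarrow{\cup}\H^2(\Spec K,\mu_n)\iso\tfrac{1}{n}\ZZ/\ZZ
\]
is a perfect pairing of finite groups. Given a nonzero $X\in\H^1(\Spec K,A)$ killed by $n$, lift it to $\xi\in\H^1(\Spec K,A[n])$; then $\xi\notin\ker(\partial)=\mathrm{image}(A(K)/n)$ need not hold, but by perfectness there is a class $\eta\in\H^1(\Spec K,\widehat{A}[n])$ with $\xi\cup\eta\neq 0$; after subtracting a lift from $A(K)/n$ from $\xi$ (which pairs trivially against $\delta(\widehat{A}(K))$, one checks, again by duality for $A[n]$) we may arrange $\eta=\delta(\Lscr)$. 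Conversely, if $\Lscr\in\widehat{A}(K)$ pairs trivially with every $X$, then $\delta(\Lscr)$ pairs trivially with every $\xi\in\H^1(\Spec K,A[n])$, whence $\delta(\Lscr)=0$ and so $\Lscr\in n\widehat{A}(K)$. Passing to the limit over $n$ shows $\Lscr=0$ since $\widehat{A}(K)$ has no nonzero divisible elements, completing the proof of non-degeneracy.
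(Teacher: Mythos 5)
The paper does not prove this theorem; it is stated with citations to Tate, Shatz, and Milne and used as a black box (most directly in Example~\ref{ex:local} and Proposition~\ref{prop:local}). So there is no paper proof to compare against; what follows is an assessment of your sketch on its own terms.

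Your overall strategy is the standard one: reduce to local Tate duality for the finite Galois modules $A[n]$ via the Weil pairing, and deduce continuity from finiteness of $\widehat{A}(K)/n$. The continuity argument is essentially complete. The compatibility
\[
[\Lscr,X]_T = \delta(\Lscr)\cup\xi
\]
between the Tate pairing and the cup product through the Weil pairing is also the right bridge, and you correctly flag it as something to be cited or verified.

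The genuine gap is in the non-degeneracy argument, and it is larger than you indicate. Perfectness of the cup product pairing on $\H^1(\Spec K,A[n])\times\H^1(\Spec K,\widehat{A}[n])$ gives you \emph{some} $\eta$ with $\xi\cup\eta\neq 0$, but what you actually need — and what your argument silently uses — is the much stronger statement that the two Kummer images $\delta'(A(K)/n)\subset\H^1(\Spec K,A[n])$ and $\delta(\widehat{A}(K)/n)\subset\H^1(\Spec K,\widehat{A}[n])$ are \emph{exact} annihilators of each other under the cup product pairing. One containment (that they annihilate each other) is soft; the equality is the heart of Tate's theorem. It requires a counting argument via the local Euler characteristic formula (comparing $\#\H^0\cdot\#\H^2/\#\H^1$ to a power of $\#(\mathcal{O}_K/n)$), which in turn uses the structure of $A(K)$ as a compact $p$-adic Lie group, and is precisely what distinguishes this theorem from a formal consequence of Poitou--Tate duality for finite modules. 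Your phrase ``one checks, again by duality for $A[n]$'' papers over this; duality for $A[n]$ alone does not deliver it. A secondary, smaller issue: the remark that ``$\xi\notin\ker(\partial)$ need not hold'' is misstated — if $\partial(\xi)=X\neq 0$ then $\xi\notin\ker(\partial)$ automatically — and the subsequent sentence about ``subtracting a lift from $A(K)/n$ from $\xi$'' to ``arrange $\eta=\delta(\Lscr)$'' does not parse as written, since modifying $\xi$ cannot move a given $\eta$ into the image of $\delta$. The clean version is a direct contradiction argument: if $\xi\cup\delta(\Lscr)=0$ for all $\Lscr$, then $\xi$ lies in the annihilator of $\delta(\widehat{A}(K)/n)$, which by the exact-annihilator theorem is $\delta'(A(K)/n)=\ker(\partial)$, whence $X=0$.
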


\begin{example}\label{ex:local}
  Let $K$ be a non-archimedean local field with uniformizer
  $\varpi$. The elliptic curve $E$ over $\Spec K$ given by
  $y^2+xy=x^3+\varpi^N$ has split multiplicative reduction and, by a
  result of Kodaira and N\'eron (see~\cite[Thm.~VII.6.1]{silverman}
  or~\cite[Sec.~1.5]{blr-neron}), $E(K)/E_0(K)\iso\bZ/N$, where
  $E_0(K)\subseteq E(K)$ is the group of points of smooth
  reduction. It follows from the non-degeneracy of the Tate pairing
  that, for any generator $P$ of $E(K)/E_0(K)$, there exists an
  $E$-torsor $X$ such that $[P,X]_T$ generates
  $\tfrac{1}{N}\bZ/\bZ\subseteq\bQ/\bZ$.\footnote{Here and elsewhere
    we will silently identify $E$ with $\widehat{E}$ via the
    polarization arising from the ample line bundle $\Oscr(0_E)$.} In
  particular, every class of $\Br(K)\iso\bQ/\bZ$ is split by a genus
  $1$ curve.
\end{example}

\subsection{The Weil pairing}
\label{subsec:Weil}

Let $\varphi\colon A\rightarrow A'$ be an isogeny of abelian
$S$-schemes with kernel $\ker(\varphi)$. Taking $\Extscr^*$ into
$\Gm$, we obtain an exact sequence
$$0\rightarrow\Extscr^0(\ker(\varphi),\Gm)\rightarrow\Extscr^1(A',\Gm)\rightarrow\Extscr^1(A,\Gm)\rightarrow\Extscr^1(\ker(\phi),\Gm).$$
The central two terms can be rewritten as
$\widehat{A'}\xrightarrow{\widehat{\varphi}}\widehat{A}$, which is the
dual isogeny.  Since isogenies are surjective as fppf sheaves, we
obtain a short exact sequence
$$0\rightarrow\widehat{\ker(\varphi)}\rightarrow\widehat{A'}\xrightarrow{\widehat{\varphi}}\widehat{A}\rightarrow
0,$$ where $\widehat{\ker(\varphi)}$ is the usual Cartier dual. In particular, we obtain a canonical isomorphism between the Cartier dual
of $\ker(\varphi)$ and of $\ker(\widehat{\varphi})$. In other words, there is a
canonical perfect pairing
$$\ker(\varphi)\times\ker(\widehat{\varphi})\rightarrow\Gm.$$ If
$\ker(\varphi)\subseteq A[N]$, e.g., if $\varphi$ is an $N$-isogeny,
then this pairing lands in $\mu_N$.

\begin{example}
    If $\varphi$ is multiplication by $N$, we obtain the Weil pairing
    $$A[N]\times\widehat{A}[N]\rightarrow\mu_N.$$ In the special case of an
    elliptic curve $E$ (or a principally polarized abelian variety), we can
    rewrite this as a perfect pairing $E[N]\times E[N]\rightarrow\mu_N$.
\end{example}

\begin{definition}\label{def:weil}
  While there are several possible versions of a
  \linedef{cohomological Weil pairing}, we will need the following two
  on an elliptic curve $E$ over $S$.  If $\varphi$ is an $N$-isogeny,
  then the Weil pairing associated to $\varphi$ is the bilinear pairing
    $$(-,-)_\varphi\colon\H^1(S,\ker(\varphi))\times\H^1(S,\ker(\widehat{\varphi}))\rightarrow\H^2(S,\mu_N)$$
    taking values in $\mu_N$-cohomology.  Taking multiplication by
    $N$, the Weil pairing is the bilinear pairing
    $$[-,-]_N\colon\H^1(S,E[N])\times\H^1(S,E[N])\rightarrow\H^2(S,\Gm)$$
taking values in the Brauer group.
\end{definition}

\begin{remark}
  The following compatibility between the Tate and Weil pairings will
  be useful to us.  Suppose that $X\in\H^1(S,E)[N]$, i.e, an
  $N$-torsion element of $\H^1(S,E)$. Pairing with $X$ defines a
  homomorphism $[-,X]_T\colon E(S)\rightarrow\H^2(S,\Gm)$ that factors
  through $E(S)/NE(S)\subseteq\H^1(S,E[N])$. If we lift $X$ to an
  $E[N]$-torsor $Y$, then we have that for $\Lscr\in E(S)$
    $$[\Lscr,X]_T=[\overline{\Lscr},Y]_N$$ in $\H^2(S,\Gm)$, where
    $\overline{\Lscr}$ is the image of $\Lscr$ in $E(S)/NE(S)$.
\end{remark}

\subsection{The cup product pairing and cyclic algebras}
\label{sec:cyclic}

The groups $\bZ/N$ and $\mu_N$ are Cartier dual.  Under the natural
identification $\mu_N = \Homscr(\bZ/N,\Gm)$, the bilinear evaluation
pairing $\bZ/N \times \mu_N \to \Gm$ takes the form
$(a,\zeta) \mapsto \zeta^a$, hence takes values in $\mu_N$.  It
induces a canonical isomorphism of sheaves $\bZ/N \otimes \mu_N \iso \mu_N$. Thus
we have a bilinear cup product pairing
$$
(-)\cup(-) \colon \H^1(S,\bZ/N) \times \H^1(S,\mu_N) \to \H^2(S,\bZ/N \otimes \mu_N) \to \H^2(S,\mu_N)
$$
and classes of the form $\chi\cup u$ in $\H^2(S,\mu_N)$ are called
cyclic.\footnote{Note that we write $u$ for a general element of
$\H^1(S,\mu_N)$, although for a general scheme $u$ is not necessarily a unit
modulo $N$th powers. Rather, there is a natural exact sequence
$$0\rightarrow\Oscr(S)^\times/(\Oscr(S)^\times)^N\rightarrow\H^1(S,\mu_N)\rightarrow\Pic(S)[N]\rightarrow
0.$$}

This pairing is related to the cyclic algebra construction as follows. Consider the nontoral embedding
$\bZ/N \times \mu_N \hookrightarrow \PGL_N$, which on points sends
$(1,1)$ to the $N \times N$-matrix
$$
\begin{pmatrix}
        0   &   0   &   \cdots  &   0   &   1\\
        1   &   0   &   \cdots  &   0   &   0\\
        0   &   1   &   \cdots  &   0   &   0\\
        \vdots  &   \vdots &   \ddots & \vdots    &   \vdots\\
        0   &   0   &   \cdots  &   1   &   0\\
    \end{pmatrix}
$$
and $(0,\zeta)$ to the diagonal matrix $(1, \zeta, \dotsc,
\zeta^{N-1})$.  This embedding induces a map on cohomology
$$
\H^1(S,\bZ/N \times \mu_N) = \H^1(S,\bZ/N) \times \H^1(S,\mu_N) \to \H^1(S,\PGL_N)
$$
which to $\chi \in \H^1(S,\bZ/N)$ and $u \in \H^1(S,\mu_N)$ associates
an Azumaya algebra $\Ascr(\chi,u)$ of degree $N$ on $S$, which is
called the cyclic algebra associated to the pair $(\chi,u)$.  When
$S = \Spec k$ is the spectrum of a field, this coincides with the
classical cyclic algebra construction by
\cite[Props.~2.5.2,~4.7.3]{gille-szamuely}.  Composing with the
natural coboundary Brauer class map $\H^1(S,\PGL_N) \to \H^2(S,\Gm)$,
we arrive at the well-known result that the Brauer class of the cyclic
algebra $\Ascr(\chi,u)$ in $\H^2(S,\Gm)$ coincides with the image,
which we denote by
$[\chi,u]$, of the cup product $\chi\cup u$ under the natural map
$\H^2(S,\mu_N) \to \H^2(S,\Gm)$.

We now give a stack-theoretic interpretation, building on
Lieblich~\cite[Sec.~4.3]{lieblich-surface}.  The cup product pairing
on $\H^1(S,\bZ/N \times \mu_N) = \H^1(S,\bZ/N) \times \H^1(S,\mu_N)$,
followed by the natural map $\H^2(S,\mu_N) \to \H^2(S,\Gm)$, gives
rise to a functorial assignment $(\chi,u)\mapsto[\chi,u]$ on cohomology
$\H^1(S,\bZ/N \times \mu_N) \to \H^2(S,\Gm)$, which in turn
corresponds to a Brauer class on the classifying stack
$\B (\bZ/N \times \mu_N)$.

Using the Leray spectral sequence in flat
cohomology, one can derive a decomposition
\begin{equation}
\begin{split}
\H^2(\B (\bZ/N \times \mu_N),\Gm) &{}\iso\bZ/N\oplus\H^1(S,\Pic_{\B(\bZ/N \times \mu_N)/S})\oplus\H^2(S,\Gm)\label{eq:brB}\\
&{}\iso\bZ/N\oplus\H^1(S,\bZ/N \times \mu_N)\oplus\H^2(S,\Gm).
\end{split}
\end{equation}
Here we have utilized isomorphisms
$\Pic_{\B(\bZ/N \times \mu_N)/S} \iso \Homscr(\bZ/N \times
\mu_N,\Gm) \iso \bZ/N \times \mu_N$, where the first is canonical
and the second follows from Cartier duality.

A computation similar to \cite[Sec.~4.3]{lieblich-surface} then shows that
the Brauer class $\kappa \in \B (\bZ/N \times \mu_N)$, arising from the cup
product pairing, is a generator of the $\bZ/N$ factor in the decomposition \eqref{eq:brB}.

\subsection{The obstruction theory of Clark and O'Neil}
\label{sec:clark-oneil}

Let $\Lscr$ be a relatively ample line bundle of degree $N$ on an
elliptic curve $p\colon E\to S$, so that $\Lscr$ defines an
$S$-embedding $E\hookrightarrow\bP(p_*\Lscr)$, where $p_*\Lscr$ is a
rank $N$ vector bundle on $S$ by Riemann--Roch. The $S$-scheme of
$S$-automorphisms $\PGL_N^\Lscr$ of $\bP(p_*\Lscr)$ is a typically
non-split form of $\PGL_N$ given specifically by $\GL_N^\Lscr/\Gm$,
where $\GL_N^\Lscr$ is the sheaf of automorphisms of the vector bundle
$p_*\Lscr$ over $S$. If $p_*\Lscr$ is trivial, then these forms will
be split. The subgroup of $\PGL_N^\Lscr$ coming from translations of
the elliptic curve agrees with $E[N]$. Indeed, if $x\in E(T)$ is a
section over some $S$-scheme $T$, then $x^*\Lscr\iso\Lscr$ if and only
$x\in E[N](T)$. It follows that sections of $E[N]$ extend to give
automorphisms of $\bP(p_*\Lscr)$. The induced map
$E[N]\hookrightarrow\PGL_N^\Lscr$ is a typically twisted form of the
standard non-toral cyclic subgroup $\bZ/N\times\mu_N\subseteq\PGL_N$
utilized in Section~\ref{sec:cyclic}. Note that it is twisted both in
the sense that $E[N]$ is only \'etale-locally isomorphic (in
the non-supersingular case) to $\bZ/N\times\mu_N$ and it is also
twisted in the sense that even given a full level structure and a
trivialization of $p_*\Lscr$, the associated embedding
$\bZ/N\times\mu_N\hookrightarrow\PGL_N$ is not {\em a priori} conjugate to the
standard non-toral cyclic subgroup over $S$.

In any case, by pulling back, we obtain a commutative diagram of
central extensions
\begin{equation}\label{eq:theta}
    \begin{gathered}
        \xymatrix{
            1\ar[r]&\Gm\ar@{=}[d]\ar[r]&\Gscr_\Lscr\ar[r]\ar[d]&E[N]\ar[d]\ar[r]&0\\
            1\ar[r]&\Gm\ar[r]&\GL_N^\Lscr\ar[r]&\PGL_N^\Lscr\ar[r]&0,
        }
    \end{gathered}
\end{equation}
where $\Gscr_\Lscr$ is a nonabelian affine algebraic group scheme, the
theta group of Mumford associated to the pair $(E,\Lscr)$;
see~\cite{mumford-abelian-1}.

\begin{definition}
    Associated to the theta group $\Gscr_\Lscr$ is the {\bf period-index
    obstruction map}
    $$Ob_\Lscr\colon\H^1(S,E[N])\rightarrow\H^2(S,\Gm),$$
    which is the boundary map in non-abelian cohomology associated to
    the short exact sequence in the top row of~\eqref{eq:theta}.  We
    will write $Ob$ for the special case when $\Lscr=\Oscr(N 0_E)$ is
    the line bundle associated to the origin of $E$ with multiplicity
    $N$.
\end{definition}

The period-index obstruction map gets its name from the following result.

\begin{proposition}[{\cite{oneil}, \cite{clark-wc1}}]
    Let $k$ be a field.
    If $X\in\H^1(\Spec k,E)$ has $\per(X)=N$, then $\ind(X)=N$ if and only if for some
    lift $Y$ of $X$ to $\H^1(\Spec k,E[N])$, the period-index obstruction $Ob(Y)$ vanishes.
\end{proposition}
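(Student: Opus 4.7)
The plan is to interpret the obstruction $Ob(Y)$ as the Brauer class of a Severi--Brauer variety that canonically contains the genus $1$ curve $X$ as a degree-$N$ subscheme, and then to translate between lifts $Y$ of $X$ and degree-$N$ line bundles on $X$ defined over $k$.

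First, since $\per(X) = N$, the Kummer sequence $0 \to E[N] \to E \xto{N} E \to 0$ shows that $X$ admits lifts $Y \in \H^1(\Spec k, E[N])$, and the set of such lifts is a torsor under $E(k)/NE(k)$. The top row of diagram~\eqref{eq:theta} tells us that such a $Y$ lifts further to a $\Gscr_\Lscr$-torsor $\tilde Y$ precisely when $Ob(Y) = 0$ in $\H^2(\Spec k, \Gm)$. Pushing $Y$ forward along $E[N] \hookrightarrow \PGL_N^\Lscr$ via~\eqref{eq:theta} produces a Severi--Brauer variety $P_Y$ of dimension $N-1$ over $k$ whose Brauer class is exactly $Ob(Y)$, and lifts $\tilde Y$ correspond to trivializations $P_Y \iso \bP^{N-1}_k$.

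The geometric heart of the argument is that the canonical embedding $E \hookrightarrow \bP(p_*\Lscr)$ given by the complete linear system of $\Lscr = \Oscr(N \cdot 0_E)$ is $\Gscr_\Lscr$-equivariant, covering the translation action of $E[N]$ on $E$; this equivariance is precisely what the theta group encodes. Twisting this embedding by $Y$ therefore produces a closed embedding of the underlying $E$-torsor $X$ into $P_Y$ as a degree-$N$ curve. The forward direction then falls out immediately: if $Ob(Y)=0$, any trivialization $P_Y \iso \bP^{N-1}_k$ exhibits $X$ as a degree-$N$ curve in projective space, and pulling back $\Oscr(1)$ yields a line bundle of degree $N$ on $X$ defined over $k$, so $\ind(X) \mid N$; combined with $\per(X) \mid \ind(X)$ this gives $\ind(X) = N$.

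For the converse, given a degree-$N$ line bundle $M$ on $X$ defined over $k$, the complete linear system $|M|$ embeds $X$ into $\bP(\H^0(X,M)) \iso \bP^{N-1}_k$. After base change to $\bar k$, this embedding is isomorphic, up to a translation by $E(\bar k)$ and an automorphism of $\Lscr$, to the canonical embedding $E_{\bar k} \hookrightarrow \bP(p_*\Lscr)_{\bar k}$; the resulting Galois descent data take values in the theta group $\Gscr_\Lscr$, yielding a $\Gscr_\Lscr$-torsor $\tilde Y$ whose image $Y \in \H^1(\Spec k, E[N])$ lifts $X$ and satisfies $Ob(Y)=0$ tautologically. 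The main obstacle is verifying rigorously that the descent data genuinely refine to $\Gscr_\Lscr$: this relies on the universal property of the theta group as classifying pairs $(x \in E[N],\, \phi \colon \Lscr \iso t_x^*\Lscr)$, together with the observation that a $\bar k$-isomorphism between the two embeddings is determined, up to such a pair, by the fact that $\H^0(X, M)$ is identified with $\H^0(E, \Lscr)$ up to scalar and translation.
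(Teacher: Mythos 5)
The paper cites this proposition to O'Neil \cite{oneil} and Clark \cite{clark-wc1} without supplying its own proof, so there is no internal argument to compare against line-by-line. That said, your forward direction reproduces (independently) the twisting argument the paper does prove as Lemma~\ref{lem:pio}: twist the flag $E[N]\subseteq E\subseteq\bP(p_*\Lscr)$ by $Y$ to get $Y\subseteq X\subseteq P_Y$ with $[P_Y]=Ob(Y)$, and if this Brauer class vanishes, pull back $\Oscr(1)$ to get a degree-$N$ line bundle on $X$, giving $\ind(X)\mid N$; with $N=\per(X)\mid\ind(X)$ this forces $\ind(X)=N$. This direction is solid.

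The converse is the substantive half, and your sketch has the right skeleton: from $M\in\Pic^N(X)$ defined over $k$ (which exists since the image of $\deg\colon\Pic(X)\to\bZ$ is $\ind(X)\bZ$ --- worth saying explicitly), embed $X$ by $|M|$, choose $\psi\colon X_{\bar k}\iso E_{\bar k}$ as $E$-torsors, and use the translational freedom in $\psi$ to normalize $\psi_*M_{\bar k}\iso\Lscr$ (possible because every degree-$N$ line bundle on $E_{\bar k}$ is a translate of $\Lscr$). Then the 1-cocycle $a_\sigma=\sigma^{-1}\psi\sigma\psi^{-1}=t_{c_\sigma}$ satisfies $a_\sigma^*\Lscr\iso\Lscr$ because $M$ and $\Lscr$ are $k$-rational, forcing $c_\sigma\in E[N]$, so $Y=[(c_\sigma)]$ lifts $X$. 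The point you flag as "the main obstacle" can be dispatched cleanly and you should say how: the lift of $(c_\sigma)$ to a genuine $\Gscr_\Lscr$-cocycle (and not merely one up to scalar, which is what the vanishing of $Ob(Y)$ requires) is supplied by the descent data of $M$ itself. Since $M$ is $k$-rational, one has canonical isomorphisms $m_\sigma\colon M\to\sigma^*M$ satisfying the cocycle identity, and transporting these through a fixed $\psi^*\Lscr\iso M$ produces isomorphisms $\phi_\sigma\colon\Lscr\to t_{c_\sigma}^*\Lscr$ with the $\Gscr_\Lscr$-cocycle condition on the nose; thus $Y$ lifts to $\H^1(k,\Gscr_\Lscr)$ and $Ob(Y)=0$. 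Two minor caveats worth a sentence each: for $N=2$ the linear system $|M|$ gives a double cover of a conic rather than an embedding, so the diagram needs cosmetic modification, and when $\operatorname{char}k\mid N$ the group $E[N]$ is not \'etale and one must run the descent argument in the fppf topology rather than over $\Gal(\bar k/k)$, consistent with the paper's conventions.
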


A result of Zarhin shows how the period-index obstruction map and the Weil pairing are related.

\begin{proposition}[{\cite{zarhin}}]\label{prop:zarhin}
    The Weil pairing in cohomology can be expressed via the period-index
    obstruction map $Ob_\Lscr$ as
  $$[Y,Z]_N=Ob_\Lscr(Y+Z)-Ob_\Lscr(Y)-Ob_\Lscr(Z)$$ 
    for $Y,Z\in\H^1(S,E[N])$.
\end{proposition}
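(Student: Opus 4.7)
The plan is to identify $Ob_\Lscr$ as the non-abelian connecting map for the central extension of fppf sheaves of groups
\[ 1 \to \Gm \to \Gscr_\Lscr \to E[N] \to 0 \]
defined by the theta group, and then to measure its failure to be a homomorphism by a direct cocycle computation, identifying the resulting correction term with the Weil pairing. The guiding principle is that a connecting map from a non-abelian central extension of an abelian group is a quadratic function whose associated bilinear form is the commutator pairing of the extension, and a classical theorem of Mumford~\cite{mumford-abelian-1} says that for the theta group $\Gscr_\Lscr$ this commutator pairing is exactly the Weil pairing on $E[N]$ with values in $\mu_N\subseteq\Gm$.

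Concretely, choose an fppf cover refined enough that $Y,Z\in\H^1(S,E[N])$ are represented by Čech 1-cocycles $(y_{ij}), (z_{ij})$ with values in $E[N]$, and choose local set-theoretic lifts $\tilde{y}_{ij},\tilde{z}_{ij}$ to $\Gscr_\Lscr$. Then $Ob_\Lscr(Y)$ is represented by the $\Gm$-valued 2-cocycle
\[ (d\tilde{y})_{ijk} := \tilde{y}_{ij}\tilde{y}_{jk}\tilde{y}_{ik}^{-1}, \]
and similarly for $Z$. Because $E[N]$ is abelian, $Y+Z$ is represented by $(y_{ij}+z_{ij})$, and a natural lift is the pointwise product $\tilde{y}_{ij}\tilde{z}_{ij}$. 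Using centrality of $\Gm\subseteq \Gscr_\Lscr$ and the identity $ab=[a,b]\cdot ba$, a short manipulation yields
\[ (d(\tilde{y}\tilde{z}))_{ijk} \;=\; [\tilde{z}_{ij},\tilde{y}_{jk}]\cdot (d\tilde{y})_{ijk}\cdot (d\tilde{z})_{ijk}, \]
and the commutator only depends on the images in $E[N]$, giving the $\Gm$-valued commutator pairing $c_\Lscr(z_{ij},y_{jk})$. Passing to cohomology, the discrepancy factor is precisely the Čech cup product $Z\cup_{c_\Lscr} Y$ in $\H^2(S,\Gm)$ induced by the bilinear pairing $c_\Lscr$, so
\[ Ob_\Lscr(Y+Z)-Ob_\Lscr(Y)-Ob_\Lscr(Z) \;=\; Z\cup_{c_\Lscr}Y. \]

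To finish, apply Mumford's theorem identifying $c_\Lscr$ with the Weil pairing $E[N]\times E[N]\to\mu_N$ (this is intrinsic to the theta group and, after the canonical inclusion $\mu_N\hookrightarrow\Gm$, the right-hand cup product becomes $[Z,Y]_N$, equivalently $[Y,Z]_N$ since the Weil pairing on an elliptic curve is alternating). The main obstacle is purely bookkeeping: one must verify the sign and order conventions so that the Čech cup product of $z$ and $y$ with respect to $c_\Lscr$ matches the Weil pairing $[Y,Z]_N$ exactly as defined in Definition~\ref{def:weil}, rather than its inverse. Once these conventions are aligned with Mumford's, the identity is immediate from the cocycle computation above, and the proposition follows.
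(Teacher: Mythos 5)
Your argument follows the same two-step route as the paper's sketch: first identify the discrepancy $Ob_\Lscr(Y+Z)-Ob_\Lscr(Y)-Ob_\Lscr(Z)$ with the cup product induced by the commutator pairing of the central extension $1\to\Gm\to\Gscr_\Lscr\to E[N]\to 0$, then apply Mumford's theorem to identify that commutator pairing with the Weil pairing on $E[N]$. The only difference is that the paper defers the first step to Zarhin whereas you carry out the \v{C}ech cocycle computation explicitly; the substance and structure are the same.
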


\begin{proof}[Sketch of proof]
    By definition, the theta group $\Gscr_\Lscr$ is a central extension
    of $E[N]$ by $\Gm$ and as such there is a corresponding alternating
    pairing $E[N]\times E[N]\rightarrow\Gm$ associated to the central extension.
    Zarhin proves that the induced bilinear pairing
    $[-,-]_\Lscr\colon\H^1(S,E[N]) \times \H^1(S,E[N])$ on cohomology satisfies
    $$[Y,Z]_\Lscr=Ob_\Lscr(Y+Z)-Ob_\Lscr(Y)-Ob_\Lscr(Z).$$ It is just a
    statement about the connection between the boundary map in non-abelian
    cohomology and the pairing associated to the central extension.
    However, the pairing $[Y,Z]_\Lscr$ coincides with the Weil pairing $[Y,Z]_N$
    by Mumford~\cite[(5) on p.228]{mumford-abelian}.
\end{proof}

Recall that a function $f\colon A\rightarrow B$ of abelian groups is {\bf
quadratic} if $f(nx)=n^2f(x)$ for all integers $n$ and if the symmetric
function $b_f(x,y)=f(x+y)-f(x)-f(y)$ is bilinear. By
Proposition~\ref{prop:zarhin}, $b_{Ob_\Lscr}$ is bilinear. When $\Lscr$ is a
symmetric line bundle on $E$, meaning that $[-1]^*\Lscr\iso\Lscr$, then
$Ob_\Lscr$ is additionally quadratic with associated bilinear form
$b_{Ob_\Lscr}=[-,-]_N$
given by the cohomological Weil pairing by Proposition~\ref{prop:zarhin}.
To see this, Mumford uses an isomorphism $[-1]^*\Lscr\iso\Lscr$ to construct,
for any integer $n$, a
group endomorphism $\delta_n\colon\Gscr_\Lscr\rightarrow\Gscr_\Lscr$ fitting into a
commutative diagram
$$\xymatrix{
    1\ar[r]&\Gm\ar[r]\ar[d]^{n^2}&\Gscr_\Lscr\ar[r]\ar[d]^{\delta_n}&E[N]\ar[d]^n\ar[r]&0\\
    1\ar[r]&\Gm\ar[r]&\Gscr_\Lscr\ar[r]&E[N]\ar[r]&0
}$$
of exact sequences. This implies immediately the relation $Ob_\Lscr(nY)=n^2
Ob_\Lscr(Y)$ so that $Ob_\Lscr$ is quadratic.

\begin{example}
  The line bundle $\Oscr(N 0_E)$ is evidently symmetric, so that
  $Ob$ is a quadratic function with associated
  bilinear form given by the Weil pairing.
\end{example}

When $\Lscr$ is symmetric and the degree $N$ is odd, it follows that
we can compute $$Ob_\Lscr(Y)=\tfrac{1}{2}[Y,Y]_N$$ in $\H^2(S,\Gm)$.  Note
here that the alternating pairing $E[N]\times E[N]\rightarrow\Gm$
becomes symmetric in cohomology
$\H^1(S,E[N])\times\H^1(S,E[N])\rightarrow\H^2(S,\Gm)$.

Now, we specialize to the case where we have a symplectic full level $N$
structure, in a sense that we will make precise.  We consider the
standard symplectic bilinear form
$$(\bZ/N \times \mu_N) \times (\bZ/N \times \mu_N) \to \Gm$$ induced
from the evaluation pairing, which takes the form
$((a,\zeta),(b,\xi)) \mapsto \zeta^b \xi^{-a}$. A full level $N$
structure on an elliptic curve $E$ is an isomorphism of group schemes
$\varphi \colon E[N] \to \bZ/N \times \mu_N$;\footnote{Often, it is
  $\varphi^{-1}$ which is called the full level $N$ structure;
  moreover, when $N$ is not invertible, it is usually more desirable
  to use Drinfeld level $N$ structures to capture supersingular
  phenomena; see~\cite{katz-mazur}.} we say the full level $N$
structure $\varphi$ is symplectic if it is an isometry from the Weil
pairing on $E[N]$ to the standard symplectic form on
$\bZ/N\times\mu_N$.  

We remark that over a field, the moduli space of elliptic curves with a symplectic
full level $N$ structure is geometrically connected, and is a
component of the moduli space of elliptic curves with a full level $N$
structure.  For more remarks on this type of level structure, see
\cite[Section~4.1]{poonen_schaefer_stoll}.

Given any full level $N$ structure there is an induced map on
cohomology
$$
\H^1(\varphi) \colon \H^1(S, E[N]) \to \H^1(S, \bZ/N
\times \mu_N) = \H^1(S, \bZ/N) \times \H^1(S, \mu_N)
$$
so that to $Y \in \H^1(S, E[N])$ there is corresponding tuple
$(\chi, u) \in \H^1(S, \bZ/N) \times \H^1(S, \mu_N)$.  We now proceed
to show how this is related to the Brauer class $[\chi,u] \in \H^2(S, \Gm)$ of
the cyclic algebra $\Ascr(\chi,u)$.  First, we need the following.

\begin{lemma}\label{lem:cycliczarhin}
  The standard symplectic form on $\bZ/N\times\mu_N$ induces a pairing
  on cohomology
  $$[-,-]_\kappa\colon\H^1(S,\bZ/N\times\mu_N)\times\H^1(S,\bZ/N\times\mu_N)
  \to \H^2(S,\Gm)$$
  given by
    \begin{align*}
        [(\chi_0,u_0),(\chi_1,u_1)]_\kappa
        &=[\chi_0+\chi_1,u_0u_1]-[\chi_0,u_0]-[\chi_1,u_1]\\
        &=[\chi_0,u_1]+[\chi_1,u_0].
    \end{align*}
\end{lemma}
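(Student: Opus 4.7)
The plan is to extract both equalities from the general principle, already used in the proof of Proposition~\ref{prop:zarhin}, that for a central extension $1\to\Gm\to\widetilde{G}\to G\to 0$ of a commutative group scheme $G$ by $\Gm$, the connecting map $\H^1(S,G)\to\H^2(S,\Gm)$ is a quadratic function whose polarization coincides with the bilinear pairing on cohomology induced by the commutator pairing $G\times G\to\Gm$ of the extension.

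Applied here, the Brauer class $\kappa$ on $\B(\bZ/N\times\mu_N)$ identified in the preceding paragraph classifies a central extension
$$1\to\Gm\to\Gscr_\kappa\to\bZ/N\times\mu_N\to 0$$
whose commutator pairing is the standard symplectic form $((a,\zeta),(b,\xi))\mapsto\zeta^b\xi^{-a}$; this is how $\kappa$ is characterized among central extensions with abelian quotient. The associated connecting homomorphism sends $(\chi,u)$ to the cyclic algebra Brauer class $[\chi,u]\in\H^2(S,\Gm)$, and this assignment is quadratic since $[n\chi,u^n]=n^2[\chi,u]$ by bilinearity of the cup product. The cited principle then identifies the polarization of $q(\chi,u):=[\chi,u]$ with $[-,-]_\kappa$, yielding the first equality
$$[(\chi_0,u_0),(\chi_1,u_1)]_\kappa = [\chi_0+\chi_1,u_0u_1]-[\chi_0,u_0]-[\chi_1,u_1].$$

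For the second equality, I would expand using bilinearity of the cup product:
\begin{align*}
[\chi_0+\chi_1,u_0u_1] &= (\chi_0+\chi_1)\cup(u_0+u_1)\\
&= [\chi_0,u_0]+[\chi_0,u_1]+[\chi_1,u_0]+[\chi_1,u_1],
\end{align*}
and then subtract $[\chi_0,u_0]$ and $[\chi_1,u_1]$ to recover $[\chi_0,u_1]+[\chi_1,u_0]$.

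The main obstacle is verifying that the commutator pairing of $\Gscr_\kappa$ is the standard symplectic form and bookkeeping the sign conventions: the symplectic form is alternating on the finite group, while $[\chi_0,u_1]+[\chi_1,u_0]$ is manifestly symmetric in the two arguments. The discrepancy is reconciled by graded commutativity of the cup product on degree one classes ($\chi\cup u=-u\cup\chi$ in $\H^2$), which converts the minus sign in the alternating form into a plus sign on cohomology. Once this bookkeeping is set up correctly, the rest is a formal computation in exact parallel to Proposition~\ref{prop:zarhin}.
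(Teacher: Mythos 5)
Your strategy matches the paper's: invoke the Zarhin-type principle from Proposition~\ref{prop:zarhin} to identify the polarization of the connecting map $(\chi,u)\mapsto[\chi,u]$ with the cohomological pairing induced by the commutator form of the relevant central extension, and then reduce the lemma to showing that this commutator form is the standard symplectic form on $\bZ/N\times\mu_N$. The bilinearity expansion giving the second equality is correct, and your remark that the alternating form becomes symmetric on $\H^1$ because degree-one cup products anticommute is exactly the right way to reconcile the signs.

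However, the step you yourself flag as the ``main obstacle'' is left unresolved. You write that $\Gscr_\kappa$ has the standard symplectic commutator because ``this is how $\kappa$ is characterized among central extensions with abelian quotient,'' but that is not a justification. A central extension of $\bZ/N\times\mu_N$ by $\Gm$ is not determined by its commutator pairing (every extension with abelian total group has trivial commutator but need not split), and even knowing that $\kappa$ generates the $\bZ/N$ summand of~\eqref{eq:brB} does not pin down which alternating form it produces, since different generators of that summand yield different nonzero multiples of the standard form. What is actually required is a direct verification on the extension itself: the cyclic algebra map is the coboundary for the Heisenberg extension obtained by pulling back $1\to\Gm\to\GL_N\to\PGL_N\to 1$ along the nontoral embedding $\bZ/N\times\mu_N\hookrightarrow\PGL_N$ from Section~\ref{sec:cyclic}, so one lifts the generators to $\GL_N$ --- $(1,1)$ to the cyclic shift matrix and $(0,\zeta)$ to the diagonal matrix with entries $1,\zeta,\ldots,\zeta^{N-1}$ --- and computes that their commutator is the central scalar $\zeta$. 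That explicit matrix calculation is the content of the paper's proof, and it is the piece your proposal omits.
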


\begin{proof}
  After Zarhin~\cite{zarhin}, the important thing to check is that
  pulling back the central extension
  $1\rightarrow\Gm\rightarrow\GL_N\rightarrow\PGL_N\rightarrow 1$
  along the homomorphism $\bZ/N\times\mu_N\hookrightarrow\PGL_N$
  yields a central extension
  $1\rightarrow\Gm\rightarrow\Hscr_N\rightarrow\bZ/N\times\mu_N\rightarrow
  1,$ whose associated $2$-cocycle determines the standard symplectic
  form. This verification is routine: one lifts elements of
  $\bZ/N\times\mu_N$ to elements of $\GL_N$ (which we have already
  done in the previous section) and then one computes the
  commutator.\footnote{The group $\Hscr_N$ appearing here is a
    Heisenberg group.} For example, the commutator between the matrix
    $$
    \begin{pmatrix}
            0   &   0   &   \cdots  &   0   &   1\\
            1   &   0   &   \cdots  &   0   &   0\\
            0   &   1   &   \cdots  &   0   &   0\\
            \vdots  &   \vdots &   \ddots & \vdots    &   \vdots\\
            0   &   0   &   \cdots  &   1   &   0\\
        \end{pmatrix}
    $$
    and the diagonal matrix $(1,\zeta,\ldots,\zeta^{N-1})$ is the
    constant diagonal matrix $\zeta$ in the center.
\end{proof}

Now, we give our version of the obstruction theory of O'Neil and Clark.

\begin{proposition}\label{prop:oneil}
    Let $\Lscr$ be a symmetric line bundle of degree $N$ on $E$ and
  suppose that $\varphi \colon E[N] \to \bZ/N\times\mu_N$ is a
  symplectic full level $N$ structure and that
  $Y\in\H^1(S,E[N])$ corresponds to the tuple
    $\H^1(\varphi)(Y)=(\chi,u)\in\H^1(S,\bZ/N)\times\H^1(S,\mu_N)$.  If $N$ is odd, then
  $$Ob_\Lscr(Y)=\tfrac{1}{2}[Y,Y]_N=[\chi,u]$$
  in $\H^2(S,\Gm)$.  If $N$ is even, then
  $$Ob_\Lscr(Y)=[\chi,u]+[\chi,v]+[\sigma,u]$$ for classes $v\in\H^1(S,\mu_2)$ and
    $\sigma\in\H^1(S,\bZ/2)$ that depend only on $E$, $N$, $\varphi$,
    and $\Lscr$.
\end{proposition}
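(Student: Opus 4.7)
The plan is to compare $Ob_\Lscr$ with the cup-product function $g(\chi,u) := [\chi,u]$ via the identification $\H^1(\varphi)\colon \H^1(S,E[N]) \iso \H^1(S,\bZ/N) \oplus \H^1(S,\mu_N)$, by showing both are quadratic functions with matching associated bilinear forms. The remarks preceding the statement already give that $Ob_\Lscr$ is quadratic with associated bilinear form the cohomological Weil pairing $[-,-]_N$, since $\Lscr$ is symmetric. For $g$, bilinearity of the cup product together with the identity $[n\chi,u^n] = n^2[\chi,u]$ show that $g$ is quadratic, with associated bilinear form $((\chi_0,u_0),(\chi_1,u_1)) \mapsto [\chi_0,u_1]+[\chi_1,u_0]$; this is precisely the pairing $[-,-]_\kappa$ appearing in Lemma~\ref{lem:cycliczarhin}. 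Since $\varphi$ is a symplectic full level $N$ structure, $\H^1(\varphi)$ carries $[-,-]_N$ to $[-,-]_\kappa$, so $Ob_\Lscr$ and $g \circ \H^1(\varphi)$ indeed share the same associated bilinear form.

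Consequently, $h(Y) := Ob_\Lscr(Y) - [\chi,u]$ is a quadratic function with vanishing bilinear form, hence an additive homomorphism satisfying $h(nY) = n^2 h(Y)$. Taking $n=-1$ gives $h(-Y)=h(Y)$, while additivity gives $h(-Y)=-h(Y)$, so $2h(Y)=0$, i.e., $h$ is $2$-torsion. For odd $N$, $h$ is also $N$-torsion (being an additive map from an $N$-torsion source), so $h=0$, yielding $Ob_\Lscr(Y)=[\chi,u]$. The identity $\tfrac{1}{2}[Y,Y]_N = [\chi,u]$ then follows either from the remark preceding the proposition (which gives $Ob_\Lscr(Y) = \tfrac{1}{2}[Y,Y]_N$ directly for odd $N$ and symmetric $\Lscr$) or from $[Y,Y]_N = [(\chi,u),(\chi,u)]_\kappa = 2[\chi,u]$ together with the invertibility of $2$ on the $N$-torsion subgroup of $\H^2(S,\Gm)$.

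For even $N$, I would interpret $h$ structurally. Pushing forward along $\varphi$ gives a central extension $\varphi_*\Gscr_\Lscr$ of $\bZ/N \times \mu_N$ by $\Gm$ whose commutator is the standard symplectic form, by symplecticity of $\varphi$. The Heisenberg extension $\Hscr_N$ appearing in Lemma~\ref{lem:cycliczarhin} has the same commutator, so the Baer difference $\varphi_*\Gscr_\Lscr - \Hscr_N$ is a commutative central extension. Such extensions are classified, via Cartier duality and the local-to-global spectral sequence for fppf $\Extscr^*$, by $\H^1(S,\mu_N) \oplus \H^1(S,\bZ/N)$; denote the resulting class by $(v_0,\sigma_0)$. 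The obstruction map of the corresponding commutative extension is $(\chi,u) \mapsto [\chi,v_0]+[\sigma_0,u]$, and combined with $Ob_{\Hscr_N}(\chi,u)=[\chi,u]$ (which is how cyclic algebras arise, as noted in Section~\ref{sec:cyclic}) this gives $Ob_\Lscr(Y) = [\chi,u] + [\chi,v_0] + [\sigma_0,u]$.

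The main obstacle is showing $(v_0,\sigma_0)$ is $2$-torsion, so that the classes descend to $v \in \H^1(S,\mu_2)$ and $\sigma \in \H^1(S,\bZ/2)$. Because $\Lscr$ is symmetric, Mumford's involution $\delta_{-1}$ endows $\Gscr_\Lscr$ with a $[-1]$-equivariant structure covering inversion on $E[N]$; the Heisenberg $\Hscr_N$ carries an analogous involution covering $(a,\zeta) \mapsto (-a,\zeta^{-1})$ on $\bZ/N \times \mu_N$. The Baer difference inherits a $[-1]$-equivariant structure, and the induced $[-1]$-action on the classifying group $\H^1(S,\mu_N) \oplus \H^1(S,\bZ/N)$ is negation, so $(v_0,\sigma_0)$ is fixed by negation and hence $2$-torsion, factoring through the inclusions $\mu_2 \hookrightarrow \mu_N$ and $\bZ/2 \hookrightarrow \bZ/N$. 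Naturality of the Baer difference construction in $\varphi, E, N, \Lscr$ ensures that $v$ and $\sigma$ depend only on these data, completing the argument.
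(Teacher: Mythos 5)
Your proposal is correct, and it takes a genuinely different route from the paper's argument.  The paper's proof identifies $Ob_\Lscr$ with a Brauer class on the classifying stack $\B(\bZ/N\times\mu_N)$ via $\B(\varphi)^{-1\ast}$, decomposes $\H^2(\B(\bZ/N\times\mu_N),\Gm)$ as in~\eqref{eq:brB} into $\bZ/N\oplus\H^1(S,\bZ/N\times\mu_N)\oplus\H^2(S,\Gm)$, reads off the structural form~\eqref{eq:assignment}, and then pins down each summand ($\alpha=0$ from $Ob_\Lscr(0)=0$; $n\equiv 1$ from matching symplectic pairings; $v,\sigma$ $2$-torsion from quadraticity).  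Your argument avoids the decomposition entirely in the odd case: matching the bilinear forms of $Ob_\Lscr$ and $(\chi,u)\mapsto[\chi,u]$ under the symplectic identification shows the difference $h$ is additive and $n^2$-homogeneous, hence $2$-torsion, hence zero since $h$ also kills $N$ -- this is cleaner and more elementary than the paper's computation.  In the even case you replace the stack-theoretic decomposition with a Baer difference of the central extensions $\varphi_*\Gscr_\Lscr$ and $\Hscr_N$: because both have the standard symplectic commutator, the difference is a commutative extension classified by $\Ext^1_S(\bZ/N\times\mu_N,\Gm)\iso\H^1(S,\mu_N)\oplus\H^1(S,\bZ/N)$, which recovers the same $(v_0,\sigma_0)$-data without invoking~\eqref{eq:brB}; the advantage is that the absence of the $\bZ/N$- and $\H^2(S,\Gm)$-summands is automatic rather than needing to be argued.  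For the $2$-torsion claim you use Mumford's involution $\delta_{-1}$ and $[-1]$-equivariance of the Baer difference instead of the paper's polarization identity $Ob_\Lscr(2Y)=4Ob_\Lscr(Y)$; both are quadraticity in disguise, but your version makes the geometric origin of the $2$-torsion transparent.  Two steps are sketched rather than proved: that coboundary maps are additive under Baer sum of (possibly noncommutative) central extensions, and that the boundary map of a commutative extension of $\bZ/N\times\mu_N$ by $\Gm$ classified by $(v_0,\sigma_0)$ is $(\chi,u)\mapsto[\chi,v_0]+[\sigma_0,u]$ -- the latter is a $\Gm$-coefficient variant of Lemma~\ref{lem:coboundary} and needs the local-to-global identification of $\Ext^1_S$ you quote.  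Both are true, but the paper effectively bundles these verifications into the assertion of~\eqref{eq:brB} and~\eqref{eq:assignment}, so the two proofs are at a comparable level of rigor.
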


\begin{proof}
  The functorial assignment
  $Ob_\Lscr\colon\H^1(S,E[N])\rightarrow\H^2(S,\Gm)$ corresponds by Yoneda
  to a Brauer class on the classifying stack $\B (E[N])$.  A full level
  $N$ structure $\varphi\colon E[N] \to \bZ/N \times \mu_N$ induces an
  equivalence of stacks
  $\B(\varphi) \colon \B (E[N]) \to \B (\bZ/N \times \mu_N)$.  Applying
  $\B(\varphi)^{-1\ast}$ to the Brauer class on $\B(E[N])$
  corresponding to $Ob_\Lscr$, we obtain a class $\B(\varphi)^{-1\ast}(Ob_\Lscr)$
  in $\H^2(\B(\bZ/N \times \mu_N), \Gm)$, and we wish to view this
  class with respect to the decomposition~\eqref{eq:brB}.

  Given a Brauer class on $\B(\bZ/N\times\mu_N)$ determined via~\eqref{eq:brB}
    by an integer $n\pmod N$ and classes $v\in\H^1(S,\mu_N)$,
  $\sigma\in\H^1(S,\bZ/N)$, and $\alpha\in\H^2(S,\Gm)$, the associated
  functorial assignment $\H^1(S,\bZ/N \times \mu_N)\rightarrow\H^2(S,\Gm)$ is
\begin{equation}
\label{eq:assignment}
(\chi, u) \mapsto n[\chi,u] + [\chi,v] + [\sigma, u] + p^*\alpha
\end{equation}
where $p\colon \B (\bZ/N \times \mu_N) \to S$ is the structure morphism
and where we consider the cup products via the natural map
$\H^2(S,\mu_N) \to \H^2(S,\Gm)$.

We now consider the representation of $\B(\varphi)^{-1\ast}(Ob_\Lscr)$ with
respect to \eqref{eq:assignment}.  Since $Ob_\Lscr(0)=0$, we see that
$\alpha=0$. Since $\varphi$ is symplectic, the cohomological pairing
    induced by $\B(\varphi)^{-1\ast}(Ob_\Lscr)$, which is
    $$n[\chi_0,u_1]+n[\chi_1,u_0]$$ by Proposition~\ref{prop:zarhin}
    and~\eqref{eq:assignment} (since the linear part does not contribute), must agree with that induced by the
standard symplectic pairing on $\bZ/N\times\mu_N$, which is
    $$((\chi_0,u_0),(\chi_1,u_1))=[\chi_0+\chi_1,u_0+u_1]-[\chi_0,u_0]-[\chi_1,u_1]=[\chi_0,u_1]+[\chi_1,u_0]$$
    by Lemma~\ref{lem:cycliczarhin}.
    From this, we see that $n\equiv 1\pmod N$.
    Since $Ob_\Lscr$ is quadratic because $\Lscr$ is symmetric, we see that if $N$ is odd, then
$v=\sigma=0$. If $N$ is even, then we see that
$[2\chi,u^2]+[2\chi,v]+[\sigma,u^2]=4([\chi,u]+[\chi,v]+[\sigma,u])$,
which after cancellation gives $2([\chi,v]+[\sigma,u])=0$.  Since this
is true no matter the choice of $\chi$ and $u$, and even remains true after
    arbitrary field extensions, we see that $v$ and
$\sigma$ must be $2$-torsion, which completes the proof.
\end{proof}

The proof of the following refinement of Proposition~\ref{prop:oneil} is given
in~\cite{oneil:erratum}.

\begin{proposition}\label{prop:oneilerratum}
  Let $E$ be an elliptic curve with a symplectic full level $N$
  structure $\varphi\colon E[N]\rightarrow\bZ/N\times\mu_N$ and let
  $P = \varphi^{-1}(1,1)$. If $\Lscr = \Oscr(0_E + P + \dotsm + (N-1)P)$
  then the corresponding class $v \in \H^1(S,\mu_N)$ in
  Proposition~\ref{prop:oneil} vanishes. In particular, in the
  notation of that proposition,
    $$Ob_\Lscr(Y)=[\chi+\sigma,u]$$
  for some $\sigma \in \H^1(S,\bZ/N)$ that is trivial whenever $N$ is odd.
\end{proposition}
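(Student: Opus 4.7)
The plan is to show that for the specified line bundle $\Lscr = \Oscr(0_E + P + \dotsm + (N-1)P)$, the theta group extension $1 \to \Gm \to \Gscr_\Lscr \to E[N] \to 0$ admits a canonical splitting over the cyclic subgroup $\langle P\rangle = \varphi^{-1}(\bZ/N \times 1) \subset E[N]$. The decisive observation is that the divisor $D = 0_E + P + \dotsm + (N-1)P$ is preserved literally (not merely up to linear equivalence) under translation by $P$: indeed $t_P^* D$ consists of the points $\{-P,\, 0_E,\, P,\, \dotsc,\, (N-2)P\}$, and since $P$ has order exactly $N$ we have $-P = (N-1)P$, so $t_P^* D = D$ as effective divisors. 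This yields tautological identifications $\psi_i\colon \Lscr \xrightarrow{=} t_{iP}^* \Lscr$ for every $i$, and the required cocycle condition $(t_{iP}^*\psi_j)\circ \psi_i = \psi_{i+j}$ is immediate from transitivity of divisor equalities. Hence the assignment $iP \mapsto (iP, \psi_i)$ is a genuine group homomorphism $\langle P\rangle \to \Gscr_\Lscr$ lifting the inclusion into $E[N]$.

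Given this splitting, the nonabelian boundary $Ob_\Lscr\colon \H^1(S, E[N]) \to \H^2(S, \Gm)$ vanishes on the image of $\H^1(S, \langle P\rangle) \to \H^1(S, E[N])$, i.e.\ on every $Y$ with $\H^1(\varphi)(Y) = (\chi, 1)$. Substituting into the formula from Proposition~\ref{prop:oneil}, this gives
$$0 \;=\; Ob_\Lscr(Y) \;=\; [\chi, 1] + [\chi, v] + [\sigma, 1] \;=\; [\chi, v]$$
for all $\chi \in \H^1(S, \bZ/N)$. Running the same Yoneda argument that isolated the coefficients $n, v, \sigma, \alpha$ in the proof of Proposition~\ref{prop:oneil}, namely viewing the functorial assignment $\chi \mapsto [\chi, v]$ as a class on $\B(\bZ/N)\times_S S$ whose vanishing is equivalent to $v=0$, we conclude that $v$ vanishes. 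Plugging $v=0$ back into the expression of Proposition~\ref{prop:oneil} produces $Ob_\Lscr(Y) = [\chi, u] + [\sigma, u] = [\chi + \sigma, u]$, and $\sigma$ is already known to be $2$-torsion, so it vanishes when $N$ is odd.

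The main obstacle is the fppf-local bookkeeping: the section $P = \varphi^{-1}(1,1)$ only exists as a global $S$-point of $E[N]$ after trivializing $\mu_N$ (i.e.\ producing a global primitive $N$th root of unity), and the line bundle $\Lscr = \Oscr(0_E + P + \dotsm + (N-1)P)$ is similarly only defined fppf-locally in general. One must therefore work fppf-locally on $S$ to construct the splitting of $\Gscr_\Lscr$ over $\langle P\rangle$ and then descend the conclusion $v=0$ back to $S$, using that the coefficients $v$ and $\sigma$ in Proposition~\ref{prop:oneil} are uniquely determined by the Brauer class $\B(\varphi)^{-1\ast}(Ob_\Lscr) \in \H^2(\B(\bZ/N\times\mu_N), \Gm)$ and hence behave well under faithfully flat base change.
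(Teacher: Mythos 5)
Your proof is correct and rests on the same key observation as the paper: the divisor $D = 0_E + P + \cdots + (N-1)P$ is \emph{strictly} invariant under $t_P$, which canonically splits the theta group $\Gscr_\Lscr$ over $\langle P\rangle$; this is equivalent to the paper's computation that the lift $\tau_P$ of $\rho(P)$ can be chosen with $\tau_P^N = I_N$ (the paper exhibits $\tau_P$ as a cyclic permutation on a direct-sum decomposition of $p_*\Lscr$, while you invoke the tautological identifications $\psi_i\colon\Lscr \to t_{iP}^*\Lscr$, which is arguably cleaner). Your closing worry about fppf descent is unnecessary: $(1,1) \in (\bZ/N\times\mu_N)(S)$ uses the identity section of $\mu_N$, not a primitive $N$th root of unity, so $P = \varphi^{-1}(1,1)$ and $\Lscr$ are already defined globally over $S$.
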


\begin{proof}
  Arguing as in Section~\ref{sec:cyclic}
  or~\cite[Sec.~4.3]{lieblich-surface}, we see that
  $\H^2(\B(\bZ/N),\Gm)\iso\H^1(S,\mu_N)\oplus\H^2(S,\Gm)$. This is
  compatible with the computation of the cohomology of
  $\B(\bZ/N\times\mu_N)$ of~\eqref{eq:brB}, which means that we can
  find $v$ by pullback of $\B(\varphi)^{-1}(Ob_\Lscr)$ from
  $\B(\bZ/N\times\mu_N)$ to $\B(\bZ/N)$.  The Brauer class
  corresponding to $v$ on $\B(\bZ/N)$ is that of the projective
  representation $\rho \colon \bZ/N\rightarrow\PGL_N^\Lscr$ induced by
  $\bZ/N\times
  1\xrightarrow{\varphi^{-1}}E[N]\hookrightarrow\PGL_N^\Lscr$.

  In general, computing the Brauer class on $\B(\bZ/N)$ from the
  projective representation is possible as follows.  Let
  $t_P \colon E \to E$ denote translation by $P$ and choose an
  isomorphism $t_P^*\Lscr\iso\Lscr$. Pulling back by $t_P$ gives an
  invertible map
  $\tau_P \colon p_\Lscr\rightarrow p_*t_P^*\Lscr\iso p_*\Lscr$ that
  lifts $\rho(P)$ to $\GL_N^\Lscr$ via
  $\GL_N^\Lscr\to\PGL_N^\Lscr$. As $\rho(P)$ has order $N$ in
  $\PGL_N^\Lscr$, the $N$th power of $\tau_P$ is a unit multiple of
  the identity $b\cdot I_N$ since the center of $\GL_N^\Lscr$ is still
  $\Gm$. The class of $b$ in $\H^1(S,\mu_N)$ is then the Brauer class
  on $\B(\bZ/N)$ corresponding to $\rho$.  To see this, note that
  $\rho$ lifts to a representation $\bZ/N\rightarrow\GL_N^\Lscr$ if
  and only if we can rescale $\tau_P$ by a unit to obtain $\tau'_P$
  such that $(\tau'_P)^N=I_N$. If $b=c^N$, then $\tau'_P=c^{-1}\tau_P$
  works, and conversely.

  We have to show that in the setup of the proposition, this $b$ is
  itself an $N$th power,
  cf.~\cite[Corollary~2.8]{oneil-jacobians}. The line bundle $\Lscr$
  is the divisor class of the divisor $0_E+P+\cdots+(N-1)P$. This
  divisor is in fact invariant by translation by $P$ so there is a
  preferred identification of $\Lscr$ and $t_P^\ast\Lscr$.
  Specifically,
  $\Lscr\we\Oscr(0_E)\otimes\Oscr(P)\otimes\cdots\otimes\Oscr((N-1)P)$
  and $t_P^\ast\Lscr$ is a cyclic permutation. In particular, we can
  arrange for $\tau_P$ to act on
  $$p_*\Lscr\iso p_*\Oscr(0_E)\oplus p_*\Oscr(P)\oplus\cdots\oplus p_*\Oscr((N-1)P)$$
  via a cyclic permutation (we use here that the genus of $E$ is $1$
  over $S$), which has associated unit element $b=1$. This completes
  the proof.
\end{proof}

When $E$ has full rational $N$-torsion over a field $k$ (necessarily
containing a primitive $N$th root of unity),
O'Neil~\cite[Prop.~3.4]{oneil} states the first part of
Proposition~\ref{prop:oneil}, but for any $N$, and then provides a
correction in~\cite{oneil:erratum} which amounts to
Proposition~\ref{prop:oneilerratum}.  Around the same time, again
assuming that $E$ has full rational $N$-torsion,
Clark~\cite[Sec.~3.2]{clark-wc1} states the second part of the result,
but for all $N$, while pointing out the issue with $N=2$ that was
uncovered by a referee, which implies that in general $v$ and $\sigma$
are non-zero for the obstruction map $Ob$ corresponding to
$\Oscr(N0_E)$. Our version of the result, while holding over a general
base $S$, utilizes our more general notion of full level $N$
structure, which in particular does not presuppose the presence of
$N$th roots of unity.

\begin{remark}
  Using oriented automorphisms of $p_*\Lscr$, one can obtain
  an oriented theta group, which is an extension of $E[N]$ by
  $\mu_N$. Then, the definition of the Weil pairing, the
  period-obstruction map, and the above lemma carry over with values
  in $\H^2(S,\mu_N)$ as well.  Specifically, we can pull back the
  extension
    $$1\rightarrow\mu_N\rightarrow\SL_N^\Lscr\rightarrow\PGL_N^\Lscr\rightarrow
    1$$ along
    $E[N]\rightarrow\PGL_N^\Lscr$. Propositions~\ref{prop:zarhin},
    \ref{prop:oneil}, and~\ref{prop:oneilerratum} now hold with target in $\H^2(S,\mu_N)$
    instead of $\H^2(S,\Gm)$ with the exception that in the proofs of
    Propositions~\ref{prop:oneil} and~\ref{prop:oneilerratum}, in order to conclude that the
    contribution of the constant class classes $\H^2(S,\mu_N)$ to $Ob_\Lscr$
    vanishes, we need to know that $c_1(p_*\Lscr)=0$, which is always
    true locally on $S$. The details are left to the reader.
\end{remark}

\begin{warning}
    We will use Proposition~\ref{prop:oneilerratum} in the proof of
    Theorem~\ref{thm:c}, but note that we cannot improve the statement to split
    $\mu_N$-gerbes, despite the previous remark. This is because a later
    result, Lemma~\ref{lem:pio}, does not apply to $\mu_N$-cohomology classes,
    as Example~\ref{ex:bigprimes} shows.
\end{warning}

\section{Proofs}\label{sec:proofs}

\subsection{Splitting $\mu_N$-gerbes}
\label{subsec:splitting-mu_N}

As explained in the introduction, we are interested in the following
general question, which motivates our approach to the problem of splitting
Brauer classes by genus 1 curves.

We first recall that, for any proper geometrically connected scheme
$X$ over a field $k$, a consequence of the long exact sequence of
cohomology associated to the Kummer sequence are the exact sequences:
$$
0 \to k^\times/(k^\times)^N \to \H^1(X,\mu_N) \to \Pic(X)[N] \to 0,
$$
$$
0 \to \Pic(X)/N\Pic(X) \to \H^2(X,\mu_N) \to \H^2(X,\Gm)[N] \to 0.
$$
When $X$ is projective, the right-most term in the second statement is
$\Br(X)[N]$ via Gabber's
$\Br=\Br'$ result; see~\cite{dejong}.
In particular, we can view classes in $\H^2(X,\mu_N)$ as lifts of
$N$-torsion Brauer classes.

\begin{question}
  Let $k$ be a field and $\beta \in \H^2(\Spec k,\mu_N)$. Is
  there a smooth proper geometrically connected $k$-scheme $X$ such
  that $\beta$ pulls back to zero in $\H^2(X,\mu_N)$?  
\end{question}

A class $\beta \in \H^2(\Spec k,\mu_N)$ can be considered as the cohomology
class associated to a $\mu_N$-gerbe, hence if it pulls back to zero in
$\H^2(X,\mu_N)$ then we say that $X$ splits the $\mu_N$-gerbe $\beta$.
If we drop the hypothesis that $X$ is geometrically connected, we can
of course just take $X$ to be the spectrum of a finite field extension
that splits $\beta$.  One might be tempted to take $X$ to be a
Severi--Brauer variety associated to $\beta$, but
Lemma~\ref{lem:never_split} below shows that these varieties do not split
$\mu_N$-gerbes.

To think about this question, consider the Leray--Serre spectral
sequence 
$$
\E_2^{s,t}=\H^s(\Spec k,\R p_*^t\mu_n)\Longrightarrow\H^{s+t}(X,\mu_n)
$$ 
for the sheaf $\mu_N$ associated to the structure morphism
$p\colon X \to \Spec k$.  We have $\R p_*^0\mu_n\iso\mu_n$ since $X$ is
geometrically connected and $\R p_*^1\mu_n\iso\Pic_{X/k}[n]$ since $X$
is proper over $k$.  Hence the exact sequence of low-degree terms, together
with the exact sequence in degree 1 recalled above, reads as
\begin{equation}
\label{eq:Leray_mu_N}
0 \to \Pic(X)[N] \to \Pic_{X/k}[N](k) \xto{\d_2^{0,1}} \H^2(\Spec k,\mu_N) \to \H^2(X,\mu_N)
\end{equation}
where $\d_2^{0,1}$ is the appropriate differential and the final map
is the pull back in cohomology.  This exact sequence shows that
$X$ splits the $\mu_N$-gerbe $\beta$ if and only if $\beta$ arises as
the differential of an $N$-torsion point on the Picard scheme
$\Pic_{X/k}$.  In particular, we have the following.

\begin{lemma}
\label{lem:never_split}
  Let $X$ be smooth proper geometrically connected $k$-scheme. The
  pullback map $\H^2(\Spec k,\mu_n)\rightarrow\H^2(X,\mu_n)$ is
  injective, in other words, $X$ does not split any nontrivial
  $\mu_N$-gerbe over $k$, if and only if the differential
  $d_2^{0,1}\colon \Pic_{X/k}[N](k) \to \H^2(k,\mu_N)$ vanishes.
\end{lemma}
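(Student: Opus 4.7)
The lemma is essentially a direct consequence of the five-term exact sequence \eqref{eq:Leray_mu_N} already established in the excerpt, so the plan is simply to read off the statement from that sequence.

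The plan is to recall the five-term exact sequence
\[
0 \to \Pic(X)[N] \to \Pic_{X/k}[N](k) \xto{\d_2^{0,1}} \H^2(\Spec k,\mu_N) \to \H^2(X,\mu_N)
\]
derived from the Leray--Serre spectral sequence for $p\colon X \to \Spec k$ with coefficients $\mu_N$, using that $\R^0 p_*\mu_N \iso \mu_N$ (from geometric connectedness) and that $\R^1 p_*\mu_N \iso \Pic_{X/k}[N]$ (from properness and the Kummer sequence). By exactness, the kernel of the pullback map $\H^2(\Spec k,\mu_N)\rightarrow\H^2(X,\mu_N)$ is precisely the image of the differential $\d_2^{0,1}$. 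Hence the pullback is injective if and only if the image of $\d_2^{0,1}$ is zero, which is the same as saying $\d_2^{0,1}$ itself vanishes. Translating via the definition of splitting a $\mu_N$-gerbe gives the stated equivalence.

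There is no real obstacle here; the only thing worth double-checking is that the five-term sequence has been identified correctly, i.e.\ that the map $\H^2(\Spec k,\mu_N) \to \H^2(X,\mu_N)$ appearing as the final arrow is indeed the pullback along $p$, and that the edge map $\E_2^{0,1}\to \E_2^{2,0}$ is genuinely the obstruction $\d_2^{0,1}$ computing which classes of $\Pic_{X/k}[N](k)$ lift to honest line bundles on $X$. Both of these are standard properties of the Leray--Serre spectral sequence, so the proof consists of little more than quoting the exact sequence \eqref{eq:Leray_mu_N} and invoking exactness.
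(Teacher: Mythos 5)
Your proof is correct and matches the paper's approach exactly: the paper also deduces the lemma directly from the five-term exact sequence \eqref{eq:Leray_mu_N}, observing that the kernel of the pullback equals the image of $\d_2^{0,1}$.
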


As a special case, the lemma shows that if $\Pic_{X/k}[N](k)=0$, then
$\H^2(\Spec k,\mu_N)\rightarrow\H^2(X,\mu_N)$ is injective and hence
$X$ does not split any $\mu_N$-gerbe.  This condition holds, in
particular, whenever the Picard group of $X$ is geometrically
torsion-free, e.g., for K3 surfaces, Severi--Brauer varieties, and for
smooth complete intersections of dimension at least $2$.

Another special case is that of Enriques surfaces $X$: if $X$ is
classical (e.g., if the characteristic of $k$ is not $2$), then
$\Pic(X)[2]\iso\Pic_{X/k}[2](k)\iso\bZ/2$ is generated by the
canonical line bundle; if $X$ is singular or supersingular in characteristic
$2$, then
$\Pic_{X/k}[2]\iso\mu_2$ or $\alpha_2$, respectively, so the group of
$k$-rational points is $0$ (see~\cite{bombieri-mumford-2}). In either
case, $\d_2^{0,1}=0$ so Enriques surfaces cannot split $\mu_2$-gerbes.

We will now give some examples where genus $1$ curves do split
$\mu_N$-gerbes and others where they split Brauer classes but not
their associated $\mu_N$-gerbes.  More subtly, we will give examples
where a genus $1$ curve $X$ does not split any $\mu_N$-gerbe lifting
$\alpha$ of period $N$, but that $X$ does split some $\mu_{NM}$-gerbe
lifting $\alpha$ for $M > 1$.  Many of these
examples rely on the following, cf.\
\cite[Theorem~2.1]{ciperiani-krashen}.

\begin{lemma}
\label{lem:splitting}
Let $X$ be a genus $1$ curve over a field $k$ with Jacobian
$\Pic_{X/k}^0$. There is a commutative diagram
    $$\xymatrix@R=16pt@C=16pt{
        &0\ar[d]&0\ar[d]&\\
        0\ar[r]&\Pic^0(X)\ar[r]\ar[d]&\Pic_{X/k}^0(k)\ar[r]\ar[d]&\Br(k)\ar[d]\\
        0\ar[r]&\Pic(X)\ar[r]\ar[d]&\Pic_{X/k}(k)\ar[r]\ar[d]&\Br(k)\\
        &\bZ\cdot\ind(X)\ar[d]\ar[r]&\bZ\cdot\per(X).\ar[d]&\\
        &0&0
    }$$
    In particular, letting $\Br^0(X/k)=\im(\Pic_{X/k}^0(k)\rightarrow\Br(k))$ and
    $\Br(X/k)=\im(\Pic_{X/k}(k)\rightarrow\Br(k))=\ker(\Br(k)\rightarrow\Br(X))$,
    there is an exact sequence
    \begin{equation}\label{eq:perind}
    0\rightarrow\Br^0(X/k)\rightarrow\Br(X/k)\rightarrow\bZ/(\ind(X)/\per(X))\rightarrow 0.
    \end{equation}
\end{lemma}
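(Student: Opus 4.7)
The strategy is to construct the commutative diagram from the Leray spectral sequence for $p\colon X\to\Spec k$ and $\Gm$, restrict it to degree-zero classes, and then extract \eqref{eq:perind} by applying the snake lemma to the top two rows.

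First, I would obtain the middle row from the low-degree exact sequence of the Leray spectral sequence $\E_2^{s,t}=\H^s(\Spec k,\R^t p_*\Gm)\Rightarrow\H^{s+t}(X,\Gm)$: since $\R^0 p_*\Gm\iso\Gm$ ($X$ is geometrically connected), $\R^1 p_*\Gm\iso\Pic_{X/k}$ ($X$ is proper), and $\H^1(\Spec k,\Gm)=0$ by Hilbert 90, this yields
$$
0\to\Pic(X)\to\Pic_{X/k}(k)\xto{\d_2}\Br(k)\to\Br(X).
$$
The top row is then the restriction to degree-zero classes: any divisor class on $X$ whose image in $\Pic_{X/k}(k)$ lands in $\Pic_{X/k}^0(k)$ has geometric degree $0$ and so lies in $\Pic^0(X)$. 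Commutativity of the two upper squares is automatic from naturality.

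Next, I would identify the terms appearing in the bottom row of the diagram. For the left column, the image of the degree map $\Pic(X)\to\bZ$ is by definition $\ind(X)\cdot\bZ$. For the middle column, the short exact sequence $0\to\Pic_{X/k}^0\to\Pic_{X/k}\to\underline{\bZ}\to 0$ of fppf sheaves on $\Spec k$ (the quotient being constant since the geometric Néron--Severi group of a genus $1$ curve is $\bZ$ with trivial Galois action, generated by the class of any closed point) yields
$$
0\to\Pic_{X/k}^0(k)\to\Pic_{X/k}(k)\to\bZ\xto{\delta}\H^1(\Spec k,\Pic_{X/k}^0).
$$
The key computation is that $\delta(1)=[X]$ in the Weil--Châtelet group of the Jacobian, so by definition of the period the image of $\Pic_{X/k}(k)$ is exactly $\per(X)\cdot\bZ$. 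Commutativity of the remaining squares is again automatic.

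To extract \eqref{eq:perind}, I would then apply the snake lemma to the pair of short exact sequences
$$
0\to\Pic^0(X)\to\Pic_{X/k}^0(k)\to\Br^0(X/k)\to 0, \quad 0\to\Pic(X)\to\Pic_{X/k}(k)\to\Br(X/k)\to 0
$$
obtained from the top two rows of the diagram by replacing $\Br(k)$ by the image of the preceding map, linked by the evident vertical inclusions. Since all three vertical maps are injective, the snake sequence collapses to
$$
0\to\Pic(X)/\Pic^0(X)\to\Pic_{X/k}(k)/\Pic_{X/k}^0(k)\to\Br(X/k)/\Br^0(X/k)\to 0,
$$
which after substituting $\ind(X)\cdot\bZ$ and $\per(X)\cdot\bZ$ for the first two quotients gives exactly \eqref{eq:perind}. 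The argument is largely formal and diagrammatic; the one substantive input is the identification of the image of $\Pic_{X/k}(k)$ in the Néron--Severi group as $\per(X)\cdot\bZ$, which is where the bulk of the care is required.
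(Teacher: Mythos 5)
The paper does not give a proof of this lemma, instead citing \cite[Theorem~2.1]{ciperiani-krashen}; your argument via the Leray spectral sequence for $\Gm$, the degree exact sequences, and the snake lemma is correct and is essentially the standard derivation. The only point worth flagging explicitly is the identification of the boundary class: you invoke $\delta(1)=[X]$ to conclude that the image of $\Pic_{X/k}(k)\to\bZ$ is $\per(X)\cdot\bZ$, and the way to see this cleanly is that $\delta(n)$ is the class of the torsor $\Pic^n_{X/k}$, and the canonical Abel--Jacobi map $X\to\Pic^1_{X/k}$ is an isomorphism of $E$-torsors for a genus $1$ curve, so $\delta$ sends $1$ to $[X]$ and its kernel is $\per(X)\cdot\bZ$ by definition of the period. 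Everything else, including the final snake-lemma step collapsing to $0\to\Br^0(X/k)\to\Br(X/k)\to(\per(X)\bZ)/(\ind(X)\bZ)\to 0$, is correct as written.
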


\begin{corollary}
\label{ex:rank0}
Let $E$ be an elliptic curve over a field $k$ such that $E(k)$ is an
$N$-torsion group. Let $X$ be an $E$-torsor such that
$\per(X)=\ind(X)$.  If $X$ splits a Brauer class
$\alpha \in \Br(k)[N]$, then it splits the unique lift
$\beta \in \H^2(\Spec k,\mu_N)$ of $\alpha$.
\end{corollary}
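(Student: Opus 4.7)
The plan is to combine the exact sequence \eqref{eq:perind} from Lemma~\ref{lem:splitting} with the Leray-Serre analysis used to derive \eqref{eq:Leray_mu_N}, exploiting the hypothesis on $E(k)$ to force the relevant class in $\Pic_{X/k}(k)$ to be $N$-torsion.

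First I would record why the lift $\beta$ is indeed unique: since $k$ is a field, $\Pic(\Spec k) = 0$, so the Kummer exact sequence $0 \to \Pic(\Spec k)/N \to \H^2(\Spec k,\mu_N) \to \Br(k)[N] \to 0$ collapses to an isomorphism $\H^2(\Spec k,\mu_N) \iso \Br(k)[N]$. Thus $\beta$ is determined by $\alpha$.

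Next, by the functoriality of the Leray-Serre spectral sequence applied to $p\colon X \to \Spec k$ with respect to the inclusion $\mu_N \hookrightarrow \Gm$, the differentials fit into a commutative square
$$
\xymatrix{
\Pic_{X/k}[N](k) \ar[r]^-{\d_2^{0,1}} \ar@{^{(}->}[d] & \H^2(\Spec k,\mu_N) \ar[d]^{\iso} \\
\Pic_{X/k}(k) \ar[r] & \Br(k),
}
$$
where the bottom arrow is the map in Lemma~\ref{lem:splitting}. By the sequence \eqref{eq:Leray_mu_N}, $X$ splits $\beta$ if and only if $\beta$ lies in the image of $\d_2^{0,1}$.

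Now I would apply the hypotheses. Since $X$ splits $\alpha$, we have $\alpha \in \Br(X/k)$, and since $\per(X) = \ind(X)$ the exact sequence \eqref{eq:perind} collapses to $\Br^0(X/k) = \Br(X/k)$. Therefore $\alpha$ lies in the image of $\Pic_{X/k}^0(k) \to \Br(k)$; pick $\Lscr \in \Pic_{X/k}^0(k)$ mapping to $\alpha$. Because $X$ is an $E$-torsor we have the canonical isomorphism $\Pic_{X/k}^0 \iso E$ (the Jacobian), so the group $\Pic_{X/k}^0(k)$ is identified with $E(k)$, which by assumption is killed by $N$. Hence $\Lscr$ is $N$-torsion and lies in $\Pic_{X/k}[N](k)$.

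Tracing $\Lscr$ through the commutative square, its image $\d_2^{0,1}(\Lscr) \in \H^2(\Spec k,\mu_N)$ maps to $\alpha$ in $\Br(k)$; by the uniqueness of the lift this forces $\d_2^{0,1}(\Lscr) = \beta$, so $\beta$ lies in the image of $\d_2^{0,1}$ and $X$ splits the $\mu_N$-gerbe $\beta$. The only minor point to verify carefully is the commutativity of the square relating the two $\d_2^{0,1}$'s; this is a general functoriality of Leray-Serre with respect to the map of sheaves $\mu_N \to \Gm$, so it will not be a serious obstacle.
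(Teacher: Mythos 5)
Your proof is correct and follows essentially the same route as the paper's: use Lemma~\ref{lem:splitting} together with the hypothesis $\per(X)=\ind(X)$ to force $\alpha\in\Br^0(X/k)$, lift it to a class in $\Pic_{X/k}^0(k)\iso E(k)$, use the $N$-torsion hypothesis to land in $\Pic_{X/k}[N](k)$, and invoke compatibility of the Leray--Serre spectral sequences for $\mu_N$ and $\Gm$. The only difference is presentational — you spell out the uniqueness of the lift $\beta$ and the identification $\Pic_{X/k}^0\iso E$ a bit more explicitly than the paper does.
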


\begin{proof}
  Indeed, by Lemma~\ref{lem:splitting} we see that
  $\alpha\in\Br^0(X/k)$, hence $\alpha = \d^{0,1}_2(P)$ for some point
  $P \in E(k) = \Pic_{X/k}^0(k)$.  However, since $E(k)$ is an
  $N$-torsion group, $P \in \Pic_{X/k}[N](k)$.  Compatibility of the
  Leray--Serre spectral sequences for $\mu_{N}$ and $\Gm$-cohomology
  provides a commutative diagram
    $$\xymatrix@R=16pt{
      \Pic_{X/k}[N](k) \ar[r]^(.45){\d_2^{0,1}}\ar[d]&\H^2(\Spec k,\mu_{N})\ar[dd]\\
      \Pic_{X/k}^0(k)  \ar[d]&\\
      \Pic_{X/k}(k) \ar[r]^(.45){\d_2^{0,1}}&\H^2(\Spec k,\Gm) }$$
    showing that $\beta$, considered as an element of
    $\H^2(\Spec k, \mu_{N})$, is in the image of a class from
    $\Pic_{X/k}[N](k)$, so that $X$ splits the $\mu_{N}$-gerbe
    $\beta$.  
\end{proof}

\begin{example}
If $k$ has characteristic not 2, then the quaternion algebra $[a,b]$ is split by
the genus $1$ curve $X$ defined by
$$y^2=ax^4+b.$$ 
Geometrically, one can see this since $(x,y) \mapsto (x^2,y)$
determines a (finite degree 2) morphism from $X$ to the conic
$y^2=ax^2+b$ determined by $[a,b]$.  The relative Brauer group
$\Br(X/k)$ was studied by \cite{han} and
\cite[Sec.~5.2.1]{ciperiani-krashen} in terms of the Jacobian $E$ of
$X$, which is defined by $y^2=x^3-4abx$, giving examples of the
phenomenon in Corollary~\ref{ex:rank0}.  Here, when $[a,b]$ is
nonsplit, one has $\per(X)=\ind(X)=2$ and in general one can find
examples where $E(k)$ is $2$-torsion. For example, this is the case
for $a=2$ and $b=3$ where the Jacobian is given up to isomorphism by
$y^2=x^3-24x$, the curve \texttt{2304.n1} from
\texttt{lmfdb.org}~\cite{lmfdb}, which has $E(\bQ)\iso\bZ/2$.  Thus,
the $\mu_2$-gerbe $2\cup 3\in\H^2(\Spec\bQ,\mu_2)$ is split by the genus
$1$ curve that is the unique smooth
proper model of $y^2=2x^4+3$.
\end{example}

\begin{example}
  An analysis similar to the proof of Corollary~\ref{ex:rank0} shows
  that if $X$ is an $E$-torsor with $E(k)$ a torsion group, and the
  relative Brauer group $\Br(X/k)$ is not cyclic, then there must be
  classes of $\H^2(\Spec k,\mu_N)$ for some $N$ split by $X$. An
  explicit example is given in~\cite[Sec~5.2.3]{ciperiani-krashen}.
\end{example}

\begin{example}\label{ex:mnotn}
  We can also examine the proof of Corollary~\ref{ex:rank0} to find
  more subtle phenomena.  If $E$ is an elliptic curve with
  $E(k)\iso\bZ/NM$ generated by a point $P$, where $N,M>1$, and $X$ is and $E$-torsor with
  $\d^{0,1}_2(P) = \alpha\in \Br(k)$ having period $N$, then we see that
  $X$ does not split the $\mu_N$-gerbe associated to $\alpha$ though
  it does split the associated $\mu_{NM}$-gerbe, hence also $\alpha$.
  An explicit example can be taken from our proof of
  Theorem~\ref{thm:a} in Section~\ref{subsec:proof_A}.  Let $E$ be the
  elliptic curve $$y^2 + xy + 4y = x^3 + 4x^2,$$ which is
  \texttt{130.b4} from \texttt{lmfdb.org}~\cite{lmfdb} and satisfies
  $E(\bQ)\iso\bZ/4$.  For any $\chi \in \H^1(\Spec \bQ,\bZ/4)$ we
  define an $E$-torsor $X_\chi$ in Section~\ref{subsec:mu_n-isogeny}
  such that $\d^{0,1}_2(P) = \chi\cup 2^6 = 2(\chi\cup 2)$, where $\chi\cup 2$
  is the quartic cyclic symbol.  Hence the image of
    $\d^{0,1}_2 : E[4](\bQ) \to \H^2(\Spec\bQ,\mu_4)$ is at most a subgroup of
  order 2, which is nontrivial if and only if the cyclic algebra
  $[\chi,2]$ has period 4.  In this case, $X_\chi$ splits the
    $\mu_4$-gerbe associated to the Brauer class $\alpha = 2[\chi,2]$,
  but not the associated $\mu_2$-gerbe.
\end{example}

\begin{example}\label{ex:bigprimes}
  Let $k$ be a global field and let $N$ be invertible in $k$. Fix a
  positive integer $M$ dividing $N$. Sharif showed in~\cite{sharif}
  that for any elliptic curve $E$ over $k$, there exists an $E$-torsor
  $X$ with $\per(X)=N$ and $\ind(X)=MN$. (Clark and
  Sharif~\cite{clark-sharif} had previously shown this result when
  $M=N$.) If $M>1$, then it follows from the exact
  sequence~\eqref{eq:perind} that $X$ splits some non-zero Brauer classes,
    specifically the period-index obstruction classes $Ob(Y)$ where $Y$ ranges
    over the lifts of $X$ to $\H^1(\Spec k,E[N])$. If $E$ is chosen so
  that $E[N](k)=0$, then we see that $X$ cannot split any
  $\mu_{N}$-gerbe. This gives examples of the phenomenon of a genus
  $1$ curve splitting a Brauer class $\alpha$ but not any associated
  $\mu_N$-gerbe $\beta$.  In particular, over $\bQ$, if $N\geq 11$ is
  a prime, then $E[N](\bQ)=0$ for any elliptic curve by Mazur's
  theorem~\cite{mazur} and there is some Brauer class $\alpha$ of
  order $N$ split by an $E$-torsor. In fact, there are infinitely many
  such by~\cite[Thm.~2]{clark-sharif}. This establishes the existence
  of Brauer classes $\alpha$ split by a genus $1$ curve where the
  associated $\mu_N$-gerbe is not split by any genus $1$ curve.
\end{example}

Finally, we examine the case of splitting $\mu_N$-gerbes over local fields.
Here is the non-archimedean case.

\begin{proposition}\label{prop:local}
  If $K$ is a non-archimedean local field and $N\geq 2$, then every $\mu_N$-gerbe
  $\beta\in\H^2(\Spec K,\mu_N)\iso\bZ/N$ is split by a genus $1$
  curve.
\end{proposition}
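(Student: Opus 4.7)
The plan is to construct a single genus $1$ curve $X$ over $K$ whose structure morphism $p\colon X\to\Spec K$ kills every class in $\H^2(\Spec K, \mu_N)$ upon pullback. By the low-degree exact sequence~\eqref{eq:Leray_mu_N} obtained from the Leray--Serre spectral sequence for $\mu_N$-cohomology of $p$, it suffices to show that the differential $d_2^{0,1}\colon \Pic_{X/K}[N](K)\to\H^2(\Spec K,\mu_N)\cong\bZ/N$ is surjective.

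For the Jacobian, I would take the Tate curve $E=E_q$ with Tate parameter $q=\varpi^N$ for a uniformizer $\varpi$ of $K$, so that $E$ is defined over $K$ and $E(\overline K)\cong\overline K^\times/q^\bZ$. Since $q=\varpi^N$ is an exact $N$-th power in $K^\times$, the image $P$ of $\varpi$ in $E(K)=K^\times/q^\bZ$ is an $N$-torsion point of exact order $N$; and since $\varpi$ has valuation $1$ while the identity component $E_0(K)$ is identified with $\Oscr_K^\times$ under the Tate uniformization, $P$ generates $E(K)/E_0(K)\cong\bZ/N$. By non-degeneracy of Tate duality, exactly as in Example~\ref{ex:local}, there is then an $E$-torsor $X$ with $[P,X]_T$ generating $\Br(K)[N]=\tfrac{1}{N}\bZ/\bZ\subseteq\bQ/\bZ$.

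It remains to show that $X$ splits every $\mu_N$-gerbe, not merely every $N$-torsion Brauer class. Viewing $P$ as a $K$-point of $\Pic^0_{X/K}[N]\subseteq\Pic_{X/K}[N]$, the Leray--Serre description of the Tate pairing given in Section~\ref{subsec:Tate} identifies $d_2^{0,1}(P)\in\H^2(\Spec K,\Gm)$ with $[P,X]_T$; applying this description instead to $\mu_N$-cohomology---the necessary compatibility coming from functoriality of the Leray--Serre spectral sequence under the inclusion $\mu_N\hookrightarrow\Gm$, together with the canonical isomorphism $\H^2(\Spec K,\mu_N)\iso\Br(K)[N]$ over a field---yields that the $\mu_N$-valued differential $d_2^{0,1}(P)\in\H^2(\Spec K,\mu_N)$ is itself a generator. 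This proves $d_2^{0,1}$ is surjective and establishes the proposition.

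The main subtlety is the final identification of the $\mu_N$-valued Leray--Serre differential with the $\mu_N$-refinement of the Brauer-valued Tate pairing. As an alternative, one can appeal directly to the compatibility $[P,X]_T=[\overline P,Y]_N$ from the remark following Definition~\ref{def:weil}, with $Y\in\H^1(\Spec K,E[N])$ any lift of $X$ and $\overline P$ the image of $P$ in $E(K)/NE(K)\subseteq\H^1(\Spec K,E[N])$; the right-hand side naturally lives in $\H^2(\Spec K,\mu_N)$ via the cohomological Weil pairing, and one verifies that this canonical lift agrees with $d_2^{0,1}(P)$.
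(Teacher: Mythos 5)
Your proof is correct and follows essentially the same route as the paper: the Tate curve $E_q$ with $q=\varpi^N$, the exact-order-$N$ point $P$, the $E$-torsor $X$ produced by Tate duality with $[P,X]_T$ a generator, and the identification of the Brauer-valued $d_2^{0,1}(P)$ with $[P,X]_T$ followed by a lift to $\mu_N$-cohomology via compatibility of Leray--Serre spectral sequences and the injection $\H^2(\Spec K,\mu_N)\hookrightarrow\Br(K)$. The paper instead first reduces to the case where $\beta$ has exact order $N$ (using that $\H^2(\Spec K,\mu_d)\to\H^2(\Spec K,\mu_N)$ has image the $d$-torsion) and then chooses $X$ with $[P,X]_T=\alpha$ for each such $\beta$, whereas you package the same ingredients into the cleaner statement that a single $X$ makes the $\mu_N$-valued $d_2^{0,1}$ surjective; the substance of the compatibility step you spell out is exactly what the paper asserts in its last two sentences.
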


\begin{proof}
    It suffices to consider the case when $\beta$ has exact order $N$ in
    $\H^2(\Spec K,\mu_N)$. Indeed, every exact order $N$ element of $\H^2(\Spec
    K,\mu_{MN})$ is in the image of the natural map $\H^2(\Spec
    K,\mu_N)\rightarrow\H^2(\Spec K,\mu_{MN})$ induced by
    $\mu_N\hookrightarrow\mu_{MN}$. So, assume that $\beta$ has exact order $N$
    in $\H^2(\Spec K,\mu_N)$. There is an elliptic curve $E$ over $K$ with an exact order $N$ point $P$.
    Indeed, as explained to us by Pete Clark, this follows from the existence
    of Tate curves; see~\cite{tate}. Let $E$ be the Tate elliptic curve with $E(K)\iso
    K^\times/(\varpi^N)^\bZ$, where $\varpi$ is a uniformizer of $K$. As an
    element of $K^\times$, $\varpi$ reduces to give an exact order $N$ point of $E(K)$.
    Let $\alpha$ be the image of $\beta$ in $\Br(K)$.
    By the non-degeneracy of the Tate pairing, there is an $E$-torsor $X$ such
    that $[P,X]_T=\alpha$. It follows that $X$ splits $\alpha$. Specifically,
    viewing $P$ as a $K$-point of the Jacobian $\Pic_{X/K}$, we see that
    $\d_2^{0,1}(P)=\alpha$ where the $\d_2$-differential comes from the
    Leray--Sere spectral sequence for $\Gm$-cohomology. Since $P$ is
    $N$-torsion, we also have $\d_2^{0,1}(P)=\beta$, where we view $P$ as a
    section of $\Pic_{X/K}[N]$ and work in the Leray--Serre spectral sequence
    for $\mu_N$-cohomology. In particular, $X$ splits the $\mu_N$-gerbe
    $\beta$.
\end{proof}

Now, we consider the real case. To begin, we recall a well-known
result about Legendre curves and a classification of the
$2$-torsion of real elliptic curves.

\begin{lemma}
\label{lem:legendre_twist}
An elliptic curve $E$ over a field $k$ of characteristic not $2$ has
full rational $2$-torsion if and only if $E$ is a quadratic twist of a
Legendre curve.
\end{lemma}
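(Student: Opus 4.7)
The plan is to prove the lemma by explicit manipulation of Weierstrass equations, using the hypothesis $\mathrm{char}(k)\neq 2$ to put $E$ into short Weierstrass form $y^2 = f(x)$ with $f$ a separable cubic.

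First I would handle the reverse implication. The Legendre curve $Y^2 = X(X-1)(X-\lambda)$ evidently has full rational $2$-torsion at the points over $X \in \{0,1,\lambda\}$ together with the point at infinity. I would then observe that the quadratic twist by $d \in k^\times$, defined by $dY^2 = X(X-1)(X-\lambda)$, becomes $y^2 = x(x-d)(x-d\lambda)$ under the $k$-rational substitution $X = x/d$, $Y = y/d^2$. Since this model still has its three nontrivial $2$-torsion points $(0,0)$, $(d,0)$, $(d\lambda,0)$ rational over $k$, any such twist inherits full rational $2$-torsion.

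For the forward direction, the hypothesis $E[2] = E[2](k)$ says precisely that the cubic $f$ in a short Weierstrass model of $E$ splits completely over $k$, so $f(x) = (x-e_1)(x-e_2)(x-e_3)$ with distinct $e_i \in k$. Translating $x \mapsto x + e_1$, I reach a model $y^2 = x(x-a)(x-b)$ with $a = e_2-e_1$ and $b = e_3-e_1$ distinct elements of $k^\times$. Matching this to the twisted Legendre model from the previous paragraph with $d = a$ and $\lambda = b/a \in k \setminus \{0,1\}$ exhibits $E$ as the quadratic twist by $a$ of the Legendre curve with parameter $b/a$, completing the argument.

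There is no substantive obstacle: the main care needed is to ensure the changes of variables are $k$-rational and that the short Weierstrass reduction and the formula for the quadratic twist both rely essentially on $\mathrm{char}(k)\neq 2$.
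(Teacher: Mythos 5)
Your proof is correct and follows essentially the same route as the paper: both directions come down to putting $E$ in the form $y^2=(x-e_1)(x-e_2)(x-e_3)$ and matching it to a quadratic twist of a Legendre model by a $k$-rational scaling. The only cosmetic difference is in the converse direction, where you verify the $2$-torsion of a twisted Legendre curve by explicitly writing out the twist model, whereas the paper notes more briefly that twisting cannot change rationality of $2$-torsion since $[-1]$ acts trivially on the group scheme $E[2]$.
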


\begin{proof}
  Given a Weierstrass equation $y^2=f(x)$ of $E$, the rational
  $2$-torsion points correspond to roots of $f(x)$.  Hence $E$ has
  full rational $2$-torsion if and only if it admits a Weierstrass
  equation of the form $y^2 = (x-e_1)(x-e_2)(x-e_3)$ for some distinct
  $e_1, e_2, e_3 \in k$. In this case, the change of variables 
  $(x,y) \mapsto \bigl( (e_2-e_1)x + e_1, (e_2-e_1)^2y \bigr)$ transforms
  the Weierstrass equation into
  $(e_2-e_1) y^2 = x(x-1)(x-\frac{e_3-e_1}{e_2-e_1})$, which is the
  quadratic twist of a Legendre curve.
  Conversely, any Legendre curve has full rational $2$-torsion, hence
  any quadratic twist does as well, since negation acts trivially on
  the group scheme $E[2]$.
\end{proof}

\begin{remark}
  The proof of Lemma~\ref{lem:legendre_twist} shows that if $E$ has
  full rational $2$-torsion, then it is isomorphic to a Legendre curve
  if and only if any of the elements $e_j-e_i \in k^\times$ for
  $i \neq j$ is a square.  In particular, over the field of real
  numbers, every elliptic curve with full rational $2$-torsion is
  isomorphic to a Legendre curve.
\end{remark}

\begin{lemma}
\label{lem:1728}
Let $E$ be an elliptic curve over $\bR$ with $j$-invariant $j(E)$.
\begin{enumerate} 
    \item[{\rm (1)}] If $E$ has full rational $2$ torsion then $j(E) \geq 1728$.  In
  this case, there is a unique nontrivial $E$-torsor $X$, which has
  index 2.

\item[{\rm (2)}] If $E$ does not have full rational $2$ torsion, then $j(E) \leq
  1728$.  In this case, $E[2]$ is isomorphic to the Weil restriction
  group scheme $R_{\bC/\bR}\bZ/2$ and all $E$-torsors are split.

\item[{\rm (3)}] If $j(E)=1728$ then $E$ can have full rational $2$ torsion or
  not, depending on the sign in the Weierstrass equation $y^2=x^3\mp x$ of $E$.
\end{enumerate}
\end{lemma}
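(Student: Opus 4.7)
The plan is to reduce everything to short Weierstrass form $y^2 = x^3 + ax + b$, where the $2$-torsion and $j$-invariant are controlled by the discriminant $\Delta = -16(4a^3+27b^2)$. The key identity
\[
j - 1728 \;=\; \frac{1728 \cdot 432 \, b^2}{\Delta}
\]
shows that the sign of $j-1728$ agrees with the sign of $\Delta$, with equality exactly when $b=0$. Over $\bR$, $\Delta>0$ if and only if the cubic $x^3+ax+b$ has three distinct real roots, if and only if $E$ has full rational $2$-torsion. Combining these, parts (1) and (2) of the $j$-invariant inequalities fall out immediately: full rational $2$-torsion forces $\Delta>0$, hence $j \geq 1728$, while the opposite case forces $\Delta<0$, hence $j \leq 1728$. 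The boundary case $b=0$, i.e.\ $y^2 = x^3 \pm cx$ with $c \in \bR^\times$, gives $j=1728$, and factoring $x(x^2 \pm c)$ shows that full real $2$-torsion occurs precisely when the sign in $y^2 = x^3 - cx$ is negative (up to positive scaling), establishing (3) and the boundary behavior.

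For the structure of $E[2]$ in part (2): when $E$ does not have full rational $2$-torsion, the cubic $x^3+ax+b$ has one real root and a pair of complex conjugate roots, so the three nonzero geometric $2$-torsion points split as one $\bR$-rational point $P$ and a conjugate pair $\{Q,\bar Q\}$. As a Galois module, $E[2]$ is thus the $4$-element group $\{0,P,Q,\bar Q\}$ with the nontrivial element of $\Gal(\bC/\bR)$ fixing $\{0,P\}$ and swapping $Q \leftrightarrow \bar Q = Q+P$. This matches precisely the Galois action on $R_{\bC/\bR}\bZ/2$, whose $\bC$-points are $(\bZ/2)^2$ with factor-swap; an explicit isomorphism sends $0 \mapsto (0,0)$, $P \mapsto (1,1)$, $Q \mapsto (1,0)$, $\bar Q \mapsto (0,1)$.

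For the torsor statements in (1) and (2), I would invoke the standard fact (deducible from the Tate period formula, or from the exponential sequence and topology of $E(\bR)$) that $H^1(\bR, E)$ is isomorphic to the component group $E(\bR)/E(\bR)^\circ$. This is trivial when $E(\bR)$ is connected, which is exactly the case without full real $2$-torsion, giving that all $E$-torsors split and completing (2). When $E$ has full real $2$-torsion, $E(\bR)$ has two connected components, so $H^1(\bR,E) \cong \bZ/2$, yielding a unique nontrivial torsor $X$. Since closed points over $\bR$ have degree $1$ or $2$, and $X(\bR)=\emptyset$ by nontriviality, we conclude $\ind(X)=2$.

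The main obstacle is the $j$-invariant identity and its interpretation: one must pin down both that the sign of $j-1728$ is the sign of $1/\Delta$ and that the equality case $j=1728$ admits two distinct real forms, one with and one without full rational $2$-torsion. Everything else is either a Galois descent computation for $E[2]$ or an appeal to the classical topological description of $H^1(\bR,E)$ via the component group of $E(\bR)$.
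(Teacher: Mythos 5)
Your proof is correct, and the treatment of the $j$-invariant inequalities is genuinely different from (and cleaner than) the paper's. The paper first establishes Lemma~\ref{lem:legendre_twist} (full rational $2$-torsion $\Leftrightarrow$ quadratic twist of a Legendre curve), writes $j$ in terms of the Legendre parameter $\lambda$, minimizes the resulting rational function to get $j\geq 1728$, and then uses a continuity argument for the converse; this works but requires the auxiliary Legendre lemma and an analytic step, and it silently uses that for $j\neq 0,1728$ the only twists are quadratic. Your observation $j-1728 = c_6^2/\Delta$ for a short Weierstrass model, together with ``$\Delta>0$ iff the cubic has three distinct real roots,'' gives both inequalities, the equality case, and the boundary dichotomy of part~(3) in a single algebraic computation, with no minimization or continuity needed. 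For the cohomological content the two approaches converge: the paper runs the Kummer sequence $0\to E(\bR)/2E(\bR)\to \H^1(\bR,E[2])\to \H^1(\bR,E)[2]\to 0$ and uses that every $E$-torsor over $\bR$ has index (hence period) dividing $2$, while you cite the classical fact $\H^1(\bR,E)\iso\pi_0(E(\bR))$, which is exactly what that Kummer computation establishes; the paper's version is more self-contained, yours is a legitimate shortcut. Your explicit identification of $E[2]$ with $R_{\bC/\bR}\bZ/2$ via the matching of the Galois action on the conjugate pair of roots with the factor swap is correct and more concrete than the paper's extension-classification argument, which simply notes that there is a unique geometrically split nonsplit extension of $\bZ/2$ by $\bZ/2$ over $\bR$. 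The one step you label as invoked rather than proved, $\H^1(\bR,E)\iso\pi_0(E(\bR))$, is standard and correct, so there is no gap.
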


\begin{proof}
  The $2$-torsion group scheme $E[2]$ is an extension
  $0\rightarrow\bZ/2\rightarrow E[2]\rightarrow\bZ/2\rightarrow 0$.
  Indeed, every elliptic curve over $\bR$ admits a point of order $2$
  since any cubic polynomial over $\bR$ has a root. Since the extension
  is geometrically split, either $E[2] \iso \bZ/2 \times \bZ/2$ when
  $E$ has full rational $2$-torsion or $E[2] \iso E_{\bC/\bR}\bZ/2$
  when $E[2]$ does not have full rational $2$-torsion.  Now consider
  the exact sequence
$$
0 \to E(\bR)/2E(\bR) \to \H^1(\Spec \bR, E[2]) \to \H^1(\Spec \bR,
E)[2] \to 0
$$
in cohomology.

If $E$ has full rational $2$-torsion, then $E(\bR)$ has two connected
components.  Since the identity component is $2$-divisible,
$E(\bR)/2E(\bR) \iso \bZ/2$ is isomorphic to the component group of
$E(\bR)$.  Since
$$
\H^1(\Spec \bR,E[2]) \iso \H^1(\Spec \bR,\bZ/2 \times \bZ/2) \iso
\bZ/2 \times \bZ/2,
$$
the above exact sequence implies that
$\H^1(\Spec \bR,E)[2] \iso \bZ/2$.  However, since any $E$-torsor has
index, and hence period, dividing $2$, we deduce that
$\H^1(\Spec \bR,E) \iso \bZ/2$. Thus there is a unique nontrivial
$E$-torsor, which has index 2.

If $E$ does not have full rational $2$-torsion, then
$$
\H^1(\Spec \bR, E[2]) = \H^1(\Spec \bR, R_{\bC/\bR}\bZ/2) = \H^1(\Spec
\bC, \bZ/2) = 0,
$$
which then implies that $\H^1(\Spec \bR,E)[2] = 0$.  Again, since
every $E$-torsor has index, hence period, dividing 2, we have that
$\H^1(\Spec \bR,E) = 0$.  Hence all $E$-torsors are split.

We proceed to prove the statements about the $j$-invariant.  By
Lemma~\ref{lem:legendre_twist}, $E$ has full rational $2$-torsion if
and only if $E$ is a quadratic twist of a Legendre curve
$y^2 = x(x-1)(x-\lambda)$ for some $\lambda \neq 0,1$. Since twisting
preserves the $j$-invariant, if $E$ has full rational $2$-torsion then
$j(E)=2^8\tfrac{(\lambda^2-\lambda+1)^3}{\lambda^2(\lambda-1)^2}$.
One can compute that $j(E) \geq 1728$ and that the minimum value of
$1728$ is obtained, for example when $\lambda = 1$. Conversely, by
continuity, every number $j>1728$ is obtained as the $j$-invariant of
a Legendre curve, thus every elliptic curve with $j$-invariant
$j(E)>1728$ is a quadratic twist of a Legendre curve, hence has full
rational $2$-torsion.

Finally, we remark that the unique elliptic curve $E$ over $\bR$ with
$j(E) = 1728$ and full rational $2$-torsion has Weierstrass equation
$y^2=x^3-x$, which is \texttt{32.a3} in the
\texttt{lmfdb.org}~\cite{lmfdb}.  The only other elliptic curve $E$
over $\bR$ with $j(E) = 1728$, \texttt{64.a4} in the
\texttt{lmfdb.org}~\cite{lmfdb} with Weierstrass equation
$y^2 = x^3+x$, does not have full rational $2$-torsion.  These two
elliptic curves are quartic twists of each other,
cf.~\cite[Corollary~X.5.4.1]{silverman}.
\end{proof}

Finally, we classify when a genus $1$ curves can split the (unique)
$\mu_2$-gerbe over $\bR$.

\begin{proposition}
  Let $\beta\in\H^2(\Spec\bR,\mu_2)\iso\bZ/2$ be the $\mu_2$-gerbe
  corresponding to the unique nontrivial element of $\Br(\bR)$. Let
  $E$ be an elliptic curve over $\bR$ with $j$-invariant $j(E)$. Then
  $\beta$ is split by an $E$-torsor if and only if $E$ has full
  rational $2$-torsion, or equivalently if and only if $j(E) > 1728$ or $E$ is the
  unique elliptic curve with $j(E)=1728$ and full rational
  $2$-torsion.
\end{proposition}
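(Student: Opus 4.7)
The plan is to split into two cases according to Lemma~\ref{lem:1728}. If $E$ has no full rational $2$-torsion, Lemma~\ref{lem:1728}(2) says every $E$-torsor is split, hence isomorphic to $E$, which has an $\bR$-point; any such rational point induces a section of the pullback $\H^2(\Spec\bR,\mu_2)\to\H^2(E,\mu_2)$, making it split injective, so no $E$-torsor can split the nontrivial gerbe $\beta$. The equivalence with the stated $j$-invariant conditions is the content of Lemma~\ref{lem:1728}(1),(3).

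For the other direction, assume $E$ has full rational $2$-torsion. By Lemma~\ref{lem:1728}(1) there is a unique nontrivial $E$-torsor $X$ with $\per(X)=\ind(X)=2$, and I must show $X$ splits $\beta$. The Leray--Serre exact sequence~\eqref{eq:Leray_mu_N} reduces this to showing that the differential $d_2^{0,1}\colon E[2](\bR)\to\H^2(\Spec\bR,\mu_2)=\bZ/2$ is nonzero. The Kummer sequence together with $\Pic(\bR)=0$ produces an isomorphism $\H^2(\Spec\bR,\mu_2)\iso\Br(\bR)$ that identifies $d_2^{0,1}$ with the restriction to $E[2](\bR)$ of the map $E(\bR)=\Pic^0_{X/\bR}(\bR)\to\Br(\bR)$ of Lemma~\ref{lem:splitting}, whose image is $\Br(X/\bR)$ by~\eqref{eq:perind} and $\per(X)=\ind(X)$.

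The main obstacle is showing $\Br(X/\bR)=\Br(\bR)=\bZ/2$, equivalently that $X$ splits the nontrivial Brauer class $(-1,-1)$. Since $X$ is nontrivial, $X(\bR)=\emptyset$, and I plan to invoke Witt's theorem on the Brauer groups of smooth real curves, giving $\Br(X)\iso(\bZ/2)^s$ for $s$ the number of connected components of $X(\bR)$; in our case $s=0$, so $\Br(X)=0$. Alternatively, since $\ind(X)=2$ one can realize $X$ explicitly as $y^2=f(x)$ with $\deg f=4$, $f<0$ on $\bR$ and negative leading coefficient, then use Landau's result that positive rational functions on $\bR$ are sums of two squares in $\bR(x)$ to express $-1$ as a sum of two squares in $\bR(X)$, splitting $(-1,-1)$ over $\bR(X)$ and hence over $X$ by Auslander--Goldman injectivity. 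Granting this, the homomorphism $E(\bR)\to\bZ/2$ vanishes on the divisible connected component $E(\bR)^\circ\iso S^1$, so its kernel is exactly $E(\bR)^\circ$. The four elements of $E[2](\bR)\iso\mu_2\times\bZ/2$ split into two inside $E(\bR)^\circ$ and two outside; the latter map nontrivially to $\bZ/2$, proving $d_2^{0,1}$ is surjective, hence $X$ splits $\beta$.
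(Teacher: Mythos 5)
Your proof is correct, and for the key step it takes a genuinely different route from the paper's. Both you and the paper reduce the hard direction (``if $E$ has full rational $2$-torsion, then the unique nontrivial $E$-torsor $X$ splits $\beta$'') to two tasks: first show that $X$ splits the Brauer class $\alpha = (-1,-1)$, i.e.\ that $\Br(X/\bR) = \bZ/2$; then bootstrap from $\Gm$-splitting to $\mu_2$-splitting by observing that the differential $d_2^{0,1}\colon E[2](\bR)\to\H^2(\Spec\bR,\mu_2)$ cannot vanish because the identity component $E(\bR)^\circ\iso S^1$ (which is the kernel of the surjection $E(\bR)\twoheadrightarrow\Br(\bR)$, being divisible and of index $2$) contains only two of the four $2$-torsion points. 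The bootstrapping step is the same in both arguments, merely phrased in slightly different words.

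Where you diverge is on the first task. The paper uses the period-index obstruction machinery from Section~\ref{sec:clark-oneil}: after choosing a symplectic full level $2$ structure, it observes that one of the four $E[2]$-torsors must have $Ob(Y)=\alpha$ (exhausting the possibilities for the linear correction terms $v,\sigma$), and then invokes Lemma~\ref{lem:pio}. You instead quote Witt's classical theorem that $\Br(X)\iso(\bZ/2)^s$ for a smooth projective real curve $X$ with $s$ connected components of $X(\bR)$; since $X(\bR)=\emptyset$, $\Br(X)=0$ and so $\Br(X/\bR)=\Br(\bR)$ trivially. Your alternative via Landau's sums-of-two-squares result for positive rational functions makes this even more hands-on. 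Both are valid. The paper's obstruction-theoretic route is preferred there because it runs inside the general framework being built (and generalizes beyond $\bR$); your route is shorter and more self-contained for the specific real case, at the cost of importing Witt's theorem, which the paper otherwise does not use. One small notational slip: $E[2](\bR)\iso\bZ/2\times\bZ/2$ (you wrote $\mu_2\times\bZ/2$, which of course coincides over $\bR$ but is not the canonical description of the $2$-torsion group of an elliptic curve with full rational $2$-torsion); this does not affect the counting argument.
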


\begin{proof}
  By Lemma~\ref{lem:1728}, we must show that if $E$ has full rational
  $2$-torsion, then the unique nontrivial $E$-torsor $X$ splits
  $\beta$.  Choosing a (necessarily symplectic) full level $2$
  structure on $E$, we get an obstruction map of the
  form $$Ob(Y)=[\chi,u]+[\chi,v]+[u,\sigma]$$ for
  $Y\in\H^1(\Spec\bR,E[2])$, where $\chi,u,v,\sigma$ are as in
  Proposition~\ref{prop:zarhin}. Whatever the nature of $v,\sigma$,
  some choice of $\chi,u$ (corresponding to some choice of
  $E[2]$-torsor $Y$ lifting $X$) will satisfy $Ob(Y)=\alpha$, where
  $\alpha$ is the Brauer class of $\beta$ (corresponding to Hamilton's
  quaternions).  By Lemma~\ref{lem:pio}, $X$ splits $\alpha$.  Since
  $\per(X)=\ind(X)=2$, the exact sequence~\eqref{eq:perind} implies
  that the exact sequence of low degree terms of the Leray spectral
  sequence for $\Gm$ on $X$ reads as
  $$0\rightarrow\Pic(X)\rightarrow
  E(\bR)\rightarrow\H^2(\Spec\bR,\Gm) \rightarrow 0.$$ This implies
  that $\Pic(X)$ is isomorphic to the connected component of the
  identity of $E(\bR)$.  In particular, we have
  $\Pic(X)[2]\iso\bZ/2$. Since $E(\bR)[2])\iso\bZ/2\times\bZ/2$, the
  exact sequence \eqref{eq:Leray_mu_N} shows that there must be a
  non-trivial differential
  $\Pic_{X/\bR}[2](\bR)\rightarrow\H^2(\Spec\bR,\mu_2)$. In other
  words, $X$ splits $\beta$, as desired.
\end{proof}

\subsection{Descent via $\mu_N$-isogenies}
\label{subsec:mu_n-isogeny}

In this section, we describe the specific tools we
will use to prove Theorems~\ref{thm:a} and~\ref{thm:b}. Let
$$0\rightarrow\mu_N\rightarrow E\xrightarrow{\varphi} E'\rightarrow
0$$ be a $\mu_N$-isogeny with dual
isogeny
$$0\rightarrow\bZ/N\rightarrow
E'\xrightarrow{\widehat{\varphi}}E\rightarrow 0.$$ Implicitly, in
identifying $\ker(\widehat{\varphi}) \iso \bZ/N$ we have made a choice
of an $S$-point $P$ of $E'$ of order $N$. We have a commutative diagram
\begin{equation}
\label{eq:liftpoint}
\xymatrix{
  0\ar[r]&\mu_N\ar@{=}[d]\ar[r]&E[N]\ar[r]\ar[d]&\bZ/N\ar[d]\ar[r]&0\\
  0\ar[r]&\mu_N\ar[r]&E\ar[r]&E'\ar[r]&0 }
\end{equation}
with exact rows.  Taking
boundaries we obtain a commutative diagram
$$\xymatrix{
    \H^1(S,\bZ/N)\ar[r]^\delta\ar[d]&\H^2(S,\mu_N)\ar@{=}[d]\\
    \H^1(S,E')\ar[r]^\delta&\H^2(S,\mu_N).
}$$
The crux of our argument is the following simple lemma.

\begin{lemma}
\label{lem:split_delta}
    Let $\chi\in\H^1(S,\bZ/N)$ have associated $E'$-torsor $X_\chi$. Then,
    $X_\chi$ splits $\delta(\chi)$.
\end{lemma}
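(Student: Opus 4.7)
The plan is to exploit the commutative diagram that has just been set up, combined with the tautological fact that any torsor becomes trivial when pulled back to its own total space. Writing $p\colon X_\chi\to S$ for the structure morphism, the goal is to verify that $p^*\delta(\chi)=0$ in $\H^2(X_\chi,\mu_N)$.

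First I would trace the class $\chi$ through the commutative square: the image of $\chi$ under the right vertical identification is $\delta(\chi)\in\H^2(S,\mu_N)$, while by going around the other way, $\delta(\chi)$ equals $\delta(X_\chi)$, where the second $\delta$ is now the boundary map associated to the $\mu_N$-isogeny $0\to\mu_N\to E\xrightarrow{\varphi}E'\to 0$. Thus, from the long exact sequence in fppf cohomology of this isogeny, $\delta(\chi)=\delta(X_\chi)$ is precisely the obstruction to lifting the $E'$-torsor $X_\chi$ to an $E$-torsor.

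Next I would apply $p^*$ to the whole situation. The boundary map $\delta$ is a natural transformation of functors on the big fppf site, so $p^*\delta(X_\chi)=\delta(p^*X_\chi)$ inside $\H^2(X_\chi,\mu_N)$. But $p^*X_\chi$ is the pullback of the $E'$-torsor $X_\chi$ to its own total space, and this torsor carries the tautological (diagonal) section. Hence $p^*X_\chi$ is the trivial class in $\H^1(X_\chi,E')$, and therefore $\delta(p^*X_\chi)=0$.

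Combining these steps gives $p^*\delta(\chi)=p^*\delta(X_\chi)=\delta(p^*X_\chi)=0$, which is exactly the statement that $X_\chi$ splits the $\mu_N$-gerbe $\delta(\chi)$. No real obstacle arises here; the only subtlety is being careful that the diagram comparing the two boundary maps is the one already built from \eqref{eq:liftpoint}, and that naturality of $\delta$ under pullback is applied in the fppf topology so it is valid for the (possibly non-smooth) group scheme $E'$ and the $\mu_N$-coefficient system.
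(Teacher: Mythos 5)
Your proof is correct and is essentially a careful unpacking of the paper's one-line argument: the paper also uses the commutativity of the boundary-map square to rewrite $\delta(\chi)=\delta(X_\chi)$ and then invokes the fact that $X_\chi$ trivializes itself (via the diagonal section), from which naturality of the coboundary gives the vanishing. There is no substantive difference in approach.
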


\begin{proof}
    Indeed, $X_\chi$ splits the class $X_\chi$ itself and hence it splits
    $\delta(\chi)=\delta(X_\chi)$.
\end{proof}

It remains to compute the boundary map
$\delta\colon\H^1(S,\bZ/N)\rightarrow\H^2(S,\mu_N)$.
We can write the cohomological Weil pairing associated to the isogeny
$\varphi$ as a pairing
$$(-,-)_\varphi\colon\H^1(S,\mu_N)\times\H^1(S,\bZ/N)\rightarrow\H^2(S,\mu_N)$$
as in Definition~\ref{def:weil}.
The obstruction to lifting $P$ to
an $S$-point of $E[N]$, via the coboundary map
$\delta\colon\H^0(S,\bZ/N)\rightarrow\H^1(S,\mu_N)$ from the exact sequence \eqref{eq:liftpoint}, gives
a distinguished element $\delta(P)\in\H^1(S,\mu_N)$.

\begin{lemma}
\label{lem:delta_symbol}
    We have $\delta(\chi)=\delta(P)\cup\chi=(\delta(P),\chi)_\varphi$ in $\H^2(S,\mu_N)$.
\end{lemma}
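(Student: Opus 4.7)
The plan is to reduce to an explicit cocycle calculation for the boundary associated to the top row of~\eqref{eq:liftpoint}, and then match the result with the Weil pairing by unwinding definitions.

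First, by functoriality of the boundary map applied to the morphism of short exact sequences in~\eqref{eq:liftpoint}, the boundary $\delta\colon \H^1(S,E')\to \H^2(S,\mu_N)$ applied to the $E'$-torsor $X_\chi$ agrees with the image of $\chi$ under the boundary $\delta\colon \H^1(S,\bZ/N)\to \H^2(S,\mu_N)$ for the top row $0\to\mu_N\to E[N]\to\bZ/N\to 0$. Hence it suffices to compute the latter.

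For the main calculation I would work fppf-locally on $S$ and choose a lift $\widetilde{P}\in E[N]$ of the section $P$ under the surjection $E[N]\to\bZ/N$. The standard description of the boundary map says that the $\mu_N$-valued $1$-cocycle $\sigma(g) = g\widetilde{P} - \widetilde{P}$ (which lies in $\mu_N$ because $P$ itself is a global section of $\bZ/N$) represents $\delta(P)\in \H^1(S,\mu_N)$. Given a $1$-cocycle $\chi$ representing a class in $\H^1(S,\bZ/N)$---which amounts to a homomorphism valued in $\bZ/N$ since the action on $\bZ/N$ is trivial---the pointwise lift $\widetilde{\chi}(g) = \chi(g)\cdot\widetilde{P}\in E[N]$ has coboundary
\[
d\widetilde{\chi}(g,h) \;=\; \chi(g)\widetilde{P} + \chi(h)\cdot g\widetilde{P} - \chi(gh)\widetilde{P} \;=\; \chi(h)\cdot\sigma(g) \;\in\; \mu_N,
\]
where one uses $\chi(gh)=\chi(g)+\chi(h)$ and that scalar multiplication by $\chi(h)\in\bZ/N$ commutes with the action. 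This $2$-cocycle represents $\delta(\chi)$, and manifestly it is the cup product of $\sigma$ with $\chi$ under the evaluation pairing $\mu_N\otimes\bZ/N\to\mu_N$, $(\zeta,a)\mapsto\zeta^a$. Hence $\delta(\chi) = \delta(P)\cup\chi$.

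The remaining equality $\delta(P)\cup\chi = (\delta(P),\chi)_\varphi$ is essentially by definition: by Section~\ref{subsec:Weil}, the Weil pairing $(-,-)_\varphi$ associated to the $\mu_N$-isogeny $\varphi$ is the cup product on $\H^1$ induced by the Cartier duality pairing $\ker(\varphi)\otimes\ker(\widehat{\varphi})\to\mu_N$, which in the present case is precisely the evaluation pairing $\mu_N\otimes\bZ/N\to\mu_N$ used above. The main subtlety is making the cocycle argument rigorous in fppf cohomology on a general base $S$; this is handled by carrying out the calculation on an fppf hypercover over which $\widetilde{P}$ exists and then invoking descent, or alternatively by recasting the argument as Yoneda product with the extension class in $\Ext^1(\bZ/N,\mu_N)$ and identifying this class with $\delta(P)$ via Cartier duality.
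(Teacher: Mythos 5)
Your proof is correct, and your main argument takes a genuinely different route from the paper. You compute the boundary by explicit cocycle manipulation: lift the section $P$ to $\widetilde{P}$, take the $\mu_N$-valued cocycle $\sigma(g)=g\widetilde{P}-\widetilde{P}$, and check that the coboundary of the pointwise lift $\chi(g)\widetilde{P}$ is the cup-product cocycle $\chi(h)\cdot\sigma(g)$. Your algebra is right, and the identification of the $\varphi$-Weil pairing with the Cartier duality pairing $\mu_N\otimes\bZ/N\to\mu_N$ is exactly what the paper means by $(-,-)_\varphi$ here. The paper instead proves the more general Lemma~\ref{lem:coboundary} by a derived-category argument: since $E[N]$ is $N$-torsion, the extension class of $0\to\mu_N\to E[N]\to\bZ/N\to0$ lives in $\Hom(\bZ/N,\mu_N[1])\iso\H^1(S,\mu_N)$ in the derived category of fppf $\bZ/N$-modules, and the boundary in every degree $i$ is simply Yoneda composition with that class. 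The ``alternative'' you mention at the end is in fact the paper's proof.

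Each approach has its merits. Your cocycle computation is concrete, shows exactly where the cup-product structure comes from, and is essentially a Galois-cohomology argument (cocycle $=$ crossed homomorphism). Its weakness is the one you flag: in the fppf topos with $N$ possibly non-invertible, Čech cocycles need not compute derived-functor cohomology, so the calculation has to be run on a hypercover, which adds bookkeeping you did not spell out. The paper's derived-category argument sidesteps cocycles entirely, works verbatim over any base $S$ and in every cohomological degree $i$, and makes the role of the $N$-torsion hypothesis transparent (it is exactly what places the extension class in the $\bZ/N$-linear derived category). One small imprecision: identifying the extension class with $\delta(P)=\epsilon$ is not ``via Cartier duality''; it is the standard identification of an extension class with the image of the identity under the connecting map, i.e.\ the boundary of $1\in(\bZ/N)(S)$. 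Cartier duality enters only in matching the evaluation pairing with the Weil pairing $(-,-)_\varphi$, which is the other equality in the lemma. Also, your opening reduction from the bottom row to the top row of~\eqref{eq:liftpoint} is not needed for this lemma (which is stated purely about the top-row boundary); that compatibility is what the commutative square displayed just before Lemma~\ref{lem:split_delta} records, and it is used there, not here.
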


This is a special case of the following, more general lemma.

\begin{lemma}\label{lem:coboundary}
  Let $S$ be a scheme and let $A$ be an $N$-torsion commutative group
  scheme over $S$ which sits in an
  extension
  $$0\rightarrow\mu_N\rightarrow A\rightarrow\bZ/N\rightarrow 0$$ of
  fppf sheaves of abelian groups. Let $\epsilon\in\H^1(S,\mu_N)$ be the image
  of $1$ via the coboundary map
  $(\bZ/N)(S)\rightarrow\H^1(S,\mu_N)$. Then, for any $i$, the map
  $\H^i(S,\bZ/N)\rightarrow\H^{i+1}(S,\mu_N)$ is given by the cup
  product
  map $$\epsilon\cup(-)\colon\H^i(S,\bZ/N)\rightarrow\H^{i+1}(S,\mu_N).$$
\end{lemma}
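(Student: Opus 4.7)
The plan is to recognize the coboundary $\delta$ as Yoneda composition with the extension class $[A]\in\Ext^1_S(\bZ/N,\mu_N)$, then to identify $[A]$ with $\epsilon$ via the natural map $\Ext^1_S(\bZ/N,\mu_N)\to\H^1(S,\mu_N)$ induced by pullback along the quotient $\bZ\to\bZ/N$.

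First, I would invoke the standard homological principle: a short exact sequence $0\to F'\to F\to F''\to 0$ of fppf abelian sheaves on $S$ determines a morphism $F''\to F'[1]$ in the derived category $D(S_{\fppf},\bZ)$ whose class in $\Ext^1_S(F'',F')$ is the extension class, and applying $\R\Gamma(S,-)$ recovers the connecting maps in the associated long exact sequence of cohomology. Applied to $0\to\mu_N\to A\to\bZ/N\to 0$, this yields that $\delta\colon\H^i(S,\bZ/N)\to\H^{i+1}(S,\mu_N)$ is Yoneda composition: for $\chi\colon\bZ\to\bZ/N[i]$ representing a class in $\H^i(S,\bZ/N)=\Ext^i_S(\bZ,\bZ/N)$, we have $\delta(\chi)=[A]\circ\chi\in\Ext^{i+1}_S(\bZ,\mu_N)=\H^{i+1}(S,\mu_N)$.

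Second, I would identify $[A]$ and $\epsilon$ via the relation $[A]\circ j=\epsilon$, where $j\colon\bZ\to\bZ/N$ is the quotient sending $1\mapsto 1$. This is, up to unwinding, just the definition of $\epsilon$: the coboundary of $1\in(\bZ/N)(S)=\Hom_S(\bZ,\bZ/N)$ is, by the general principle of the previous paragraph applied in degree $i=0$ to the map $j$, precisely the composition $[A]\circ j\in\Ext^1_S(\bZ,\mu_N)=\H^1(S,\mu_N)$.

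Finally, I would match the Yoneda composition $[A]\circ\chi$ with the cup product $\epsilon\cup\chi$. Since $A$ is an extension in the abelian category of $\bZ/N$-modules (both $\mu_N$ and $\bZ/N$ are $N$-torsion), the morphism $[A]\colon\bZ/N\to\mu_N[1]$ is $\bZ/N$-linear. Combined with the canonical identification $\mu_N\otimes\bZ/N\iso\mu_N$ induced by the $\bZ/N$-module structure on $\mu_N$, and with the unitality relation $j\cup\chi=\chi$ (where $j$ represents the unit $\bZ\to\bZ/N$), one computes
\[
\epsilon\cup\chi=([A]\circ j)\cup\chi=[A]\circ(j\cup\chi)=[A]\circ\chi=\delta(\chi)
\]
in $\H^{i+1}(S,\mu_N)$. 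The main technical obstacle is verifying the middle equality, which amounts to the standard compatibility of cup products in sheaf cohomology with Yoneda composition via module actions; this can also be checked directly by representing $\chi$ as a \v{C}ech cocycle, lifting it to local sections of $A$, and comparing the resulting $\mu_N$-valued coboundary with the explicit cup-product cocycle built from local lifts of $1$ (which represent $\epsilon$).
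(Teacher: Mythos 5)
Your proof is correct and uses essentially the same idea as the paper: recognize the connecting map as Yoneda composition with the extension class of $A$, identify that class with $\epsilon$, and then match composition with the cup product. The paper packages this more efficiently by working from the start in the derived category of fppf sheaves of $\bZ/N$-modules, where $\bZ/N$ is the monoidal unit, so that $\Hom(\bZ/N,\mu_N[1])\iso\H^1(S,\mu_N)$ is immediate and composition of $\chi\colon\bZ/N\to\bZ/N[i]$ with $\epsilon[i]$ literally \emph{is} the cup product, making the cup-versus-Yoneda compatibility you flag at the end unnecessary as a separate step.
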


\begin{proof}
    Since $A$ is $N$-torsion,
    the extension $0\rightarrow\mu_N\rightarrow A\rightarrow\bZ/N\rightarrow 0$
    is determined by a homotopy class of maps
    $\bZ/N\rightarrow\mu_N[1]$ in the derived category of fppf sheaves of
    $\bZ/N$-modules. Since the constant sheaf $\bZ/N$ is the unit object of this category, we have
    $$\Hom_S(\bZ/N,\mu_N[1])\iso\H^1(S,\mu_N).$$ Now, giving a class
    $\chi\in\H^i(S,\bZ/N)$ is equivalent to giving a homotopy class of maps
    $\bZ/N\rightarrow\bZ/N[i]$ in the derived category.
    We can compose with the suspension $$\epsilon[i]\colon\bZ/N[i]\rightarrow\mu_N[i+1]$$ to obtain
    the Yoneda/cup product $$\bZ/N\xrightarrow{\chi}\bZ/N[i]\xrightarrow{u}\mu_N[i+1],$$
    which is what we wanted to show.
\end{proof}

\subsection{Splitting $\mu_N$-gerbes with abelian scheme
torsors}\label{sec:abelian}

Recall the following theorem of
Raynaud~\cite[Thm.~3.1.1]{berthelot-breen-messing}.

\begin{theorem}[Raynaud]
    Let $A\rightarrow S$ be a finite flat $N$-torsion commutative group scheme. Zariski
    locally on $S$ there exists an abelian scheme $J\rightarrow S$ and an
    inclusion $A\rightarrow J[N]$.
\end{theorem}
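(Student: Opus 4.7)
The idea is to realize $A$ Zariski-locally as a subgroup scheme of the $N$-torsion of a suitable product of elliptic curves, built piece by piece from the connected-\'etale filtration of $A$.

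Since the claim is Zariski-local, I reduce to $S = \Spec(R)$ affine and, after further localization, arrange that $A$ has constant rank and admits a connected-\'etale decomposition
\[
0 \to A^0 \to A \to A^{\et} \to 0,
\]
and likewise for its Cartier dual $A^\vee$. I would treat the pieces separately and combine by taking products. For the \'etale part, after shrinking further we may assume $A^{\et}$ is the constant group scheme $\underline{M}$ associated to some finite abelian $M$ of exponent dividing $N$; then $\underline{M}$ embeds into $(\bZ/N)^r$ for some $r$, and $\bZ/N$ in turn embeds into the $N$-torsion of any elliptic scheme over $S$ carrying a global $N$-torsion section. Such an $E/S$ exists Zariski locally---for instance by base changing a fixed elliptic curve over $\bZ[\tfrac{1}{N}]$ with rational $N$-torsion, or by pulling back from a Tate curve. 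Dualizing via Cartier duality and applying the same construction on the Cartier-dual side gives the corresponding embedding of the multiplicative-type piece into $\mu_N^r \hookrightarrow (E^r)[N]$, using the canonical sequence $0 \to \mu_N \to E[N] \to \bZ/N \to 0$ furnished by a Tate elliptic curve.

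The main obstacle is the local-local part in residue characteristic $p \mid N$, where $A^0$ may contain factors such as $\alpha_p$ that are neither \'etale nor of multiplicative type, and therefore cannot be embedded into the $N$-torsion of a torus or of a product of constant group schemes. To accommodate these, one must exploit abelian schemes whose $N$-torsion realizes subgroups of the appropriate infinitesimal type, for instance supersingular elliptic curves in characteristic $p$, which carry $\alpha_p$ as a subgroup of their $p$-torsion. Raynaud's argument in~\cite{berthelot-breen-messing} treats all three pieces of $A$ uniformly, using the structure theory of finite flat commutative group schemes and producing, after sufficient Zariski shrinking of $S$, a product $J$ of elliptic schemes over $S$ whose $N$-torsion contains $A$ as a subgroup scheme.
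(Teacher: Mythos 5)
The paper gives no proof of this statement; it simply cites Raynaud's Théorème 3.1.1 in Berthelot--Breen--Messing. Measured against Raynaud's actual argument, your sketch has genuine gaps.

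The central problem is that the reduction you make at the outset is not available. The connected--\'etale sequence
$0 \to A^0 \to A \to A^{\et} \to 0$
exists over a Henselian local base (or over a field), but not Zariski-locally on a general $S$: the connected component of the identity in $A$ need not be flat over $S$, hence need not be a subgroup scheme, and passing to a Henselization is not a Zariski-local operation. The further splitting of $A^0$ into a multiplicative piece and a local-local piece requires a perfect residue field. And even when a connected--\'etale \emph{sequence} does exist, it is in general a non-split extension, so you cannot ``treat the pieces separately and combine by taking products'': an embedding of $A^0$ and of $A^{\et}$ into abelian schemes does not by itself yield an embedding of $A$. The \'etale and multiplicative cases you work out are the easy ones; the local-local piece is precisely where the theorem has content, and the sketch punts on it entirely by invoking ``Raynaud's argument treats all three pieces uniformly'' --- which is circular, and also not what Raynaud does. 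Finally, the conclusion you aim for is stronger than the theorem: you claim $J$ can be taken to be a product of elliptic schemes, but the statement only asserts an abelian scheme, and Raynaud's construction produces a Jacobian of a curve, not a product of elliptic curves.

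Raynaud's actual proof is uniform and avoids the connected--\'etale decomposition. After reducing to $S$ local Noetherian, one embeds $A$ into the projectivization $\PP(V)$ of the regular representation. Taking a sufficiently generic complete intersection curve $D$ in the quotient $\PP(V)/A$ and letting $C$ be its preimage, one obtains a finite flat $A$-torsor $C \to D$ of smooth proper relative curves. The kernel of the pullback $\pi^{*}\colon \Pic^0_{D/S} \to \Pic^0_{C/S}$ is canonically the Cartier dual $A^{\vee}$, giving a closed embedding $A^{\vee} \hookrightarrow \Pic^0_{D/S}$; dualizing abelian schemes then yields $A \hookrightarrow \widehat{\Pic^0_{D/S}}$. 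This handles the local-local case with no separate argument and works directly over the Noetherian local base.
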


We use Raynaud's theorem to show that every $\mu_N$-gerbe over a field is split
by a torsor for an abelian variety.

\begin{theorem}\label{thm:ms}
    Let $k$ be a field.
    If $\beta\in\H^2(\Spec k,\mu_N)\iso\Br(k)[N]$, then there exists an abelian
    scheme $J'\rightarrow \Spec k$ and a $J'$-torsor
    $X\rightarrow \Spec k$ such that $\beta$ restricts to $0$ in $\H^2(X,\mu_N)$.
\end{theorem}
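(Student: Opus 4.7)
The plan is to adapt the $\mu_N$-isogeny technique from Section~\ref{subsec:mu_n-isogeny} to the abelian variety setting. Given any $\mu_N$-isogeny $J \to J'$ of abelian varieties over $k$, the boundary map $\delta : \H^1(\Spec k, J') \to \H^2(\Spec k, \mu_N)$ from the associated short exact sequence has the property that every $J'$-torsor $X$ splits $\delta([X])$: this is the same self-splitting principle used in Lemma~\ref{lem:split_delta}, since $[X]$ pulls back to zero in $\H^1(X, J')$, and naturality of $\delta$ forces $\delta([X])$ to die in $\H^2(X, \mu_N)$. So it suffices to construct $J$ so that $\beta$ lies in the image of $\delta$, equivalently so that $\beta$ is killed by the forward map $\iota_* : \H^2(\Spec k, \mu_N) \to \H^2(\Spec k, J)$.

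The construction I would use combines Raynaud with Weil restriction from a splitting field of $\beta$. First, pick a finite separable extension $L/k$ with $\beta|_L = 0$, using that the image of $\beta$ in $\Br(k)[N]$ is split by any maximal separable subfield of an underlying division algebra. Apply Raynaud over $\Spec L$ to obtain an abelian variety $A/L$ with a closed embedding $\mu_{N,L} \hookrightarrow A$, and set $J := R_{L/k}(A)$, an abelian variety over $k$. The Weil restriction of $\mu_{N,L} \hookrightarrow A$ gives $R_{L/k}(\mu_{N,L}) \hookrightarrow J$, and precomposing with the adjunction unit $\mu_N \hookrightarrow R_{L/k}(\mu_{N,L})$ (injective by faithful flatness of $L/k$) yields a closed embedding $\iota : \mu_N \hookrightarrow J$. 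Set $J' := J/\mu_N$, which is again an abelian variety since the quotient of an abelian variety by a finite flat subgroup scheme is an abelian variety.

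It remains to check $\iota_*(\beta) = 0$. By construction, $\iota$ factors as $\mu_N \hookrightarrow R_{L/k}(\mu_{N,L}) \hookrightarrow J$, so $\iota_*$ factors through $\H^2(\Spec k, R_{L/k}(\mu_{N,L}))$. Shapiro's lemma for the finite \'etale morphism $\Spec L \to \Spec k$ (equivalently, Leray combined with exactness of $R_{L/k}$ along finite \'etale covers) identifies $\H^2(\Spec k, R_{L/k}(\mu_{N,L}))$ with $\H^2(\Spec L, \mu_N) = \Br(L)[N]$ and the first arrow of $\iota_*$ with the restriction $\Br(k)[N] \to \Br(L)[N]$, which annihilates $\beta$ by the choice of $L$. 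Hence $\beta \in \im(\delta)$, and any $J'$-torsor $X$ with $\delta([X]) = \beta$ splits $\beta$ by the opening paragraph.

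The main obstacle is precisely forcing the vanishing in $\H^2(\Spec k, J)$; for an abelian variety $J_0$ obtained directly from Raynaud over $k$, the obstruction class $\iota_*(\beta) \in \H^2(\Spec k, J_0)$ is essentially uncontrollable, and the role of the Weil restriction from a splitting field is to convert it into the restriction map on $\mu_N$-cohomology, which vanishes tautologically.
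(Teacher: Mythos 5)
Your proof is correct and follows the same overall strategy as the paper's: reduce to a finite separable splitting field $L$ of $\beta$, Weil-restrict $\mu_N$ along $\Spec L \to \Spec k$ so that the obstruction to lifting becomes the restriction $\H^2(\Spec k,\mu_N)\to\H^2(\Spec L,\mu_N)$ (which kills $\beta$ by Shapiro's lemma), use Raynaud's embedding theorem to place this inside an abelian variety $J$, pass to $J' = J/\mu_N$, and invoke the self-splitting principle of Lemma~\ref{lem:split_delta} for the boundary $\delta\colon\H^1(\Spec k,J')\to\H^2(\Spec k,\mu_N)$.

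The one genuine divergence is in the construction of $J$. The paper sets $A = p_*\mu_N = R_{L/k}(\mu_{N,L})$ and applies Raynaud's theorem over $k$ to embed $A$ directly into an abelian scheme $J/k$. You instead apply Raynaud over $L$ to embed $\mu_{N,L}$ into an abelian variety $A/L$ and then set $J = R_{L/k}(A)$. Both are valid. Your construction is somewhat more concrete (Raynaud is applied only to $\mu_N$ over a field), but at the cost of needing the nontrivial input that Weil restriction of an abelian variety along a finite separable extension is again an abelian variety. You also streamline the final step: rather than introducing the intermediate quotient $A' = A/\mu_N$ and a class $\sigma\in\H^1(\Spec k,A')$ as the paper does, you go directly to the long exact sequence for $0\to\mu_N\to J\to J'\to 0$ and use $\iota_*(\beta)=0$ to produce the torsor. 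Both variants are fine; the paper's route through $A'$ is perhaps preferable if one wants the argument to apply over a local ring base (as the subsequent Remark notes), since Weil restriction behaves less transparently there, but over a field there is no issue.
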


\begin{proof}
    Let $K$ be a finite \'etale extension of $k$ which splits $\beta$.
    Denote by $p\colon\Spec K\rightarrow\Spec k$ the
    morphism of schemes and let $A=p_*\mu_N$, which is a finite flat group
    scheme on $\Spec k$. Geometrically, $A$ is a direct sum of $r$ copies of
    $\mu_N$ where $r$ is the degree of $K$ over $k$. There is a natural
    injective unit
    map $\mu_N\rightarrow A=p_*\mu_N$ and we let $A'$ be the quotient. The Leray spectral
    sequence $\E_2^{s,t}=\H^s(\Spec k,\R p_*^t\mu_N)\Rightarrow\H^{s+t}(\Spec
    K,\mu_N)$ gives rise to an exact sequence of low degree terms
    $$\H^1(\Spec k,\mu_N)\rightarrow\H^1(\Spec
    K,\mu_N)\rightarrow\H^0(\Spec k,\R p_*^1\mu_N)\rightarrow\H^2(\Spec
    k,\mu_N)\rightarrow\H^2(\Spec K,\mu_N),$$
    which we see we can rewrite as part of the long exact sequence
    $$\H^1(\Spec k,\mu_N)\rightarrow\H^1(\Spec
    k,A)\rightarrow\H^1(\Spec k,A')\xrightarrow{\delta}\H^2(\Spec
    k,\mu_N)\rightarrow\H^2(\Spec k,A)$$
    in cohomology arising from the short exact sequence
    $0\rightarrow\mu_N\rightarrow A\rightarrow A'\rightarrow 0$.
    Since $\beta$ maps to $0$ in $\H^2(\Spec k,A)\iso\H^2(\Spec K,\mu_N)$,
    there is some class $\sigma\in\H^1(\Spec k,A')$ such that
    $\delta(\sigma)=\beta$. Using Raynaud's theorem, fix an abelian
    scheme $J$ together with an embedding $A\hookrightarrow J$. Let
    $J'=J/\mu_N$ where $\mu_N\subseteq A\subseteq J$. In this way, we obtain a
    commutative  diagram
    $$\xymatrix{
        0\ar[r]&\mu_N\ar[r]\ar[d]&A\ar[r]\ar[d]&A'\ar[r]\ar[d]&0\\
        0\ar[r]&\mu_N\ar[r]&J\ar[r]&J'\ar[r]&0
    }$$
    of exact sequences.
    There is an associated commutative diagram
    $$\xymatrix{
        \H^1(\Spec k,A')\ar[r]^\delta\ar[d]&\H^2(\Spec k,\mu_N)\ar@{=}[d]\\
        \H^1(S,J')\ar[r]^\delta&\H^2(S,\mu_N)
    }$$
    of boundary maps. If $X_\sigma$ is the $J'$-torsor associated to
    $\sigma\in\H^1(\Spec k,A')$ via the left vertical map, then commutativity of
    the diagram implies that $X$ splits $\delta(\sigma)=\beta$, as desired.
\end{proof}

\begin{remark}
    The entire proof goes through with $k$ replaced by a local ring $R$.
    Indeed, the only subtle point is the existence of
    a finite \'etale morphism $R\rightarrow R'$ such that
    $\beta$ is split by $R'$, which is proven
    in~\cite[Thm.~6.3]{auslander-goldman}.
\end{remark}

\subsection{Proof of Theorem~\ref{thm:a}}
\label{subsec:proof_A}

Let $E$ be an elliptic curve over a field $k$.  As in
Section~\ref{subsec:mu_n-isogeny}, let $\varphi\colon E \to E'$ be a
$\mu_N$-isogeny.  Then the short exact sequence \eqref{eq:liftpoint}
induces a boundary map $\H^1(\Spec k, \bZ/N) \to \H^{2}(\Spec k,
\mu_N)$. 
By Lemma~\ref{lem:coboundary}, this is given by taking the cup product
with $\delta(P)\in\H^1(\Spec k,\mu_N)$ where $P\in E'(k)/NE'(k)$ and
$\delta\colon E'(k)/NE'(k)\rightarrow\H^1(\Spec k,\mu_N)$.  The
homomorphism $\bZ/N \to E'$ induced from the dual isogeny determines a
map $\chi \mapsto X_\chi$ on cohomology $\H^1(\Spec k, \bZ/N) \to
\H^1(\Spec k, E')$.

Lemmas~\ref{lem:split_delta} and \ref{lem:delta_symbol} show that the
$E'$-torsor $X_\chi$ splits $\chi \cup \delta(P)=-\delta(P)\cup\chi$. We give in
Figure~\ref{fig:flambda} some discriminants of explicit families of
elliptic curves, i.e., elliptic curves $E$ defined over a rational
function field $k(\lambda)$, with exact order $N$-points $P$ together
with a calculation of explicit elements in $k[\lambda]$ that represent
$\delta(P) \in k(\lambda)^\times/k(\lambda)^{\times N}$. These
families are all defined over $\bZ$ away from the discriminant locus
(except when $N=2$ and we give two families) and they all admit points
over fields of every characteristic (again, except for
$N=2$). Verification of these properties is given in
Appendix~\ref{app}.

\begin{figure}[h]
    \begin{center}
        \footnotesize
        \begin{tabular}{|c|c|c|}
            \hline
            $N$ & $\Delta(\lambda)$ & $\delta(P)$\\
            \hline
            \hline
            $2$, $p\neq 2$ & $256\lambda^4-64\lambda^3$ & $\lambda$\\
            \hline
            $2$, $p=2$ & $\lambda^2$ & $\lambda$\\
            \hline
            $3$ & $(1-27\lambda)\lambda^3$ &$\lambda^2$\\
            \hline
            $4$ & $-16\lambda^5+\lambda^4$ & $\lambda^3$\\
            \hline
            $5$ &
            $\lambda^5(\lambda^2-11\lambda-1)$ & $\lambda^4$\\
            \hline
            $6$ &
            $\lambda^6(\lambda-1)^3(9\lambda-1)$ & $\lambda^5(\lambda-1)^4$\\
            \hline
            $7$ & 
            $-\lambda^7(\lambda-1)^7(\lambda^3+5\lambda^2-8\lambda+1)$ & $\lambda^6(\lambda-1)^3$\\
            \hline
            $8$ & 
            $\lambda^8(\lambda-1)^4(\lambda^2-6\lambda+1)/(\lambda+1)^{10}$ &$\lambda^7(\lambda-1)^6(\lambda+1)^4$\\ 
            \hline
            $9$ &
            $\lambda^9(\lambda+1)^9(\lambda^2+\lambda+1)^3(\lambda^3-3\lambda^2-6\lambda-1)$ & $\lambda^8(\lambda+1)^5(\lambda^2+\lambda+1)^6$\\
            \hline
            $10$ &
            $\lambda^{10}(\lambda+1)^{10}(2\lambda+1)^5(4\lambda^2+6\lambda+1)/(\lambda^2-\lambda-1)^{10}$ &$\lambda^9(\lambda+1)(2\lambda+1)^8(\lambda^2-\lambda-1)^5$\\
            \hline
            $12$ &
            $\lambda^{-24}(\lambda-1)^{12}(2\lambda-1)^6(3\lambda^2-3\lambda+1)^4(2\lambda^2-2\lambda+1)^3(6\lambda^2-6\lambda+1)$ & $-\lambda^{11}(\lambda-1)^{11}(2\lambda-1)^{10}(2\lambda^2-2\lambda+1)^8(3\lambda^2-3\lambda+1)^9$\\
            \hline
        \end{tabular}
    \end{center}
    \caption{The discriminants and boundary values $\delta(P)$ for various families of elliptic curves
    $E'$ with exact order $N$ points $P$. See the Appendix
    for the Weierstrass equations and details on the calculation.}
    \label{fig:flambda}
\end{figure}

\begin{proof}[Proof of Theorem~\ref{thm:a}]
  Assume that $N=2,3,4,$ or $5$ and that $\beta \in \H^2(\Spec k, \mu_N)$ is
    cyclic. Specifically, $\beta=\chi\cup u$ for some $\chi\in\H^1(\Spec
    k,\bZ/N)$ and $u\in k^\times$ representing an element of
    $k^\times/(k^\times)^N\iso\H^1(\Spec k,\mu_N)$.
  We give the proof of the theorem in the $N=5$ case; the proofs for $N=2,3,$ and $4$
  are similar. The $N=5$ line of Table~\ref{fig:flambda} describes the
    discriminant and boundary value $\delta(P)\in\H^1(\Spec k,\mu_N)$ of a family of elliptic
  curves with a fixed $5$-torsion point at $P=(0,0)$.  We claim that
  we can choose $\lambda \in k^\times$ such that
    \begin{itemize}
        \item $\lambda$ and $u$ generate the same subgroup of $k^\times/(k^\times)^5$ and
        \item $\Delta(\lambda)\neq 0$.
    \end{itemize}
    We could let $\lambda=u$ to arrange for the first condition, but we have to check also that
    $\Delta(u)=u^{5}(u^{2}-11 u-1)$ is non-zero. Since $u$ is a unit, this
    is equivalent to $u^2-11 u-1\neq 0$ in $k$. However, of course,
    sometimes this will vanish for particular $u$. Any number of the form
    $\lambda=uv^5$ where $v\in k^\times$ will also satisfy the first condition. If
    $k$ is infinite, this gives infinitely many possibilities for $\lambda$, at
    most $2$ of which will have vanishing discriminant, so we can find some
    number of the form $uv^5$ such that $\Delta(uv^5)\neq 0$. If $k$ is finite,
    then $\beta=0$ and there is nothing to prove. In any case, letting
    $\lambda=uv^5$ be a choice which satisfies the two criteria above, we find
    by Lemmas~\ref{lem:split_delta} and \ref{lem:delta_symbol} and by
    Figure~\ref{fig:flambda} that $X_\chi$
    splits $\chi\cup u^4$ and thus also $\chi\cup u$, as desired. 

    Now, fix $N=6,7,8,9,10,$ or $12$ and assume that $k$ is global.
    By the theorem of Albert--Brauer--Hasse--Noether
    (\cite{brauer_hasse_noether}, \cite{hasse:reciprocity}, see for
    example~\cite[XIII.6]{weil:basic_number_theory}, \cite{pierce}), every $\beta\in\H^2(\Spec k,\mu_N)$ is cyclic, of the form
    $\chi\cup u$ for some order $N$ character $\chi$ and some unit
    $u$.  Recall for instance from~\cite{tate-global} the fundamental exact sequence of
    class field theory
    $$0\rightarrow\Br(k)\rightarrow\bigoplus_{\mathfrak{p}}\Br(k_{\mathfrak{p}})\rightarrow\bQ/\bZ\rightarrow
    0,$$ where the direct sum ranges over all places of $k$, both
    archimedean and non-archimedean. We claim we can find $\lambda$
    such that the field extension $k(\delta(P)^{1/N})$ splits
    $\beta$. Let $S$ be the set of places $\mathfrak{p}$ in the
    support of $\beta$, i.e., where the associated Brauer class
    $\alpha$ is non-zero in $\Br(k_\pfrak)$. By standard ramification
    theory, in order for $k(\delta(P)^{1/N})$ to split $\beta$, it is
    enough to find $\lambda$ such that if $\mathfrak{p}\in S$ is
    finite, then $v_\pfrak(\delta(P))$ is a unit modulo $N$ (where we
    normalize so that $v_\pfrak$ takes integer values) and if
    $\mathfrak{p}\in S$ is a real place (and without loss of generality
    $N$ is even), then $\delta(P)$ is negative in the corresponding
    real embedding (since $\delta(P)$ is well-defined up to $N$th
    powers, for even $N$ the sign of $\delta(P)$ makes sense).  For
    $N = 6,7,8,9,10,$ or $12$, since
    $v_\pfrak(\delta(P)) \equiv (N-1)v_\pfrak(\lambda) \pmod{N}$ it suffices to
    choose $\lambda$ such that
    $v_\pfrak(\lambda)$ is a unit modulo $N$ for each finite
    place $\pfrak$ in $S$. For any real place $\pfrak$ in $S$, it
    suffices to take: $\lambda$ very negative for $N=6,8$, since
    $\delta(P)$ is an odd function with positive leading coefficient;
    $|\lambda-1|_\pfrak < \frac{1}{2}$ for
    $N=10$, since $\delta(P)$ is negative in that region; and
    $\lambda$ very large for $N=12$, since $\delta(P)$ is an even
    function with negative leading coefficient.  By weak
    approximation, we can arrange for all of these conditions to be
    satisfied by some $\lambda \in k$.

    It follows, in each of these cases, that $k(\delta(P)^{1/N})$ splits $\beta$. Now, theorems
    of Albert~\cite{albert} when $N$ is prime or Vishne~\cite{vishne} and
    Min\'{a}\v{c}--Wadsworth~\cite{minac-wadsworth}
    for $N$ composite (under the hypotheses of the theorem), imply that since $\beta$ is split by
    $k(\delta(P)^{1/N})$ we have in fact $\beta=\chi'\cup\delta(P)$ for some
    character $\chi'$. Thus, the curve $X_{\chi'}$ splits $\beta$, which is
    what we wanted to show.
\end{proof}

\subsection{Proof of Theorem~\ref{thm:b}}

\begin{proof}[Proof of Theorem~\ref{thm:b}]
    It is enough to know that under the conditions of the theorem, every class
    $\beta\in\H^2(\Spec k,\mu_N)$ is a sum of cyclic classes, for then we can
    split by a product of genus $1$ curves using Theorem~\ref{thm:a}. Using
    prime decomposition, we can separately handle $N=2,3,4$, and $5$. If the
    characteristic $p$ of $k$ is non-zero and divides $N$, then the result is
    due to Teichm\"uller; see~\cite[Thm.~9.1.4]{gille-szamuely}. So, assume
    that $p$ is prime to $N$. In this case, the fact results from Merkurjev's
    theorems~\cite{merkurjev-norm,merkurjev-fields} when $N=2$ or $3$, from the
    Merkurjev--Suslin theorem~\cite{merkurjev-suslin} when $N=4$
    and $k$ contains a primitive $4$th root of unity, and from a theorem of
    Matzri~\cite{matzri} when $N=5$.  See \cite[Section~3]{ketura} for
    further discussion.
\end{proof}

\subsection{Proof of Theorem~\ref{thm:c}}\label{sec:thmc}

We need the following easy lemma to begin.

\begin{lemma}\label{lem:pio}
    Suppose that $Y\in\H^1(S,E[N])$ is an $E[N]$-torsor with
    $Ob_\Lscr(Y)=\alpha \in \H^2(S,\Gm)$. If $X\in\H^1(S,E)$ is the $E$-torsor associated to
    $Y$, then $X$ splits $\alpha$.
\end{lemma}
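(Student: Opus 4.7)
The plan is to realize $\alpha$ as the Brauer class of a Severi--Brauer variety $P$ that contains $X$ as a closed subscheme, and then conclude by the basic fact that a Severi--Brauer variety splits its own Brauer class. The key input is the commutative diagram of central extensions~\eqref{eq:theta}. By functoriality of the nonabelian connecting maps, the obstruction map $Ob_\Lscr\colon \H^1(S, E[N]) \to \H^2(S, \Gm)$ from the top row agrees with the composition
$$
\H^1(S, E[N]) \to \H^1(S, \PGL_N^\Lscr) \to \H^2(S, \Gm),
$$
where the first map is induced by the subgroup inclusion $E[N] \hookrightarrow \PGL_N^\Lscr$ and the second is the connecting map for the bottom row. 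Consequently, the image of $Y$ in $\H^1(S, \PGL_N^\Lscr)$ classifies a twisted form $P$ of $\bP(p_*\Lscr)$ whose Brauer class in $\H^2(S, \Gm)$ is exactly $\alpha$.

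Next, I would produce a closed immersion $X \hookrightarrow P$. The linear system $|\Lscr|$ gives an $S$-embedding $E \hookrightarrow \bP(p_*\Lscr)$ which is equivariant for the action of $E[N]$ on $E$ by translation (this is precisely why translations by $N$-torsion sections extend to $\bP(p_*\Lscr)$, yielding the inclusion $E[N] \hookrightarrow \PGL_N^\Lscr$ appearing in~\eqref{eq:theta}). Twisting this equivariant closed immersion by the $E[N]$-torsor $Y$ produces a closed immersion $X \hookrightarrow P$ of $S$-schemes, since $X$ is, by construction, the twist of $E$ along $E[N] \to E$ by $Y$, while $P$ is the twist of $\bP(p_*\Lscr)$ along $E[N] \hookrightarrow \PGL_N^\Lscr$ by $Y$.

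Finally, since $P$ is a Severi--Brauer variety of relative dimension $N-1$ with Brauer class $\alpha$, the pullback of $\alpha$ to $\Br(P)$ vanishes; restricting further along $X \hookrightarrow P$ yields $\alpha|_X = 0$, i.e., $X$ splits $\alpha$.

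The only substantive work is the bookkeeping of the two compatibilities: that the two connecting maps $\H^1(S, E[N]) \to \H^2(S, \Gm)$ obtained from the two rows of~\eqref{eq:theta} agree, and that twisting preserves the equivariant embedding. Both are formal consequences of the functoriality of nonabelian cohomology and of the associated bundle construction, but they must be handled with some care because the top row of~\eqref{eq:theta} is a central, nonabelian extension rather than a short exact sequence of abelian sheaves. Neither step should present a genuine obstacle.
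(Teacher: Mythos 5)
Your proof is correct and follows essentially the same route as the paper: twist the $E[N]$-equivariant flag $E[N]\subseteq E\subseteq\bP(p_*\Lscr)$ by $Y$ to obtain $Y\subseteq X\subseteq P$, where $P$ is a Severi--Brauer variety over $S$ with class $Ob_\Lscr(Y)=\alpha$, and conclude since $\alpha$ restricts to zero on $P$ and hence on $X$. Your write-up simply makes explicit the two functoriality checks (compatibility of the nonabelian connecting maps from the two rows of~\eqref{eq:theta}, and compatibility of twisting with the equivariant embedding) that the paper leaves implicit.
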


\begin{proof}
  The group $E[N]$ acts on the flag
  $E[N]\subseteq E\subseteq\bP(p_*\Lscr)$. Thus, to the $E[N]$-torsor
  $Y$, we get a twisted form $P$ of $\bP(p_*\Lscr)$, a Severi--Brauer
  scheme over $S$
  variety with Brauer class $\alpha=Ob_\Lscr(Y)$, together with a flag
  of subvarieties $Y\subseteq X\subseteq P$ where $X$ is the genus
  $1$-curve associated to $Y$. Thus, $X$ splits the class $\alpha$.
\end{proof}

Note that Example~\ref{ex:bigprimes} shows that in general $X$ does not split the
$\mu_N$-gerbe $\beta$ associated to $\alpha$ in the context of Lemma~\ref{lem:pio}.

\begin{proof}[Proof of Theorem~\ref{thm:c}]
    Fix $k$ and $E$ as in the statement of the theorem and fix a full level $N$ structure
  $\varphi\colon E[N] \to \bZ/N\times\mu_N$, which we can take to be
  symplectic since we are working over a field. Let $P$ be the exact order $N$
    point $\varphi^{-1}(1\times 1)$ and let $\Lscr=\Oscr(0_E+P+\cdots+(N-1)P)$.
    By Proposition~\ref{prop:oneilerratum}, given an $E[N]$-torsor $Y$, which we
  can represent as a pair $\chi\in\H^1(\Spec k,\bZ/n)$ and
  $u\in\H^1(\Spec k,\mu_N)$, the period-index obstruction class
  $Ob_\Lscr(Y)$ is of the form $Ob_\Lscr(Y)=[\chi+\sigma,u]$, where
    $\sigma\in\H^1(\Spec k,\bZ/n)$ depends only
    on $E$, $N$, and $\phi$. In particular, if $Y$ corresponds to a pair
    $(\chi-\sigma,u)$, then $Ob_\Lscr(Y)=[\chi,u]$. By Lemma~\ref{lem:pio}, it follows that the
    $E$-torsor $X$ corresponding to $Y$ splits $[\chi,u]$, which completes the
    proof.
\end{proof}

\appendix
\section{Fisher's method}\label{app}

Fisher~\cite{fisher} has developed a method to compute $\delta(P)$, as
in Section~\ref{subsec:proof_A}, in the case of cyclic 5- and
7-isogenies.  After some correspondence, Fisher provided a helpful
explanation of his method, which we elaborate on here and carry out
for $N=4,\dotsc, 10, 12$ using \texttt{MAGMA} and Sutherland's modular
families~\cite{sutherland,sutherland:table}. This is used to populate
the table in Figure~\ref{fig:flambda}. Fisher's method works for
$N\geq 4$, hence for $N=2$ and $N=3$, we must utilize other
construction in the literature.

\subsection{Computing the boundary}

We recall the situation of Section~\ref{subsec:mu_n-isogeny}. Fix an
integer $N>1$. Let $E'$ be an elliptic curve over a field $k$ with a
rational point $P$ of exact order $N$, which generates a subgroup
$\ZZ/N\subseteq E'(k)$.  Let $E'\xrightarrow{\phi'}E$ be the isogeny
whose kernel is this subgroup. The kernel of the dual isogeny
$E\xrightarrow{\phi}E'$ is isomorphic to $\mu_N$ as a group scheme. We
want to compute the boundary map in the exact sequence
$$0\rightarrow\mu_N(k)\rightarrow E(k)\rightarrow E'(k)\xrightarrow{\delta} \H^1(\Spec
k,\mu_N)\iso k^\times/k^{\times N}.$$

\begin{lemma}
\label{lem:f}
    There is a rational function $f$ on $E'$ and a rational function $g$ on $E$
    such that $\mathrm{div}(f)=N(P)-N(0)$ and $f\circ\phi=g^N$.
\end{lemma}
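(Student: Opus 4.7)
\medskip

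The plan is to produce $f$ by Abel's theorem, then show that $\phi^* f$ is necessarily an $N$-th power up to a scalar, which we can absorb by rescaling $f$.

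First, since $P\in E'(k)$ has exact order $N$, the degree-zero divisor $D=N(P)-N(0)$ has Abel--Jacobi image $N\cdot P=0$ in $E'$, so by Abel's theorem on the elliptic curve $E'$ there is a rational function $f\in k(E')^\times$, unique up to $k^\times$, with $\mathrm{div}(f)=D$. This produces the desired $f$.

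Next, I would analyze $\phi^* f$ through its divisor. We have
\[
\mathrm{div}(f\circ\phi)=\phi^*\mathrm{div}(f)=N\bigl(\phi^*(P)-\phi^*(0)\bigr),
\]
and I claim the class $\bigl[\phi^*(P)-\phi^*(0)\bigr]\in\mathrm{Pic}^0(E)$ is trivial. Under the canonical Abel--Jacobi identifications $E'\cong\mathrm{Pic}^0(E')$ and $E\cong\mathrm{Pic}^0(E)$ sending $R\mapsto[(R)-(0)]$, pullback along $\phi$ on $\mathrm{Pic}^0$ is identified with the dual isogeny $\widehat{\phi}=\phi'\colon E'\to E$. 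Since $P$ is the chosen generator of $\ker(\phi')$ we have $\phi'(P)=0$, so $\phi^*\bigl[(P)-(0)\bigr]=[(0)-(0)]=0$. Hence $\phi^*(P)-\phi^*(0)$ is principal on $E$; write it as $\mathrm{div}(h)$ for some $h\in k(E)^\times$ (rationality over $k$ follows since the divisor is $k$-rational and $E$ has the rational point $0$).

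It follows that $\mathrm{div}(\phi^* f)=N\,\mathrm{div}(h)=\mathrm{div}(h^N)$, so $\phi^* f=c\,h^N$ for some scalar $c\in k^\times$. Replacing the original $f$ by $c^{-1}f$ (permissible since $f$ is only defined up to $k^\times$ by its divisor) and setting $g=h$ gives $f\circ\phi=g^N$ as required.

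The only delicate point is the identification of $\phi^*$ on Picard groups with the dual isogeny; this is the standard autoduality of elliptic curves with their canonical principal polarizations, and crucially it holds scheme-theoretically, so the argument is insensitive to whether $\mu_N$ is étale (i.e.\ it survives in characteristics dividing $N$). The scalar-absorption step at the end is essential: without exploiting the freedom to rescale $f$ one only obtains $\phi^* f=c\,g^N$, and $c$ need not be an $N$-th power in $k$.
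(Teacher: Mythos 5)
Your proof is correct, and it follows the same overall skeleton as the paper's: produce $f$ with the prescribed divisor via Abel's theorem on $E'$, show that $\phi^*\bigl((P)-(0)\bigr)$ is a principal divisor on $E$, observe that $\mathrm{div}(\phi^*f)$ is then $N$ times a principal divisor, and absorb the resulting scalar by rescaling $f$. Where you diverge is in the justification of the middle step. The paper picks an $N$-torsion lift $Q\in E(\bar k)$ of $P$ (such a lift exists because $\phi(E[N])=\ker\phi'=\langle P\rangle$), writes $\phi^*\bigl((P)-(0)\bigr)=\sum_{R\in\ker\phi}\bigl((Q+R)-(R)\bigr)$, and concludes principality from the group-law sum being $NQ=0$. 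You instead invoke the autoduality identification $\phi^*=\widehat\phi=\phi'$ on $\mathrm{Pic}^0$ and use $\phi'(P)=0$. Both are valid; yours is a bit more conceptual and has the small advantage of being manifestly scheme-theoretic, so it requires no reinterpretation of $\sum_{R\in\ker\phi}$ when $\ker\phi=\mu_N$ is non-reduced in characteristic $p\mid N$ (a case the paper does care about, as it later reduces to characteristic zero in Lemma~\ref{lem:coboundaryalg}), while the paper's is a shade more elementary. Your attention to the rescaling step and to rationality of $h$ over $k$ are both appropriate and match what the paper needs.
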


\begin{proof}
    The existence of $f$ follows from the fact that the divisor $N(P)-N(0)$
    has degree $0$ and has sum $0$ in the group law on $E$. Now, let $Q$ be an
    $N$-torsion
    lift of $P$ to $E$, say over the algebraic closure. We have
    $\phi^*((P)-(0))=\sum_{R\in\ker(\phi)}((Q+R)-(R))$, which is the divisor of
    some function $g$ on $E$ since $NQ=0$. Now,
    $\mathrm{div}(f\circ\phi)=\mathrm{div}(g^N)$, so that after rescaling $f$
    by an element of $k^\times$, we can assume that $f\circ\phi=g^N$.
\end{proof}

\begin{lemma}\label{lem:coboundaryalg}
    Let $N\geq 3$ and let $f$ be a choice of rational function as in Lemma~\ref{lem:f}.
  Suppose that $P'\in E'(k)$ is a point not equal to $0$ or $P$, so
  that $f$ has neither a zero nor pole at $P'$. Then, $\delta(P')$ and
  $f(P')$ generate the same subgroup of $k^\times/k^{\times N}$. If
  moreover $P'=aP$ for some integer $a\in\{2,\ldots,N-1\}$, then
    $$\delta(P')\equiv f(P')$$ in $k^\times/k^{\times N}$.
\end{lemma}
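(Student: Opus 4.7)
The plan is to make the boundary map explicit by representing $\delta(P')$ with a Galois cocycle and then using the function $g$ to produce a matching Kummer-theoretic cocycle for the class of $f(P')$ in $k^\times/k^{\times N}$. Fix a geometric lift $Q' \in E(\bar{k})$ of $P'$, so that $\delta(P') \in \H^1(k, \mu_N)$ is represented by the $1$-cocycle $c_\sigma := \sigma Q' - Q' \in \ker(\phi) = \mu_N$. Because $g(Q')^N = (f \circ \phi)(Q') = f(P') \in k^\times$, the element $g(Q') \in \bar{k}^\times$ is an explicit $N$th root of $f(P')$, and the class of $f(P')$ in $k^\times/k^{\times N}$ corresponds under the Kummer identification to the cocycle $\sigma \mapsto \sigma g(Q')/g(Q')$.

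To compare the two cocycles, I would introduce the translation character $\psi\colon \ker(\phi) \to \mu_N$ defined by $g \circ t_\tau = \psi(\tau)\, g$; this is well-defined because $(g \circ t_\tau)^N = (f \circ \phi \circ t_\tau) = (f \circ \phi) = g^N$, so $g \circ t_\tau/g$ is a constant in $\mu_N(\bar{k})$. The homomorphism $\psi$ is independent of the scalar ambiguity in $g$ and, being nonzero on nonzero $\tau$ since $g$ is nonconstant, is an isomorphism. Substituting, $\sigma g(Q') = g(Q' + c_\sigma) = \psi(c_\sigma)\, g(Q')$, so the Kummer cocycle of $g(Q')$ is $\psi(c_\sigma)$.

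Since $\psi$ is an isomorphism, it differs from the fixed Cartier-duality identification $\ker(\phi) \cong \mu_N$ by at most an element of $\Aut(\mu_N) = (\bZ/N)^\times$, and hence the classes $\psi(c_\sigma)$ and $c_\sigma$ generate the same cyclic subgroup of $\H^1(k, \mu_N) = k^\times/k^{\times N}$, giving the first assertion. For the second assertion, take an $N$-torsion lift $Q \in E(\bar{k})$ of $P$ and use $Q' = aQ$, so $c_\sigma = a(\sigma Q - Q)$. The key step is then to identify $\psi$ with the canonical Cartier-duality isomorphism: by Weil's classical construction of the pairing $\ker(\phi) \times \ker(\hat\phi) \to \mu_N$ via translations of functions whose divisors are supported on kernel orbits---which is exactly the setup built into $f$ and $g$ in Lemma~\ref{lem:f}---$\psi$ coincides with the Cartier-duality isomorphism determined by the generator $P$ of $\ker(\hat\phi) \cong \bZ/N$. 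With this identification, $\sigma g(aQ)/g(aQ) = c_\sigma$ exactly, so $f(aP) \equiv \delta(aP)$ in $k^\times/k^{\times N}$. Pinning down this identification is the principal technical obstacle; for the weaker ``same subgroup'' statement one needs only that $\psi$ is an isomorphism, but the exact equality in the $P' = aP$ case depends on Weil's divisor-theoretic computation of the pairing.
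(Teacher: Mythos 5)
Your approach -- representing $\delta(P')$ by a Galois $1$-cocycle $c_\sigma = \sigma Q' - Q'$, computing the Kummer cocycle of $f(P') = g(Q')^N$ as $\psi(c_\sigma)$ via the translation character $\psi(\tau) = g\circ t_\tau/g$, and then comparing $\psi$ to the canonical Cartier-duality isomorphism -- is essentially the ``cocycle proof'' that the paper cites (to Fisher, Lem.~1.4, and Silverman, Sec.~III.8) for the second assertion. The paper itself does something genuinely different, and the differences matter.

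First, your argument is carried out in Galois cohomology with a lift $Q'\in E(\bar k)$, which silently assumes $N$ is invertible in $k$. When $\mathrm{char}(k) = p$ divides $N$, the sheaf $\mu_N$ is not \'etale and $\H^1_{\mathrm{fppf}}(\Spec k,\mu_N)$ is not computed by Galois cocycles, so the whole dictionary breaks down. The paper needs the lemma in this generality (the families in Figure~1 are meant to work in almost every characteristic), and this is precisely why the paper proves the first assertion by a purely fppf torsor argument (reduce to $N=p$ prime, note that $f(P')$ becomes an $N$th power over the $\mu_p$-torsor trivializing $\delta(P')$, hence there is a $k$-morphism between the two torsors, which must be an isomorphism), and proves the second assertion by lifting the ordinary elliptic curve $E$ to a characteristic-$0$ DVR and reducing to the $N$-invertible case. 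Neither of these maneuvers appears in your proposal, so the proposal does not establish the lemma as stated.

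Second, your justification that $\psi$ is an isomorphism -- ``being nonzero on nonzero $\tau$ since $g$ is nonconstant'' -- does not hold up. If $\psi(\tau)=1$ for a nonzero $\tau$, this only says $g$ is invariant under translation by the subgroup generated by $\tau$; it does not say $g$ is constant. What one can deduce from the nonconstancy of $g$ (more precisely, from $(P)-(0)$ not being principal on $E'$) is that $\psi$ is nontrivial, but a nontrivial homomorphism $\ker\phi\to\mu_N$ of cyclic group schemes of order $N$ need not be injective. Injectivity of $\psi$ is equivalent to the perfectness of the Weil pairing $e_\phi$ paired against the generator $P$, which is a real (if standard) input and cannot be dismissed this way. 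Finally, you candidly flag that identifying $\psi$ with the fixed Cartier-duality isomorphism is ``the principal technical obstacle,'' and then assert it by appeal to Weil's classical construction. That assertion is precisely the content of the second claim of the lemma, so without carrying it out (or citing the computation, as the paper does) the key step of the second assertion remains open.
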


\begin{proof}
  First, we prove that $\delta(P')$ and $f(P')$ generate the same
  subgroup of $k^\times/(k^\times)^N$.  We can assume that $N$ is a
  prime power and then assume that $N=p$, a prime, by suitably
  factoring the isogeny $\phi$.  The classes $\delta(P')$ and $f(P')$
  in $\H^1(\Spec k, \mu_p) \iso k^\times/k^{\times N}$ correspond to
  $\mu_p$-torsors $U$ and $V$ over $\Spec k$. These classes generate
  the same subgroup of $k^\times/k^{\times N}$ if and only if there is
  a $k$-isomorphism $U \to V$ (note that this $k$-isomorphism is
  $\mu_p$-equivariant if and only if the classes are equal).  It would
  be sufficient to have a $k$-morphism $U\rightarrow V$, which will
  then be an isomorphism since we are working with torsors for a prime
  order group. But, over $U$, the class $\delta(P')$ is zero, so that
  there is a lift along $\phi$ of $P'|_U$ to an element $Q'\in
  E(U)$. In this case, $f(P')=f(\phi(Q'))=g(Q')^N$, so that $f(P')$ is
  an $N$th-power in the group of units of $U$.  Thus $V$ admits a
  $U$-point, in other words, there is a morphism $U \to V$. Therefore,
  $\delta(P')\equiv f(P')^l$ for some $l\in(\bZ/N)^\times$.

  Now, we show that $l\equiv 1$ in $(\bZ/N)^\times$ when $P' = aP$.
  If $N$ is invertible in $k$, then this follows from a cocycle proof
  given in Fisher~\cite[Lem.~1.4]{fisher} or
  Silverman~\cite[Sec.~III.8]{silverman}. However, we can reduce to this case
  by lifting $E$ (which is ordinary) to characteristic $0$.
    Specifically, there is a $p$-complete, $p$-torsion-free discrete valuation
    ring $R$ with residue field $k$ (for example by~\cite[Thm.~29.1]{matsumura}) and a lift of $E$ to
    $R$ (lift a Weierstrass equation). The point $P$ also lifts by Hensel's
    lemma (applied to the moduli stack $\Mscr_1(N)$, which is a scheme) and the function $f$ does too. If $K$ denotes the fraction field of
    $R$, then we can compare $\delta(P')$ and $f(P')$ in the diagram
    $k^\times/(k^\times)^N\leftarrow R^\times/(R^\times)^N\hookrightarrow
    K^\times/(K^\times)^N$
    to deduce that $\delta(P')$ and $f(P')$ agree by reduction to the $N$
    invertible case.
\end{proof}

In order to compute the coboundary using
Lemma~\ref{lem:coboundaryalg}, one must find the function $f$ and
check that it is normalized correctly, so that $f\circ\phi=g^N$. A
completely general way to find the unnormalized function $f$ is via
Miller's algorithm, explained in~\cite[Sec.~11.8]{silverman}. But,
this is overkill for our present purposes, and we explain Fisher's
method for finding the unnormalized function $f$, which can be
implemented with a simple \texttt{MAGMA} script displayed in
Figure~\ref{fig:fisher}.

\begin{figure}[h]
    \begin{verbatim}
        > K<la> := FunctionField(Rationals());
        > E := EllipticCurve([1-la,-la,-la,0,0]);
        > V,Vmap := RiemannRochSpace(5*Divisor(E!0) - 5*Divisor(E![0,0]));
        > assert Dimension(V) eq 1;
        > KE<x,y> := FunctionField(E);
        > f := Vmap(V.1);
        > print f;
        (-x - 1)*y + x^2
    \end{verbatim}
    \caption{Fisher's \texttt{MAGMA} code, applied in the case of $N=5$ below.}
    \label{fig:fisher}
\end{figure}

Once a rational function $f$ on $E$ with the correct divisor
$N(P)-N(0)$ has been found, one normalizes it as follows. Let $uf$ be
a `correct' function, so that $(uf)\circ\phi=g^N$ for some rational
function $g$ on $E$. Here, $u$ is a unit in the base field $k$. Since
$\delta$ is a homomorphism, we can, for example, compute the
difference between $f(4P)$ and $f(2P)^2$, at least for $N\gg 0$.

Specifically, it is enough to find $uf$ up to $N$th powers, which we
do via the following lemma.

\begin{lemma}
    Fix an integer $N>2$.
    Choose integers
    $a,b,A,B$ such that the following hold:
    \begin{enumerate}
        \item[{\rm (a)}] $A+B\equiv 1\pmod{N}$,
        \item[{\rm (b)}] $aA+bB\equiv 0\pmod{N}$, and
        \item[{\rm (c)}] $a,b\neq 0,1\pmod{N}$.
    \end{enumerate}
    Then, $$u\equiv f(aP)^{-A}f(bP)^{-B}\pmod{k^{\times N}}.$$
\end{lemma}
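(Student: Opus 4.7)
The plan is to combine Lemma~\ref{lem:coboundaryalg} with the fact that $\delta$ is a group homomorphism, and then use the chosen integers $a,b,A,B$ to extract $u$ modulo $N$th powers.

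First, I would observe that condition (c) guarantees $aP$ and $bP$ are neither the identity nor equal to $P$, so Lemma~\ref{lem:coboundaryalg} applies at these points and $f(aP), f(bP) \in k^\times$ are well-defined. Lemma~\ref{lem:coboundaryalg}, applied to the correctly normalized function $uf$, gives equalities
$$\delta(aP) \equiv (uf)(aP) = u\cdot f(aP) \pmod{k^{\times N}}, \qquad \delta(bP) \equiv u\cdot f(bP) \pmod{k^{\times N}}.$$
Next, since $\delta\colon E'(k)\to k^\times/k^{\times N}$ is a group homomorphism and $\delta$ takes values in a multiplicatively written group, we have $\delta(cP) \equiv \delta(P)^c$ for every integer $c$. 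Thus
$$u\cdot f(aP) \equiv \delta(P)^a, \qquad u\cdot f(bP)\equiv \delta(P)^b \pmod{k^{\times N}}.$$

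Now I would raise the first congruence to the power $A$, the second to the power $B$, and multiply them together, obtaining
$$u^{A+B}\, f(aP)^A f(bP)^B \equiv \delta(P)^{aA+bB} \pmod{k^{\times N}}.$$
By hypothesis (a), $A+B\equiv 1\pmod N$, so $u^{A+B}\equiv u$; by hypothesis (b), $aA+bB\equiv 0\pmod N$, so $\delta(P)^{aA+bB}\equiv 1$. Rearranging yields the desired identity $u\equiv f(aP)^{-A}f(bP)^{-B}\pmod{k^{\times N}}$.

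There is essentially no obstacle here once Lemma~\ref{lem:coboundaryalg} is in hand: the argument is a one-line linear algebra computation in $k^\times/k^{\times N}$, viewed as a $\ZZ/N$-module, exploiting that the two constraints on $(A,B)$ are exactly the conditions which isolate the $u$ factor while annihilating the unknown $\delta(P)$. The only subtlety to flag is that condition (c) is genuinely needed: it ensures both that the evaluations $f(aP), f(bP)$ make sense (no zeros or poles of $f$) and that Lemma~\ref{lem:coboundaryalg} can be invoked, as that lemma requires the test point to differ from $0$ and $P$.
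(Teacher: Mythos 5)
Your proof is correct and follows essentially the same route as the paper: both apply Lemma~\ref{lem:coboundaryalg} to the normalized function $uf$ at the points $aP$ and $bP$ (valid by condition (c)), use that $\delta$ is a homomorphism so $\delta(nP)\equiv\delta(P)^n$, and then take the weighted product with exponents $A,B$ so that condition (b) kills the unknown $\delta(P)$ while condition (a) isolates $u$. The paper merely compresses the two steps into the single displayed congruence $u^Af(aP)^Au^Bf(bP)^B\equiv 1$ before rearranging.
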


\begin{proof}
  We have $$u^Af(aP)^A u^B f(bP)^B\equiv 1 \pmod{k^{\times N}}$$ by
  (b) and the fact that $n\mapsto (uf)(nP)$ for $n\neq 0,1\pmod{N}$
  are the values of a homomorphism, so after re-arranging and using
  (a), we find the desired claim using Lemma~\ref{lem:coboundaryalg},
  which applies by~(c).
\end{proof}

\begin{example}
The lemma will not apply when $N=2$ or $N=3$ because no choice of integers
$a,b,A,B$ satisfies (a), (b), and~(c) in those cases. We will need to argue via
a different approach, which we do individually below. For $N=4$, we can take $a=3$, $A=2$,
$b=2$, $B=-1$. For $N\geq 5$, we can take $a=2$, $A=2$, $b=4$, $B=-1$.
\end{example}

\subsection{Calculations}

\subsubsection*{Overview}

Each section below gives a family (two families for $N=2$) of elliptic
curves $E'$ with an exact order $N$ point $P$ together with a
calculation of $\delta(nP)$ for $n=1,\ldots,N-1$.  For $N\geq 4$ and
$n=2,\ldots,N-1$, these values are obtained by computing the
normalized function $f$ as described above, which we possibly rescale
by an $N$th power for notational convenience. Then, by writing
$P=mP+nP$ where $m,n\in\{2,\ldots,N-1\}$ and using that
$\delta(P)=\delta(mP)\delta(nP)$, we can fill in the final value
$\delta(P)$ of the table. As $N$ gets larger, the formulas are more
complicated and we include less information.  For $N=2$ and $N=3$, we
refer instead to the literature for appropriate families and boundary
map calculations.  With the exception of the families at $N=2$, the
remainder work in all characteristics.

The reader may verify the calculations here for $N\geq 4$ by running {\ttfamily
magma N-families.magma} with the attached {\ttfamily MAGMA} files, which computes the normalized function $f$ and the
values of $f$ on $2P,\ldots,(N-1)P$. Note however that we have at times chosen
to simplify the tables by changing the output of $f$ by $N$th powers.

\subsubsection*{$N=2$}

We use two families of elliptic curves $E'$, i.e., elliptic curves
over the rational function field $k(\lambda)$, depending on the
characteristic of $k$. The first, when $k$ has characteristic not 2,
is the family $$y^2=x^3-4\lambda x^2+\lambda x,$$ which has
discriminant $\Delta(\lambda)=256\lambda^4-64\lambda^3$ and a point
$P=(0,0)$ of exact order $2$.  Silverman
gives $$\delta(P)\equiv\lambda\pmod{k^{\times 2}}$$
in~\cite[Prop.~X.4.9]{silverman}.  The second, when $k$ has
characteristic $2$, is defined by
$$y^2+xy=x^3+x^2+\lambda^2,$$ which has discriminant
$\Delta(\lambda)=\lambda^2$ and a point $P=(0,\lambda)$ of exact order $2$.
Kramer gives $$\delta(P) \equiv\lambda\pmod{k^{\times 2}}$$
in~\cite[Prop.~1.1(b)]{kramer}.

\subsubsection*{$N=3$}

Kozuma~\cite{kozuma} studies the elliptic curve $E'_\lambda$ over
$k(\lambda)$ defined by
$$y^2+xy+\lambda y=x^3$$ with an exact order $3$ point $P=(0,0)$. The
discriminant of this Weierstrass equation is $\Delta(\lambda)=(1-27\lambda)\lambda^3$.
Figure~\ref{fig:3} shows the values of the boundary map at the multiples of
$P$, which can be found in~\cite[Eq.~3.5]{kozuma}.

The discriminant is not identically zero modulo any prime $p$, so this family
and the corresponding calculation is available in all characteristics.

\begin{figure}[h]
    \begin{center}
        \begin{tabular}{|c|c|c|}
            \hline
            $nP$ & $(x_{nP},y_{nP})$ & $\delta(nP)$\\
            \hline
            $P$ & $(0,0)$ & $\lambda^2$\\
            $2P$ & $(0,-\lambda)$ & $\lambda$\\
            \hline
        \end{tabular}
    \end{center}
    \caption{The coordinates of the multiples of $P$ and the values of
    $\delta(nP)$ for $N=3$.}
    \label{fig:3}
\end{figure}

\subsubsection*{$N=4$}

For $N \geq 4$ we will use families of elliptic curves in Tate normal
form
\begin{equation}\label{eq:tate}
y^2+(1-c)xy-by=x^3-bx^2
\end{equation}
for values of $b,c \in k(\lambda)$.  For $N=4$, we take $c=0$, as
explained in \cite[V.5~(5.31)]{knapp}, and set $b=-\lambda$ to get the
elliptic curve
$$y^2+xy+\lambda y=x^3+\lambda x^2$$
over $k(\lambda)$, with a point of order $4$ at $P=(0,0)$ and discriminant  $\Delta(\lambda)=-16\lambda^5+\lambda^4$.

As an example of using Fisher's method, we start with the rational
function $f(x,y)=y-x^2$ having the correct divisor. However, there is
a normalization problem in this case.  Indeed, we have
$f(3P)=f(0,-\lambda)=-\lambda$, which implies that we should set
$f(P)=-\lambda^3$ (or $f(P)=-\tfrac{1}{\lambda}$). However,
$f(2P)=f(-\lambda,0)=-\lambda^2$ and we should
have $$\lambda^3=f(2P)f(3P)\equiv f(P)=-\lambda^3,$$ which is
typically false. So, following Fisher's algorithm, we can correct $f$
by multiplying by $-1$. Having thus found $f = x^2-y$, we can compute
$\delta(2P)=f(2P)$ and $\delta(3P)=f(3P)$, which we can use to find
$\delta(P)\equiv\delta(2P)\delta(3P)\pmod{k(\lambda)^{\times^4}}$.

\begin{figure}[h]
    \begin{center}
        \begin{tabular}{|c|c|c|}
            \hline
            $nP$ & $(x_{nP},y_{nP})$ & $\delta(nP)$\\
            \hline
            $P$ & $(0,0)$ & $\lambda^3$\\
            $2P$ & $(-\lambda,0)$ & $\lambda^2$\\
            $3P$ & $(0,-\lambda)$ & $\lambda$\\
            \hline
        \end{tabular}
    \end{center}
    \caption{The coordinates of the multiples of $P$ and the values of
    $\delta(nP)$ (equal to $f(nP)$ for $n=2,3$ where $f=x^2-y$) for $N=4$.}
    \label{fig:4}
\end{figure}

\subsubsection*{$N=5$}

We use the Tate normal form \eqref{eq:tate} with $b=c=\lambda$, as
in \cite[V.5~(5.31)]{knapp}, to get the elliptic curve
$$y^2+(1-\lambda)xy-\lambda y=x^3-\lambda x^2$$
over $k(\lambda)$ with a point of order $5$ point $P=(0,0)$ and
discriminant $\Delta(\lambda)=\lambda^7-11\lambda^6-\lambda^5$,
cf.~\cite{fisher}. Fisher's method produces the rational function
$f(x,y)=-x^2+xy+y$, which is already normalized (up to $5$th powers).

\begin{figure}[h]
    \begin{center}
        \begin{tabular}{|c|c|c|}
            \hline
            $nP$ & $(x_{nP},y_{nP})$ & $\delta(nP)$\\
            \hline
            $P$ & $(0,0)$ & $\lambda^4$\\
            $2P$ & $(\lambda,\lambda^2)$ & $\lambda^3$\\
            $3P$ & $(\lambda,0)$ & $-\lambda^2$\\
            $4P$ & $(0,\lambda)$ & $\lambda$\\
            \hline
        \end{tabular}
    \end{center}
    \caption{The coordinates of the multiples of $P$ and the values of
    $\delta(nP)$ (equal to $f(nP)$ for $n=2,3,4$, where
    $f=-x^2+xy+y$) for $N=5$.}
    \label{fig:5}
\end{figure}

\subsubsection*{$N=6$}

For $N \geq 6$, we will utilize the Tate normal form \eqref{eq:tate},
with the convention that 
\begin{equation}\label{eq:suth}
c = s(r-1),\qquad  b=rc
\end{equation}
for elements $r, s \in k(\lambda)$, as in
Sutherland~\cite{sutherland,sutherland:table}.  For $N=6$, we use
$r=1-\lambda$ and $s=1$ giving the elliptic curve
$$y^2+(1+\lambda)xy-(\lambda-\lambda^2) y=x^3-(\lambda-\lambda^2) x^2$$
over $k(\lambda)$ with an order $6$ point $P=(0,0)$ and discriminant
$\Delta(\lambda)=\lambda^6(\lambda-1)^3(9\lambda-1)$.


\begin{figure}[h]
    \begin{center}
        \begin{tabular}{|c|c|c|}
            \hline
            $nP$ & $(x_{nP},y_{nP})$ & $\delta(nP)$\\
            \hline
            $P$ & $(0,0)$ & $\lambda^5(\lambda-1)^4$\\
            $2P$ & $(\lambda(\lambda-1),-\lambda^2(\lambda - 1))$ &
            $\lambda^4(\lambda-1)^2$\\
            $3P$ & $(-\lambda,\lambda^2)$ &
            $\lambda^3$\\
            $4P$ & $(\lambda(\lambda-1),0)$ & $\lambda^2(\lambda-1)^4$\\
            $5P$ & $(0,\lambda(\lambda-1))$ & $\lambda(\lambda-1)^2$\\
            \hline
        \end{tabular}
    \end{center}
    \caption{The coordinates of the multiples of $P$ and the values of
    $\delta(nP)$ (equal to $f(nP)$ for $n=2,3,4,5$, where $f=-2xy-(1-\lambda) y+x^3
    + (1-\lambda) x^2$) for $N=6$.}
    \label{fig:6}
\end{figure}

\subsubsection*{$N=7$}

We use the Tate normal form \eqref{eq:tate} with convention
\eqref{eq:suth} $r=s=1-\lambda$, to get the
elliptic curve
$$y^2+(1+\lambda-\lambda^2)xy+\lambda(1-\lambda)^2
y=x^3+\lambda(1-\lambda)^2x^2$$ 
over $k(\lambda)$ with an order 7 point $P=(0,0)$ and
discriminant
$\Delta(\lambda)=-\lambda^7(\lambda-1)^7(\lambda^3+5\lambda^2-8\lambda+1)$,
cf.~\cite{fisher}.

\begin{figure}[h]
    \begin{center}
        \begin{tabular}{|c|c|c|}
            \hline
            $nP$ & $(x_{nP},y_{nP})$ & $\delta(nP)$\\
            \hline
            $P$ & $(0,0)$ & $\lambda^6(\lambda-1)^3$\\
            $2P$ & $(-\lambda(\lambda-1)^2,-\lambda^2(\lambda-1)^3)$ &
            $\lambda^5(\lambda-1)^6$\\
            $3P$ & $(\lambda(\lambda-1),-\lambda^2(\lambda-1))$ &
            $\lambda^4(\lambda-1)^2$\\
            $4P$ & $(\lambda(\lambda-1), \lambda^2(\lambda-1)^2)$ &
            $-\lambda^3(\lambda-1)^5$\\
            $5P$ & $(-\lambda(\lambda-1)^2,0)$ & $\lambda^2(\lambda-1)^8$\\
            $6P$ & $(0, -\lambda(\lambda-1)^2)$ & $\lambda(\lambda-1)^4$\\
            \hline
        \end{tabular}
    \end{center}
    \caption{The coordinates of the multiples of $P$ and the values of
    $\delta(nP)$ (equal to $f(nP)$ for $n=2,3,4,5,6$, where $f=-x^2y +
    (2\lambda-3)xy - (\lambda-1)^2y + \lambda x^3 + (\lambda-1)^2 x^2$) for
    $N=7$.}
    \label{fig:7}
\end{figure}

\subsubsection*{$N=8$}

We use the Tate normal form \eqref{eq:tate} with convention
\eqref{eq:suth} $r=1/(1+\lambda)$ and $s=1-\lambda$ yielding the
elliptic curve
$$
y^2+\bigl(1-\tfrac{\lambda(\lambda-1)}{\lambda+1}\bigr)xy-\tfrac{\lambda(\lambda-1)}{\lambda+1}
y=x^3-\tfrac{\lambda(\lambda-1)}{\lambda+1} x^2$$ over $k(\lambda)$
with an order $8$ point $P=(0,0)$ and discriminant
$\Delta(\lambda)=\lambda^8(\lambda-1)^4(\lambda^2-6\lambda+1)/(\lambda+1)^{10}$, cf.~\cite{sutherland,sutherland:table}.

\begin{figure}[h]
    \begin{center}
        \begin{tabular}{|c|c|c|}
            \hline
            $nP$ & $(x_{nP},y_{nP})$ &$\delta(nP)$\\
            \hline
            $P$  & $(0,0)$ & $\lambda^7(\lambda-1)^6(\lambda+1)^4$\\
            $2P$ & $(\lambda(\lambda-1)/(\lambda+1)^2,
                   \lambda^2(\lambda-1)^2/(\lambda+1)^3)$ & $\lambda^6(\lambda-1)^4$\\
            $3P$ & $(\lambda(\lambda-1)/(\lambda+1),
                   -\lambda^2(\lambda-1)/(\lambda+1)^2)$ & $\lambda^5(\lambda-1)^2(\lambda+1)^4$\\
            $4P$ & $(-\lambda/(\lambda+1)^2,
                   \lambda^2/(\lambda+1)^3)$ & $\lambda^4$\\
            $5P$ & $(\lambda(\lambda-1)/(\lambda+1),
                   \lambda^2(\lambda-1)^2/(\lambda+1)^2)$ & $\lambda^3(\lambda-1)^6(\lambda+1)^4$\\
            $6P$ & $(\lambda(\lambda-1)/(\lambda+1)^2,0)$ & $\lambda^2(\lambda-1)^4$\\
            $7P$ & $(0,\lambda(\lambda-1)/(\lambda+1)^2)$ & $\lambda(\lambda-1)^2(\lambda+1)^4$\\
            \hline
        \end{tabular}
    \end{center}
    \caption{The value of $\delta(nP)$ for $N=8$.}
    \label{fig:8}
\end{figure}

\subsubsection*{$N=9$}

We use the Tate normal form \eqref{eq:tate} with convention
\eqref{eq:suth} $r=\lambda^2+\lambda+1$ and $s=\lambda+1$ yielding an
elliptic curve over $k(\lambda)$ with an order $9$ point $P=(0,0)$ and
discriminant
$$\Delta(\lambda)=\lambda^9(\lambda+1)^9(\lambda^2+\lambda+1)^3(\lambda^3-3\lambda^2-6\lambda-1).$$

\begin{figure}[h]
    \begin{center}
        \begin{tabular}{|c|c|c|}
            \hline
            $nP$ & $(x_{nP},y_{nP})$ & $\delta(nP)$\\
            \hline
            $P$  & $(0,0)$ & $\lambda^8(\lambda+1)^5(\lambda^2+\lambda+1)^6$\\
            $2P$ & $(\lambda(\lambda+1)^2(\lambda^2+\lambda+1),\lambda^2(\lambda+1)^4(\lambda^2+\lambda+1))$ & $\lambda^7(\lambda+1)^{10}(\lambda^2+\lambda+1)^3$\\
            $3P$ & $(\lambda(\lambda+1)^2,\lambda^2(\lambda+1)^3)$ & $-\lambda^6(\lambda+1)^6$\\
            $4P$ & $(\lambda(\lambda+1)(\lambda^2+\lambda+1),\lambda^2(\lambda+1)(\lambda^2+\lambda+1)^2)$ & $\lambda^5(\lambda+1)^2(\lambda^2+\lambda+1)^6$\\
            $5P$ & $(\lambda(\lambda+1)(\lambda^2+\lambda+1),\lambda^2(\lambda+1)^2(\lambda^2+\lambda+1))$ & $-\lambda^4(\lambda+1)^7(\lambda^2+\lambda+1)^3$\\
            $6P$ & $(\lambda(\lambda+1)^2,\lambda^2(\lambda+1)^4)$ & $\lambda^{3}(\lambda+1)^3$\\
            $7P$ & $(\lambda(\lambda+1)^2(\lambda^2+\lambda+1),0)$ & $-\lambda^2(\lambda+1)^8(\lambda^2+\lambda+1)^6$\\
            $8P$ & $(0,\lambda(\lambda+1)^2(\lambda^2+\lambda+1))$ & $\lambda(\lambda+1)^4(\lambda^2+\lambda+1)^3$\\
            \hline
        \end{tabular}
    \end{center}
    \caption{The values of $\delta(nP)$ for $N=9$.}
    \label{fig:9}
\end{figure}

\subsubsection*{$N=10$}

We use the Tate normal form \eqref{eq:tate} with convention
\eqref{eq:suth} $r=-(\lambda+1)^2/(\lambda^2-\lambda-1)$ and $s=\lambda+1$ yielding an
elliptic curve over $k(\lambda)$ with an order $10$ point $P=(0,0)$ and
discriminant
$$\Delta(\lambda)=\lambda^{10}(\lambda+1)^{10}(2\lambda+1)^5(4\lambda^2+6\lambda+1)/(\lambda^2-\lambda-1)^{10}.$$

\begin{figure}[h]
    \begin{center}
        \begin{tabular}{|c|c|}
            \hline
            $nP$ & $\delta(nP)$\\
            \hline
            $P$ &
            $\lambda^9(\lambda+1)(2\lambda+1)^8(\lambda^2-\lambda-1)^5$\\
            $2P$ & $\lambda^8(\lambda+1)^2(2\lambda+1)^6$\\
            $3P$ & $\lambda^7(\lambda+1)^3(2\lambda+1)^4(\lambda^2-\lambda-1)^5$\\
            $4P$ & $\lambda^6(\lambda+1)^4(2\lambda+1)^2$\\
            $5P$ & ${\lambda^5(\lambda+1)^5}{(\lambda^2-\lambda-1)^5}$\\
            $6P$ & $\lambda^4(\lambda+1)^6(2\lambda+1)^8$\\
            $7P$ & ${\lambda^3(\lambda+1)^7(2\lambda+1)^6}{(\lambda^2-\lambda-1)^5}$\\
            $8P$ & $\lambda^2(\lambda+1)^8(2\lambda+1)^4$\\
            $9P$ & ${\lambda(\lambda+1)^9(2\lambda+1)^2}{(\lambda^2-\lambda-1)^5}$\\
            \hline
        \end{tabular}
    \end{center}
    \caption{The values of $\delta(nP)$ for $N=10$.}
    \label{fig:10}
\end{figure}

\subsubsection*{$N=12$}

We use the Tate normal form \eqref{eq:tate} with convention
\eqref{eq:suth} $r=(2\lambda^2-2\lambda+1)/\lambda$ and
$s=(3\lambda^2-3\lambda+1) / \lambda^2$ yielding an
elliptic curve over $k(\lambda)$ with an order $12$ point $P=(0,0)$ and
discriminant
$$\Delta(\lambda)=\lambda^{-24}(\lambda-1)^{12}(2\lambda-1)^6(3\lambda^2-3\lambda+1)^4(2\lambda^2-2\lambda+1)^3(6\lambda^2-6\lambda+1).$$

\begin{figure}[h]
    \begin{center}
        \begin{tabular}{|c|c|}
            \hline
            $nP$ & $\delta(nP)$\\
            \hline
            $P$&
            $-\lambda^{11}(\lambda-1)^{11}(2\lambda-1)^{10}(2\lambda^2-2\lambda+1)^8(3\lambda^2-3\lambda+1)^9$\\
            $2P$ & $\lambda^{10}(\lambda-1)^{10}(2\lambda-1)^8(2\lambda^2-2\lambda+1)^4(3\lambda^2-3\lambda+1)^6$\\
            $3P$ &
            $-\lambda^9(\lambda-1)^9(2\lambda-1)^6(3\lambda^2-3\lambda+1)^3$\\
            $4P$ &
            $\lambda^8(\lambda-1)^8(2\lambda-1)^4(2\lambda^2-2\lambda+1)^8$\\
            $5P$ &
            $-\lambda^7(\lambda-1)^{7}(2\lambda-1)^{2}(2\lambda^2-2\lambda+1)^{4}(3\lambda^2-3\lambda+1)^{9}$\\
            $6P$ &
            $\lambda^6(\lambda-1)^6(3\lambda^2-3\lambda+1)^6$\\
            $7P$ &
            $-\lambda^5(\lambda-1)^{5}(2\lambda-1)^{10}(2\lambda^2-2\lambda+1)^8(3\lambda^2-3\lambda+1)^{3}$\\
            $8P$ &
            $\lambda^4(\lambda-1)^4(2\lambda-1)^8(2\lambda^2-2\lambda+1)^4$\\
            $9P$ &
            $-\lambda^3(\lambda-1)^3(2\lambda-1)^6(3\lambda^2-3\lambda+1)^9$\\
            $10P$ &
            $\lambda^2(\lambda-1)^2(2\lambda-1)^4(2\lambda^2-2\lambda+1)^8(3\lambda^2-3\lambda+1)^6$\\
            $11P$ &
            $-\lambda(\lambda-1)(2\lambda-1)^2(2\lambda^2-2\lambda+1)^4(3\lambda^2-3\lambda+1)^3$\\
            \hline
        \end{tabular}
    \end{center}
    \caption{The values of $\delta(nP)$ for $N=12$.}
    \label{fig:12}
\end{figure}

\small
\bibliographystyle{amsplain}
\bibliography{g1c}

\providecommand{\bysame}{\leavevmode\hbox to3em{\hrulefill}\thinspace}
\providecommand{\MR}{\relax\ifhmode\unskip\space\fi MR }
\providecommand{\MRhref}[2]{%
  \href{http://www.ams.org/mathscinet-getitem?mr=#1}{#2}
}
\providecommand{\href}[2]{#2}
\begin{thebibliography}{10}

\bibitem{albert}
A.~Adrian Albert, \emph{On normal {K}ummer fields over a non-modular field},
  Trans. Amer. Math. Soc. \textbf{36} (1934), no.~4, 885--892. \MR{1501774}

\bibitem{artin:Brauer-Severi}
Michael Artin, \emph{Brauer-{S}everi varieties}, Brauer groups in ring theory
  and algebraic geometry ({W}ilrijk, 1981), Lecture Notes in Math., vol. 917,
  Springer, Berlin, 1982, pp.~194--210.

\bibitem{auel:banff_talk}
Asher Auel, \emph{Algebras of composite degree split by genus one curves},
  \url{http://www.birs.ca/events/2015/5-day-workshops/15w5016/videos/watch/201509151021-Auel.html},
  2015.

\bibitem{ketura}
Asher Auel, Eric Brussel, Skip Garibaldi, and Uzi Vishne, \emph{Open problems
  in central simple algebras}, Transformation Groups \textbf{16} (2011), no.~1,
  219--264.

\bibitem{auslander-goldman}
Maurice Auslander and Oscar Goldman, \emph{The {B}rauer group of a commutative
  ring}, Trans. Amer. Math. Soc. \textbf{97} (1960), 367--409. \MR{121392}

\bibitem{berthelot-breen-messing}
Pierre Berthelot, Lawrence Breen, and William Messing, \emph{Th\'{e}orie de
  {D}ieudonn\'{e} cristalline. {II}}, Lecture Notes in Mathematics, vol. 930,
  Springer-Verlag, Berlin, 1982. \MR{667344}

\bibitem{bombieri-mumford-2}
Enrico Bombieri and David Mumford, \emph{Enriques' classification of surfaces
  in char. {$p$}. {II}}, Complex analysis and algebraic geometry, a collection
  of papers dedicated to {K.} {K}odaira ({W. L. Jr} Baily and T.~Shioda, eds.),
  Iwanami Shoten, Tokyo, 1977, pp.~23--42. \MR{0491719}

\bibitem{blr-neron}
Siegfried Bosch, Werner L\"{u}tkebohmert, and Michel Raynaud, \emph{N\'{e}ron
  models}, Ergebnisse der Mathematik und ihrer Grenzgebiete (3), vol.~21,
  Springer-Verlag, Berlin, 1990. \MR{1045822}

\bibitem{magma}
Wieb Bosma, John Cannon, and Catherine Playoust, \emph{The {M}agma algebra
  system. {I}. {T}he user language}, J. Symbolic Comput. \textbf{24} (1997),
  no.~3-4, 235--265, Computational algebra and number theory (London, 1993).

\bibitem{brauer_hasse_noether}
Richard Brauer, Helmut Hasse, and Emme Noether, \emph{Beweis eines
  {H}auptsatzesin der {T}heorie der {A}lgebren}, J. Reine Angew. Math.
  \textbf{167} (1932), 399--404.

\bibitem{ciperiani-krashen}
Mirela Ciperiani and Daniel Krashen, \emph{Relative {B}rauer groups of genus 1
  curves}, Israel J. Math. \textbf{192} (2012), no.~2, 921--949. \MR{3009747}

\bibitem{clark-wc1}
Pete~L. Clark, \emph{The period-index problem in {WC}-groups. {I}. {E}lliptic
  curves}, J. Number Theory \textbf{114} (2005), no.~1, 193--208. \MR{2163913}

\bibitem{clark:open}
\bysame, \emph{Some open problems}, 2008,
  \url{https://web.archive.org/web/20130801040810/http://math.uga.edu/~pete/openquestions.html}.

\bibitem{clark-sharif}
Pete~L. Clark and Shahed Sharif, \emph{Period, index and potential. {III}},
  Algebra Number Theory \textbf{4} (2010), no.~2, 151--174. \MR{2592017}

\bibitem{dejong}
Aise~Johan de~Jong, \emph{A result of gabber},
  \url{http://math.columbia.edu/~dejong/papers/2-gabber.pdf}, 2003.

\bibitem{dejong-ho}
Aise~Johan de~Jong and Wei Ho, \emph{Genus one curves and {B}rauer-{S}everi
  varieties}, Math. Res. Lett. \textbf{19} (2012), no.~6, 1357--1359.
  \MR{3091612}

\bibitem{deuring}
Max Deuring, \emph{Die {T}ypen der {M}ultiplikatorenringe elliptischer
  {F}unktionenk\"{o}rper}, Abh. Math. Sem. Hansischen Univ. \textbf{14} (1941),
  197--272. \MR{5125}

\bibitem{elman_karpenko_merkurjev}
Richard Elman, Nikita Karpenko, and Alexander Merkurjev, \emph{The algebraic
  and geometric theory of quadratic forms}, American Mathematical Society
  Colloquium Publications, vol.~56, American Mathematical Society, Providence,
  RI, 2008.

\bibitem{fisher}
Tom Fisher, \emph{Some examples of 5 and 7 descent for elliptic curves over
  {$\bf Q$}}, J. Eur. Math. Soc. (JEMS) \textbf{3} (2001), no.~2, 169--201.
  \MR{1831874}

\bibitem{gille-szamuely}
Philippe Gille and Tam\'{a}s Szamuely, \emph{Central simple algebras and
  {G}alois cohomology}, Cambridge Studies in Advanced Mathematics, vol. 101,
  Cambridge University Press, Cambridge, 2006. \MR{2266528}

\bibitem{han}
Ilseop Han, \emph{Relative {B}rauer groups of function fields of curves of
  genus one}, Communications in Algebra \textbf{31} (2003), no.~9, 4301--4328.

\bibitem{hasse:reciprocity}
Helmut Hasse, \emph{Die {S}truktur der {R.} {B}rauerschen
  {A}lgebrenklassen-gruppe {\"u}ber einem algebraischen {Z}ahlk{\"o}rper.
  {I}nsbesondere begr{\"u}ndungdes {N}ormenrestsymbols und die {H}erleitung des
  {R}eziprozit{\"a}tsgesetzes mitnichtkommutativen {H}ilfsmitteln}, Math.
  Annalen \textbf{107} (1933), 731--760.

\bibitem{ho-lieblich}
Wei Ho and Max Lieblich, \emph{Splitting {B}rauer classes using the universal
  {A}lbanese}, to appear in {L'E}nseignement {M}ath{\'e}matique,
  arXiv:1805.12566, 2018.

\bibitem{honda}
Taira Honda, \emph{Isogeny classes of abelian varieties over finite fields}, J.
  Math. Soc. Japan \textbf{20} (1968), 83--95. \MR{229642}

\bibitem{huybrechts:Fourier-Mukai}
Daniel Huybrechts, \emph{Fourier-{M}ukai transforms in algebraic geometry},
  Oxford Mathematical Monographs, The Clarendon Press Oxford University Press,
  Oxford, 2006.

\bibitem{katz-mazur}
Nicholas~M. Katz and Barry Mazur, \emph{Arithmetic moduli of elliptic curves},
  Annals of Mathematics Studies, vol. 108, Princeton University Press,
  Princeton, NJ, 1985. \MR{772569}

\bibitem{knapp}
Anthony Knapp, \emph{Elliptic curves}, Mathematical Notes, vol.~40, Princeton
  University Press, 1992.

\bibitem{kozuma}
Rintaro Kozuma, \emph{A note on elliptic curves with a rational 3-torsion
  point}, Rocky Mountain J. Math. \textbf{40} (2010), no.~4, 1227--1255.
  \MR{2718812}

\bibitem{kramer}
Kenneth Kramer, \emph{Two-descent for elliptic curves in characteristic two},
  Trans. Amer. Math. Soc. \textbf{232} (1977), 279--295. \MR{441977}

\bibitem{krashen-lieblich}
Daniel Krashen and Max Lieblich, \emph{Index reduction for {B}rauer classes via
  stable sheaves}, Int. Math. Res. Not. IMRN (2008), no.~8, Art. ID rnn010, 31.
  \MR{2428144}

\bibitem{kubert}
Daniel~Sion Kubert, \emph{Universal bounds on the torsion of elliptic curves},
  Proc. London Math. Soc. (3) \textbf{33} (1976), no.~2, 193--237. \MR{434947}

\bibitem{lichtenbaum}
Stephen Lichtenbaum, \emph{The period-index problem for elliptic curves}, Amer.
  J. Math. \textbf{90} (1968), 1209--1223. \MR{237506}

\bibitem{lieblich-surface}
Max Lieblich, \emph{Period and index in the {B}rauer group of an arithmetic
  surface}, J. Reine Angew. Math. \textbf{659} (2011), 1--41, With an appendix
  by Daniel Krashen. \MR{2837009}

\bibitem{lmfdb}
The {LMFDB Collaboration}, \emph{The {L}-functions and modular forms database},
  \url{http://www.lmfdb.org}, 2021.

\bibitem{matsumura}
Hideyuki Matsumura, \emph{Commutative ring theory}, second ed., Cambridge
  Studies in Advanced Mathematics, vol.~8, Cambridge University Press,
  Cambridge, 1989, Translated from the Japanese by M. Reid. \MR{1011461}

\bibitem{matzri}
Eliyahu Matzri, \emph{All dihedral division algebras of degree five are
  cyclic}, Proc. Amer. Math. Soc. \textbf{136} (2008), no.~6, 1925--1931.
  \MR{2383498}

\bibitem{mazur}
Barry Mazur, \emph{Modular curves and the {E}isenstein ideal}, Inst. Hautes
  \'{E}tudes Sci. Publ. Math. (1977), no.~47, 33--186 (1978), With an appendix
  by Mazur and M. Rapoport. \MR{488287}

\bibitem{merkurjev-norm}
Alexander~S. Merkurjev, \emph{On the norm residue symbol of degree {$2$}},
  Dokl. Akad. Nauk SSSR \textbf{261} (1981), no.~3, 542--547. \MR{638926}

\bibitem{merkurjev-fields}
\bysame, \emph{Brauer groups of fields}, Comm. Algebra \textbf{11} (1983),
  no.~22, 2611--2624. \MR{733345}

\bibitem{merkurjev-suslin}
Alexander~S. Merkurjev and Andrei~A. Suslin, \emph{{$K$}-cohomology of
  {S}everi-{B}rauer varieties and the norm residue homomorphism}, Izv. Akad.
  Nauk SSSR Ser. Mat. \textbf{46} (1982), no.~5, 1011--1046, 1135--1136.
  \MR{675529}

\bibitem{milne-wc}
James~S. Milne, \emph{Weil-{C}h\^{a}telet groups over local fields}, Ann. Sci.
  \'{E}cole Norm. Sup. (4) \textbf{3} (1970), 273--284. \MR{276249}

\bibitem{milne-wc-addendum}
\bysame, \emph{Addendum: ``{W}eil-{C}h\^{a}telet groups over local fields''
  ({A}nn. {S}ci. \'{E}cole {N}orm. {S}up. (4) {\bf 3} (1970), 273--284)}, Ann.
  Sci. \'{E}cole Norm. Sup. (4) \textbf{5} (1972), 261--264. \MR{327779}

\bibitem{minac-wadsworth}
Jan Min\'{a}\v{c} and Adrian Wadsworth, \emph{Division algebras of prime degree
  and maximal {G}alois {$p$}-extensions}, Canad. J. Math. \textbf{59} (2007),
  no.~3, 658--672. \MR{2319163}

\bibitem{mumford-abelian-1}
David Mumford, \emph{On the equations defining abelian varieties. {I}}, Invent.
  Math. \textbf{1} (1966), 287--354. \MR{204427}

\bibitem{mumford-abelian}
\bysame, \emph{Abelian varieties}, Tata Institute of Fundamental Research
  Studies in Mathematics, No. 5, Published for the Tata Institute of
  Fundamental Research, Bombay; Oxford University Press, London, 1970.
  \MR{0282985}

\bibitem{ogg}
Andrew~P. Ogg, \emph{Rational points of finite order on elliptic curves},
  Invent. Math. \textbf{12} (1971), 105--111. \MR{291084}

\bibitem{oneil-jacobians}
Catherine O'Neil, \emph{Jacobians of genus one curves}, Math. Res. Lett.
  \textbf{8} (2001), no.~1-2, 125--140. \MR{1825265}

\bibitem{oneil}
\bysame, \emph{The period-index obstruction for elliptic curves}, J. Number
  Theory \textbf{95} (2002), no.~2, 329--339. \MR{1924106}

\bibitem{oneil:erratum}
\bysame, \emph{Erratum to “{T}he period-index obstruction for elliptic
  curves" [{J. Number Theory 95} (2002) 329–339]}, Journal of Number Theory
  \textbf{109} (2004), no.~2, 390.

\bibitem{orlov_vishik_voevodsky}
Dimitri Orlov, Alexander Vishik, and Vladimir Voevodsky, \emph{An exact
  sequence for {$K^M_\ast/2$} with applications to quadratic forms}, Ann. of
  Math. (2) \textbf{165} (2007), no.~1, 1--13.

\bibitem{pierce}
Richard~S. Pierce, \emph{Associative algebras}, Graduate Texts in Mathematics,
  vol.~88, Springer-Verlag, New York-Berlin, 1982. \MR{674652}

\bibitem{poonen_schaefer_stoll}
Bjorn {Poonen}, Edward~F. {Schaefer}, and Michael {Stoll}, \emph{Twists of
  {$X(7)$} and primitive solutions to {$x^2+y^3=z^7$}}, Duke Mathematical
  Journal \textbf{137} (2007), no.~1, 103--158.

\bibitem{roquette}
Peter Roquette, \emph{Splitting of algebras by function fields of one
  variable}, Nagoya Math. J. \textbf{27} (1966), 625--642. \MR{201435}

\bibitem{rage:problems}
Anthony Ruozzi and Uzi Vishne, \emph{Open problem session from the conference
  ``{R}amification in {A}lgebra and {G}eometry at {E}mory''},
  \url{http://www.mathcs.emory.edu/RAGE/RAGE-open-problems.pdf}, 2011.

\bibitem{saltman:cubic}
David~J. Saltman, \emph{Genus 1 curves in {S}everi--{B}rauer surfaces},
  arXiv:2105.09986, 2021.

\bibitem{sharif}
Shahed Sharif, \emph{Period and index of genus one curves over global fields},
  Math. Ann. \textbf{354} (2012), no.~3, 1029--1047. \MR{2983078}

\bibitem{shatz}
Stephen~S. Shatz, \emph{The cohomology of certain elliptic curves over local
  and quasi-local fields}, Illinois J. Math. \textbf{11} (1967), 234--241.
  \MR{215848}

\bibitem{silverman-advanced}
Joseph~H. Silverman, \emph{Advanced topics in the arithmetic of elliptic
  curves}, Graduate Texts in Mathematics, vol. 151, Springer-Verlag, New York,
  1994. \MR{1312368}

\bibitem{silverman}
\bysame, \emph{The arithmetic of elliptic curves}, second ed., Graduate Texts
  in Mathematics, vol. 106, Springer, Dordrecht, 2009. \MR{2514094}

\bibitem{sutherland:table}
Andrew~V. Sutherland, \emph{Optimized equations for {$X_1(N)$}},
  \url{https://math.mit.edu/~drew/X1_curves.txt}.

\bibitem{sutherland}
\bysame, \emph{Constructing elliptic curves over finite fields with prescribed
  torsion}, Math. Comp. \textbf{81} (2012), no.~278, 1131--1147. \MR{2869053}

\bibitem{swets}
Paul~Kenneth Swets, \emph{Global sections of higher powers of the twisting
  sheaf on a {B}rauer-{S}everi variety}, ProQuest LLC, Ann Arbor, MI, 1995,
  Thesis (Ph.D.)--The University of Texas at Austin. \MR{2693834}

\bibitem{tate-wc}
John Tate, \emph{{$WC$}-groups over {$p$}-adic fields}, S\'{e}minaire Bourbaki;
  10e ann\'{e}e: 1957/1958. Textes des conf\'{e}rences; Expos\'{e}s 152 \`a
  168; 2e \'{e}d. corrig\'{e}e, Expos\'{e} 156, vol.~13, Secr\'{e}tariat
  math\'{e}matique, Paris, 1958. \MR{0105420}

\bibitem{tate-injective}
\bysame, \emph{Endomorphisms of abelian varieties over finite fields}, Invent.
  Math. \textbf{2} (1966), 134--144. \MR{206004}

\bibitem{tate-global}
\bysame, \emph{Global class field theory}, Algebraic {N}umber {T}heory ({P}roc.
  {I}nstructional {C}onf., {B}righton, 1965), Thompson, Washington, D.C., 1967,
  pp.~162--203. \MR{0220697}

\bibitem{tate-surjective}
\bysame, \emph{Classes d'isog\'{e}nie des vari\'{e}t\'{e}s ab\'{e}liennes sur
  un corps fini (d'apr\`es {T}. {H}onda)}, S\'{e}minaire {B}ourbaki. {V}ol.
  1968/69: {E}xpos\'{e}s 347--363, Lecture Notes in Math., vol. 175, Springer,
  Berlin, 1971, pp.~Exp. No. 352, 95--110. \MR{3077121}

\bibitem{tate}
\bysame, \emph{A review of non-{A}rchimedean elliptic functions}, Elliptic
  curves, modular forms, \& {F}ermat's last theorem ({H}ong {K}ong, 1993), Ser.
  Number Theory, I, Int. Press, Cambridge, MA, 1995, pp.~162--184. \MR{1363501}

\bibitem{vishne}
Uzi Vishne, \emph{Galois cohomology of fields without roots of unity}, J.
  Algebra \textbf{279} (2004), no.~2, 451--492. \MR{2078127}

\bibitem{voevodsky:Milnor_conjecture_I}
Vladimir Voevodsky, \emph{Motivic cohomology with {${\bf Z}/2$}-coefficients},
  Publ. Math. Inst. Hautes \'Etudes Sci. (2003), no.~98, 59--104.

\bibitem{voevodsky:Milnor_conjecture_II}
\bysame, \emph{Reduced power operations in motivic cohomology}, Publ. Math.
  Inst. Hautes \'Etudes Sci. (2003), no.~98, 1--57. \MR{2031198 (2005b:14038a)}

\bibitem{waterhouse}
William~C. Waterhouse, \emph{Abelian varieties over finite fields}, Ann. Sci.
  \'{E}cole Norm. Sup. (4) \textbf{2} (1969), 521--560. \MR{265369}

\bibitem{weil:basic_number_theory}
Andr{\'e} Weil, \emph{Basic number theory}, Springer, Berlin, Heidelberg, 1973.

\bibitem{zarhin}
Yuri~G. Zarhin, \emph{Noncommutative cohomology and {M}umford groups}, Mat.
  Zametki \textbf{15} (1974), 415--419. \MR{354612}

\end{thebibliography}

\vspace{20pt}
\scriptsize
\noindent
Benjamin Antieau\\
Department of Mathematics\\
Northwestern University\\
2033 Sheridan Road\\
Evanston, IL 60208\\
\texttt{antieau@northwestern.edu}

\vspace{20pt}
\noindent
Asher Auel\\
Department of Mathematics\\
Dartmouth College\\
6188 Kemeny Hall\\
Hanover, NH 03755\\
\texttt{asher.auel@dartmouth.edu}

\end{document}